\newcommand\footnotetext@\relax
\let\footnotetext@\@footnotetext
\newcommand{\MSC}[2][2020]{%
 \unskip\protected@xdef\@thefnmark{}%
 \protect\footnotetext@{\kern-1.8em{\itshape MSC#1\spacefactor3000:}\/ #2}}
\newcommand{\keywords}[1]{%
 \unskip\protected@xdef\@thefnmark{}%
 \protect\footnotetext@{\kern-1.8em{\itshape Keywords\spacefactor3000:}\/ #1}}
\newcommand{\address}[1]{%
 \unskip\footnotemark
 \protected@xdef\@thanks{\@thanks\protect\footnotetext[\the\c@footnote]{{\itshape Address\spacefactor3000:}\/ #1}}}
\newcommand{\email}[1]{%
 \unskip\protected@xdef\@thanks{\@thanks\protect\footnotetext[0]{{\itshape Email\spacefactor3000:}\/ \texttt{#1}}}}
\renewcommand{\thanks}[1]{%
 \unskip\protected@xdef\@thanks{\@thanks\protect\footnotetext[0]{#1}}}
\gdef\@date{}
 \theoremstyle{plain}
  \newtheorem{thm}{Theorem}
  \newtheorem{cor}[thm]{Corollary}
  \newtheorem{lem}[thm]{Lemma}
  \newtheorem{prop}[thm]{Proposition}
 \theoremstyle{definition}
  \newtheorem{defn}[thm]{Definition}
  \newtheorem{exm}[thm]{Example}
  \newtheorem{exms}[thm]{Examples}
  \newtheorem*{exm*}{Example}
  \newtheorem*{exms*}{Examples}
  \newtheorem*{LEP}{Local Extension Problem}
  \newtheorem*{LGP}{Local-to-Global Problem}
 \theoremstyle{remark}
  \newtheorem{rmks}[thm]{Remarks}
  \newtheorem*{rmk*}{Remark}
  \newtheorem*{rmks*}{Remarks}
  \newtheorem*{cmt*}{Comment}
  \newtheorem*{cmts*}{Comments}
\newcommand{\append}[3][]{(#3)#2{\,\vphantom{#1}}\mkern+2mu}
\DeclareMathOperator{\Aut}{Aut}
\newcommand{\avg}{\hat}
\newcommand{\Ban}{\boldsymbol}
\newcommand{\BB}{\Cat{BB}}
\DeclareMathOperator{\Bid}{Bid}
\newcommand{\blank}{\hphantom{b}}
\newcommand{\C}{\mathbb{C}}
\newcommand{\Cat}{\mathsf}
\newcommand{\cat}{\mathcal}
\newcommand{\clos}{\overline}
\newcommand{\cnj}{\overline}
\newcommand{\der}{\mathit{d}}
\DeclareMathOperator{\End}{End}
\newcommand{\ftimes}[2]{\mathbin{_{#1}\times_{#2}}}
\DeclareMathOperator{\GI}{I}
\DeclareMathOperator{\GL}{GL}
\DeclareMathOperator{\GU}{U}
\newcommand{\HB}{\Cat{HB}}
\newcommand{\Hil}{\Cat{Hil}}
\DeclareMathOperator{\Hom}{Hom}
\newcommand{\hto}{\Rightarrow}
\newcommand{\id}{\mathinner{\mathrm{id}}}
\def\iintegral#1\der#2\der{\iint#1\mathclose{\thinspace}\der#2\thinspace\der}
\def\integral#1\der{\int#1\mathclose{\thinspace}\der}
\DeclareMathOperator{\Iso}{Iso}
\newcommand{\justify}[1]{\mathrel{\phantom=}#1\mathopen{\mkern\medmuskip}}
\renewcommand{\L}{\operatorname{L}}
\newcommand{\longfrom}{\longleftarrow}
\newcommand{\longmapsfrom}{\longleftarrow\mapsfromchar}
\newcommand{\longto}{\longrightarrow}
\DeclareSymbolFont{stmry}{U}{stmry}{m}{n}
\DeclareMathSymbol{\mapsfromchar}{\mathrel}{stmry}{"5B}
\newcommand{\pr}{\mathinner{\mathrm{pr}}}
\newcommand{\R}{\mathbb{R}}
\newcommand{\Rep}{\Cat{Rep}}
\newcommand{\simto}{\overset\sim\to}
\DeclareMathOperator{\supp}{supp}
\DeclareMathOperator{\stab}{stab}
\newcommand{\tto}{\rightrightarrows}
\newcommand{\VSp}{\Cat{Vec}}
\newcommand{\xfrom}{\xleftarrow}
\newcommand{\xmapsfrom}[1]{\xfrom{#1}\mapsfromchar}
\newcommand{\xmapsto}[1]{\mapstochar\xto{#1}}
\newcommand{\xto}{\xrightarrow}
\begin{document}

\title{Almost Representations of Groupoids on Banach Bundles%
  \MSC{Primary 22A22, Secondary 22A30, 28C10}
  \keywords{Locally compact groupoid, Banach bundle, continuous representation, %
      Peter–Weyl theorem, Tannaka duality}
}%
\author{Giorgio Trentinaglia%
  \address{Centro de Análise Matemática, Geometria e Sistemas Dinâmicos, %
      Ins\-ti\-tu\-to Su\-pe\-ri\-or Téc\-ni\-co, University of Lisbon, %
      Av.~Ro\-vis\-co Pais, 1049-001 Lisbon, Portugal}
  \email{gtrentin@math.tecnico.ulisboa.pt}
  \thanks{The author acknowledges the support %
      of the Portuguese Foundation for Science and Technology %
      through grants SFRH/BPD/81810/2011 and UID/MAT/04459/2020.}
}%
\maketitle

\begin{abstract}
We extend an old result of de~la~Harpe and Karoubi, concerning almost representations of compact groups, to proper groupoids admitting continuous Haar measure systems. As an application, we establish the existence of sufficiently many continuous representations of such groupoids on finite-rank Hilbert bundles locally, and use this fact to prove a new generalization of the classical Tannaka duality theorem to groupoids in a purely topological setting.
\end{abstract}

\section{Introduction}\label{sec:intro}

The Peter–Weyl and the Tannaka duality theorems are cornerstones in the representation theory of compact groups \cite{BtD85,JS91}. When one attempts to generalize these classical results to groupoids, one runs into notorious difficulties, among which the problem of determining whether the \emph{representative functions}---the “matrix coefficients” of finite-dimensional continuous representations---are enough to separate points has remained unsettled for nearly two decades \cite{Ami07,Bos11} or arguably even longer \cite[pp.~152–170]{Sed74}. Allowing ourselves a bit of heuristic license, we may break the latter problem into the following two questions:

\begin{LEP} Let $x$ be a base point of a topological groupoid $G$: under what general circumstances does a continuous representation of the stabilizer group of $x$ extend continuously to a local representation of $G$ defined in an open neighborhood of $x$? \end{LEP}

\begin{LGP} Given any local representation of $G$ operating continuously in an open neighborhood of $x$, when does it agree (in a possibly smaller neighborhood) with the restriction of some continuous representation of the whole $G$? \end{LGP}

Let us briefly review what is known about each one of these two problems, beginning with the second.

The answer to the local-to-global problem is rather sensitive to what precise meaning one attaches to the term “continuous representation”. As a matter of principle, just as groups act linearly on vector spaces, so should groupoids operate by fiberwise linear transformations on vector bundles: but what kind of vector bundles, exactly? Now, examples indicate that representative functions arising from continuous representations of compact groupoids on locally trivial vector bundles of locally finite rank are generally too few to separate points \cite{JM15,LO01,Mrc18,2008b}. The notion of election is thus ruled out and we are faced with the dilemma of deciding which of the defining conditions should be relaxed. With an eye to keeping as close as possible to the traditional formulation of the classical theorems mentioned at the beginning, giving up the requirement of local triviality seems to be the most sensible option. (Surprising as it may sound, there are reasons of not only \emph{global} but also \emph{local} nature for doing away with that requirement, some of which are discussed in the appendix to this paper; a lesson to be learned is that it is wrong to impose “transversal” rigidity conditions on the objects groupoids are supposed to act on.) Fell's concept of \emph{Banach bundle} \cite{Fel77} proves to be an excellent candidate for the notion of a vector bundle that is not locally trivial but is still, in a suitable sense, “continuous”; as we shall see, working with Banach bundles removes any obstruction to solving the local-to-global problem, yet retains enough topological information for it to be still meaningful to think about generalizing the aforesaid theorems.

Even after deciding to work with continuous representations of $G$ on Banach bundles (or with an appropriate selection thereof), we are still faced with the local extension problem. In the literature the latter problem is commonly sidestepped either by postulating without further analysis as a working hypothesis on $G$ that the desired extensions always exist \cite{Ami07,Bos11} or by assuming even more restrictively that $G$ is an “orbispace”, i.e., as a topological stack, it is locally equivalent in the neighborhood of each point to the quotient of some locally compact Hausdorff space by a continuous action of some compact group. This state of affairs is quite unsatisfactory. When $G$ is proper and has a \emph{differentiable} rather than just topological structure and so is a \emph{Lie groupoid}, it must be an orbispace in this sense \cite{Wei00,Zun06}; however, in the purely topological setting of locally compact groupoids, which are conceivably as far removed from being differentiable as to be totally disconnected, this may no longer be the case even when $G$ arises from a proper continuous action of a noncompact locally compact group. To our knowledge, no general solution to the local extension problem has been provided to date for such $G$, or for those “continuous bundles” of compact groups which are not locally trivial, just to mention a couple of basic examples.

The principal contribution of the present paper is a complete (and positive) solution to the local extension problem under no other hypothesis on $G$ besides that it be a \emph{proper} (\emph{locally compact}, \emph{topological}) \emph{groupoid}; here, the concept of properness includes the existence of continuous normalized Haar measure systems as a built-in feature. Our solution takes the form of a \emph{local extension theorem} for isotropy representations of proper groupoids, stated as Theorem \ref{thm:local} on page \pageref{thm:local}. This result, whose proof takes up much of section \ref{sec:local}, is a corollary of what we consider to be the second most significant contribution of our paper, an \emph{almost representation theorem} for pseudorepresentations of proper groupoids on Banach bundles, which is enunciated at the beginning of section \ref{sec:almost} on page \pageref{thm:almost} as Theorem \ref{thm:almost} after a self-contained review of the basic notions involved in the formulation of both theorems which is the subject of section \ref{sec:representations}. Loosely speaking, our almost representation theorem says that, for a proper groupoid $G$ (admitting continuous Haar measure systems), any “almost representation” of $G$ on a Banach bundle lies “close” to an actual representation; while it can be seen as a far-reaching generalization of an old result of de~la~Harpe and Karoubi about compact groups \cite{dlHK77}, our theorem is given a completely different proof which is not only more general but also much shorter and makes our contribution of independent interest even in the case of compact groups. In section \ref{sec:tannakian}, as an application of our local extension theorem, we formulate and prove a \emph{tannakian reconstruction theorem} for general (not necessarily \emph{Lie}) proper groupoids; this is Theorem \ref{thm:tannakian} on page \pageref{thm:tannakian}, our third main contribution. In order to leave the door open for other applications, in both Theorem \ref{thm:almost} and Theorem \ref{thm:local} we deal at practically no extra cost with \emph{projective}, as well as \emph{infinite-dimensional}, representations. Finally, our discussion in section \ref{sec:images} of direct images of projective representations, which is needed in the proof of our local extension theorem, may also be of independent interest, and perhaps, even contain original aspects.

We want to conclude the present introduction with a comparative-historical perspective of the ideas involved in the demonstration of our almost representation theorem. Our use of the analytic technique of “recursive averaging” sets our proof apart from that of de~la~Harpe and Karoubi \cite{dlHK77}, who employ different methods including “holomorphic functional calculus” and the concept of regular representation with coefficients in a Banach module. Although the idea of recursive averaging is not new, its practical implementation in the present work features original aspects. A good way of illustrating their extent is to contrast our argument with the one put forward by Kazhdan in \cite{Kaz82}. Kazhdan's reasoning may be summarized (somewhat informally) as follows. If $T: G \to \GU(\Ban{E})$ is a uniformly continuous map of a compact group into the group of unitary operators in a Hilbert space, then setting \[%
	W(g,h) = T(g)T(h)T(gh)^{-1}
\] for all $g$,~$h \in G$ defines a “nonabelian multiplier” for $G$ with values in $\GU(\Ban{E})$, where $g$ “acts” on $\GU(\Ban{E})$ via conjugation by $T(g)$. Suppose $W(g,h)$ lies “close enough” to $I$, the identity operator, for all $g$ and $h$. Were $\exp$, the exponential map, a homomorphism from the additive group of all bounded skew-adjoint operators in $\Ban{E}$ to $\GU(\Ban{E})$, we could use its local inverse, $\log$, defined around $I$, to turn $W$ into a continuous \emph{additive} $2$-cocycle for $G$. Were furthermore the adjoint operation of $G$ on $\End(\Ban{E})$ via $T$ an action, rather than merely a “pseudo-action”, we could take advantage of the compactness of $G$ in the form of a standard cohomology vanishing argument in order to find a $1$-cochain on $G$ whose image under $\exp$ could then be used to “correct” $T$ so as to turn it into an actual representation. Although this heuristic reasoning has hardly any practical applicability, it is nevertheless “approximately” valid, in a sense that can be made precise, and $T$ can be “corrected” \emph{recursively} so as to produce a true representation \emph{in the limit}. Essentially the same idea is implicit in \cite{GKR74,Zun06}, with the difference that these works consider “almost homomorphisms” $T: G \to H$ with values in a compact Lie group $H$ rather than in $\GU(\Ban{E})$: this introduces an extra layer of technical complications coming from the need to get good estimates for $\exp: \mathfrak{h} \to H$, where $\mathfrak{h}$ is the Lie algebra of $H$; like us, Zung \cite{Zun06} deals more generally with the case where $G$ is a groupoid, rather than a group. Now, while it is conceivable that the theory of \cite{Kaz82} might be extended without too much difficulty to (nonunitary, nonuniformly continuous, projective) representations of groupoids on Banach bundles, we need not take the pains to do that, for a simpler alternative is available. As a matter of fact, the reader will not find any use of “$\varepsilon$-approximate” groupoid cohomology or any vestiges of $\exp$ or $\log$ in the proof of our almost representation theorem in section \ref{sec:almost}. Instead, we shall exploit the simple observation that $\GL(\Ban{E})$ already sits in a vector space, $\End(\Ban{E})$, within which it makes sense to take integral averages. In this way, we most likely attain the greatest possible concision: apart from a technical lemma that is needed anyway to deal with the nonunitary case, and apart from the unavoidable preliminaries about normalized Haar integrals, our proof at its heart boils down to a single one-page lemma. We finish with one brief comment. The use of recursive averaging techniques in the theory of groupoids was advocated by Weinstein \cite{Wei00} as a possible means of proving the linearization theorem for proper Lie groupoids. Subsequent to Zung's implementation of Weinstein's program \cite{Zun06}, the idea of recursive averaging has gradually fallen into disfavor among differential geometers, mostly because of the discovery of new proofs of the linearization theorem based on “geometric” rather than “analytic” methods. Still, as the present work indicates, the same idea remains very useful in certain contexts, such as the theory of topological groupoids, where geometric concepts do not apply.

\section{Representations of locally compact groupoids}\label{sec:representations}

The present section is aimed at nonspecialists: its purpose is to provide just enough background on locally compact groupoids and their Banach bundle representations to get started.

\paragraph*{Locally compact groupoids.} Let $G \tto X$ be a topological groupoid, with arrows $g$,~$h$,~$k \in G$, base points $x$,~$y$,~$z \in X$, continuous source and target map $s$,~$t: G \to X$, space of composable pairs $G \ftimes{s}{t} G = \{(g,h): sg = th\} \subset G \times G$, continuous composition $G \ftimes{s}{t} G \to G$, $(g,h) \mapsto gh$, and continuous inversion $G \to G$, $g \mapsto g^{-1}$. The topology of $X$ is determined uniquely by that of $G$: it is the smallest making the unit map $1: X \to G$, $x \mapsto 1x$ continuous, equivalently, the largest making the source map $s$ (or target map $t$) continuous. We write $G_x = s^{-1}(x)$ (source fiber), $G^x = t^{-1}(x)$ (target fiber), $G(x,y) = G_x \cap G^y$ (hom-space), $G(x) = G(x,x)$ (isotropy group), and $Gx = t(G_x)$ (orbit).

We assume $G \tto X$ is \emph{locally compact} in the sense that $G$, and hence $X$, is a locally compact \emph{Hausdorff} space; our terminology is consistent with \cite{Ren80}, but diverges slightly from other sources \cite{Pat99}. As in [loc.~cit.], we agree that $G \tto X$ possesses a (continuous, left-invariant) \emph{Haar measure system} $\{\mu^x$,~$x \in X\}$. This means that every $\mu^x$ is a $\sigma$-regular Borel measure (Radon measure) on $G$ and that the following three conditions are satisfied, where we write $\integral \varphi(h) \der\mu^x(h)$ or $\integral_{th=x} \varphi(h) \der h$ for the integral of a compactly supported continuous function $\varphi \in C_c(G)$: (i)~the support of $\mu^x$ is the target fiber $G^x$; (ii)~(left invariance) $\integral \varphi(gh) \der\mu^{sg}(h) = \integral \varphi(h) \der\mu^{tg}(h)$ for all $g \in G$, $\varphi \in C_c(G)$; (iii)~(continuity) the function $x \mapsto \integral \varphi(h) \der\mu^x(h)$ is continuous on $X$ for each $\varphi \in C_c(G)$. The existence of Haar measure systems, which entails the openness of the two maps $s$ and $t$, is commonly postulated to be part of the concept of “locally compact groupoid”; see \cite[§2.2]{Pat99}.

\begin{exms*} {\itshape (a)}\spacefactor3000\/ Any locally compact group $G$ with left Haar measure $\mu$ may be seen as a locally compact groupoid over the one-point space $X = \{\ast\}$.

{\itshape (b)}\spacefactor3000\/ Any locally compact group $G$ acting continuously from the right on a locally compact Hausdorff space $X$ gives rise to a corresponding \emph{transformation groupoid} $X \times G \tto X$: the action $(x,g) \mapsto xg$ functions as the source, the projection $(x,g) \mapsto x$ as the target, and composition is given by $(x,g) \cdot (xg,h) = (x,gh)$. This is a locally compact groupoid, a Haar measure system being furnished by $\mu^x = \delta_x \times \mu$, where $\delta_x$ is Dirac measure at $x$ (unit point-mass) and $\mu$ is as in Example (a). We have a similar transformation groupoid $G \times X \tto X$ associated with any left action of $G$ on $X$. It is a simple matter to check this too is a locally compact groupoid.

{\itshape (c)}\spacefactor3000\/ Let $G \tto X$ be a topological groupoid which is \emph{isotropic} in the sense that its source map and target map coincide. Suppose that $G$ is a locally compact Hausdorff space. Suppose in addition that $s = t$ is an open map and that every isotropy group $G_x = G^x = G(x)$ is compact. Let $\mu^x$ be left Haar measure on $G_x$ normalized so that $\mu^x(G_x) = 1$. Then $\{\mu^x\}$ is a (continuous) Haar measure system on $G \tto X$; a more general statement, which the reader can find either in the appendix [section \ref{sec:appendix}] or in \cite[Lem.~1.3]{Ren91}, is valid for possibly noncompact $G_x$, in which generality we refer to $G \tto X$ as a \emph{locally compact group bundle}.

{\itshape (d)}\spacefactor3000\/ A \emph{Lie groupoid} $G \tto X$ is the result of endowing each one of $G$,\ $X$ with a differentiable structure (of class $C^\infty$, finite-dimensional, Hausdorff, and paracompact) in such a way that $s$,\ $t$ become submersive, as well as differentiable, (whence $G \ftimes{s}{t} G$ becomes a submanifold of $G \times G$) and likewise do the composition and the inversion. A Lie $G \tto X$ is always locally compact in the sense we have stipulated, a Haar measure system being provided by the Riemannian volumes $\mu^x$ of target fibers under any left-invariant metric on the subbundle $\ker Tt$ of $TG$~(= the tangent bundle of $G$), such metrics being always available thanks to the existence of partitions of unity on $X$.

{\itshape (e)}\spacefactor3000\/ An \emph{étale groupoid} is a topological groupoid whose target map $t$ is a local homeomorphism. Any étale $G \tto X$ with locally compact Hausdorff $G$ is a locally compact groupoid, as can be seen by taking each $\mu^x$ to be counting measure on the discrete space $G^x$. Among all groupoids of the latter kind, many important ones, such as the Cuntz or the Wiener–Hopf groupoids \cite[§4.2]{Pat99}, happen to be both totally disconnected and nondiscrete. \end{exms*}

For a prosecution of the present discussion the reader is referred to the appendix [section \ref{sec:appendix}], where additional motivation and examples are made available.

\paragraph*{Banach bundles.} By a \emph{topological group bundle} we mean a topological groupoid whose base space is Hausdorff and whose source map is open and equal to its target map; in this context, we refer to $s = t$ as the \emph{bundle projection}, and denote it by $p$. A \emph{topological vector bundle} is an abelian topological group bundle $E \to X$ (noted additively) equipped with a continuous multiplication by scalars $\R \times E \to E$. (We work with real coefficients.) Following \cite[§10, p.~99]{Fel77}, we define a \emph{Banach bundle} over $X$ to be a Hausdorff topological vector bundle $E \to X$ equipped with a continuous norm $\lvert\blank\rvert: E \to \R_{\geq 0}$ making each fiber $E_x = p^{-1}(x)$ into a Banach space in such a way that if $\{e_i\}$ is any net of elements of $E$ for which $pe_i \to x$ in $X$ and $\lvert e_i\rvert \to 0$ then $e_i \to 0x$~(= the zero vector in $E_x$). The subspace topology on $E_x$ turns out to agree with the norm topology \cite[p.~102, Prop.~10.2]{Fel77}.

Let $\Gamma(X;E)$ be the vector space of all continuous cross-sections of the topological vector bundle $E \to X$. We say that $E \to X$ has \emph{enough cross-sections} if for each $e$ in $E$ there exists at least one element of $\Gamma(X;E)$ passing through $e$. An important result of Douady and dal~Soglio-Hérault says that if $X$ is either paracompact or locally compact then every Banach bundle over $X$ has enough cross-sections \cite[p.~103, Thm.~10.5]{Fel77}.

A \emph{morphism} of Banach (or, more generally, topological vector) bundles from $E \to X$ to $F \to Y$ is a continuous map $L: E \to F$ that covers some map $u: X \to Y$ (necessarily continuous) and is fiberwise linear. Given $x$ in $X$, we shall be writing $L_x: E_x \to F_{u(x)}$ for the bounded linear map induced between the fiber of $E$ at $x$ and the fiber of $F$ at $u(x)$. A basic example is provided by $u^*F = X \times_Y F \to X$, the \emph{pullback} of $F \to Y$ along the continuous map $u: X \to Y$; the projection $\pr: u^*F \to F$ is an isometric (i.e., norm-preserving) morphism covering $u$. There is a linear correspondence $\Gamma(Y;F) \to \Gamma(X;u^*F)$, $\eta \mapsto u^*\eta$ defined by $\pr \circ u^*\eta = \eta \circ u$. We shall be writing $\BB$ for the category of all Banach bundles, and $\BB(X)$ for the subcategory of all morphisms that cover $u = \id_X$. If $L: F \to F'$ is a morphism of Banach bundles in $\BB(Y)$, then setting $\pr \circ u^*L = L \circ \pr$ yields a morphism $u^*L: u^*F \to u^*F'$ of Banach bundles in $\BB(X)$. In this way we get a functor $\BB(Y) \to \BB(X)$.

Let $E \to X$ be a Banach bundle. Given any continuous cross-section $\zeta \in \Gamma(X;E)$, any open subset $U \subset X$, and any real number $r > 0$, we write \[%
	\mathcal{B}_r(\zeta,U) = \{e \in p^{-1}(U): \lvert e - \zeta(pe)\rvert < r\}.
\] Let $e \in E_x$ be assigned. For each $\zeta \in \Gamma(X;E)$ with $\zeta(x) = e$, the sets $\mathcal{B}_r(\zeta,U)$ with $U \ni x$ form a basis of open neighborhoods of $e$ in $E$ \cite[p.~102]{Fel77}. Let $\lVert\blank\rVert$ be the standard operator norm on the space $\L(\Ban{E},\Ban{F})$ of all bounded linear maps between two Banach spaces $\Ban{E}$, $\Ban{F}$. Although for a general morphism $L: E \to F$ the function $x \mapsto \lVert L_x\rVert$ may not be continuous, the following always holds; cf.~\cite[Lem.~2.3]{Bos11}, \cite[Prop.~2]{Sed82}.

\begin{lem}\label{lem:morphism} Let\/ $L: E \to F$ be any fiberwise linear map between Banach bundles\/ $E \to X$, $F \to Y$ which covers a continuous map\/ $u: X \to Y$. Provided\/ $X$ is either paracompact or locally compact, the following are equivalent.
\begin{enumerate}
\def\labelenumi{\upshape(\roman{enumi})}
 \item $L: E \to F$ is continuous, in other words, it is a Banach bundle morphism.
 \item $L\zeta \in \Gamma(X;u^*F)$ for all\/ $\zeta \in \Gamma(X;E)$, and the function\/ $x \mapsto \lVert L_x\rVert$ (operator norm) is locally bounded on\/ $X$, hence bounded on every compact subset of\/ $X$.
\end{enumerate} \end{lem}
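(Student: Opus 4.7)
The implication (i)$\Rightarrow$(ii) is mostly a matter of unwinding definitions. Given a continuous section $\zeta\in\Gamma(X;E)$, the composition $x\mapsto L\zeta(x)=L_x\zeta(x)$ is a continuous map $X\to F$ whose projection to $Y$ is $u$, so it corresponds to a continuous section of $u^*F$. For local boundedness of $x\mapsto\lVert L_x\rVert$ near a chosen $x_0$, I invoke continuity of $L$ at the zero vector $0x_0$: a basic neighborhood of $0u(x_0)$ in $F$ is $\mathcal{B}_\varepsilon(0,V)$, and a basic neighborhood of $0x_0$ in $E$ is $\mathcal{B}_\delta(0,U)$. Continuity yields $U\ni x_0$ with $u(U)\subset V$, and $\delta>0$, such that $|e|<\delta$, $pe\in U$ imply $|Le|<\varepsilon$; scaling inside each fiber $E_x$ then gives $\lVert L_x\rVert\le 2\varepsilon/\delta$ uniformly on $U$.

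For the converse (ii)$\Rightarrow$(i), the crucial input is the Douady–dal Soglio-Hérault theorem: under our hypotheses on $X$ (respectively $Y$), both $E$ and $F$ have enough cross-sections. Fix $e_0\in E_{x_0}$ and set $f_0=L_{x_0}e_0\in F_{u(x_0)}$. A basic open neighborhood of $f_0$ has the form $\mathcal{B}_\varepsilon(\eta,V)$ for some $\eta\in\Gamma(Y;F)$ with $\eta(u(x_0))=f_0$ and some open $V\ni u(x_0)$; pick also $\zeta\in\Gamma(X;E)$ with $\zeta(x_0)=e_0$. The plan is to produce a basic neighborhood $\mathcal{B}_\delta(\zeta,U)$ of $e_0$ whose image under $L$ sits inside $\mathcal{B}_\varepsilon(\eta,V)$.

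To do that I would estimate, for $e\in E$ with $x=pe$ close to $x_0$,
\[
    |Le-\eta(u(x))|\;\le\;\lVert L_x\rVert\,|e-\zeta(x)|\;+\;|L\zeta(x)-\eta(u(x))|.
\]
The first term is handled by local boundedness: choose $U\ni x_0$ on which $\lVert L_x\rVert\le M$ and take $\delta=\varepsilon/(2M)$. The second term is handled by hypothesis (ii): the assignment $x\mapsto L\zeta(x)-u^*\eta(x)$ is a continuous section of $u^*F$ vanishing at $x_0$, so its norm tends to $0$ as $x\to x_0$ and can be made $<\varepsilon/2$ by shrinking $U$; we may additionally require $u(U)\subset V$ by continuity of $u$. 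This delivers the desired inclusion $L(\mathcal{B}_\delta(\zeta,U))\subset\mathcal{B}_\varepsilon(\eta,V)$ and hence continuity of $L$ at $e_0$.

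The main obstacle I anticipate is bookkeeping the pullback: one has to keep in mind that $L\zeta$ is naturally a section of $u^*F$, not of $F$, and that the comparison with $\eta$ only makes sense after pulling $\eta$ back to $X$. Once that is set up, the triangle inequality above separates the two genuinely independent uses of hypothesis (ii)—sections-to-sections for the second summand, local boundedness for the first—and makes clear why both assumptions in (ii) are needed. The appeal to enough cross-sections, which is where the paracompact-or-locally-compact assumption on $X$ enters, is essential for producing the sections $\zeta$ and $\eta$ realizing the prescribed values $e_0$ and $f_0$.
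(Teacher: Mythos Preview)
Your approach is essentially the paper's, but the paper streamlines it in one way that also closes a small gap in your version. The paper first observes that $L:E\to F$ is continuous iff the induced map $E\to u^*F$ is, and thereby reduces to $X=Y$, $u=\id$. After that reduction, for (ii)$\Rightarrow$(i) it simply takes $L\zeta$ itself as the section centering the neighborhood basis at $Le_0$: since $L\zeta\in\Gamma(X;F)$ by hypothesis and $L\zeta(x_0)=Le_0$, the sets $\mathcal{B}_r(L\zeta,U)$ already form a basis at $Le_0$, and one has $\lvert Le-L\zeta(x)\rvert\le\lVert L_x\rVert\,\lvert e-\zeta(x)\rvert$ outright. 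Your second summand $\lvert L\zeta(x)-\eta(u(x))\rvert$ never appears.

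This matters because your $\eta\in\Gamma(Y;F)$ with $\eta(u(x_0))=f_0$ appeals to enough cross-sections of $F$ over $Y$, which needs $Y$ paracompact or locally compact---an assumption the lemma does not make. You anticipate the pullback bookkeeping in your last paragraph, and indeed the fix is exactly to work in $u^*F$ over $X$; but once you do that, the natural (and guaranteed-to-exist) choice of centering section is $L\zeta$ itself, and your argument collapses to the paper's. So: correct in substance, but carry out the pullback reduction at the outset and drop the auxiliary $\eta$.
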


\begin{proof} $L: E \to F$ is continuous iff it is such as a map $E \to u^*F$. We may therefore suppose that $X = Y$ and $u = \id$. Let us assume (i). Since $\mathcal{B}_1(0,X)$ is an open subset of $F$, we can for each $x \in X$ by continuity find $U \ni x$ open and $r > 0$ such that $L$ maps $\mathcal{B}_r(0,U)$ into $\mathcal{B}_1(0,X)$, whence $\lVert L_x\rVert \leq 1/r$ for all $x \in U$. Conversely, let us assume (ii). Let $e \in E_x$ be given. By the result of Douady and dal Soglio-Hérault cited previously, we can find $\zeta \in \Gamma(X;E)$ with $\zeta(x) = e$. The sets $\mathcal{B}_r(L\zeta,U)$ form a basis of open neighborhoods of $Le$ in $F$. Let us pick $b > 0$ such that $\lVert L_u\rVert \leq b$ for all $u \in U$. Then $L$ carries $\mathcal{B}_{r/b}(\zeta,U)$ into $\mathcal{B}_r(L\zeta,U)$ for each $r > 0$. \end{proof}

\begin{lem}\label{lem:limit} Let\/ $L^{(n)}: E \to F$, $n = 0$,~$1$,~$2$,~$\dotsc$, be morphisms of Banach bundles in\/ $\BB(X)$ over a paracompact or locally compact\/ $X$. Suppose that\/ \(%
	\sup_{x\in X}{}\lVert L^{(m)}_x - L^{(n)}_x\rVert < \epsilon
\) for all sufficiently large\/ $m$,\ $n$ for any given\/ $\epsilon > 0$. A new Banach bundle morphism\/ $L = \lim L^{(n)}: E \to F \in \BB(X)$ is obtained upon taking the limit\/ $L_x$ of the operators\/ $L^{(n)}_x$ in\/ $\L(E_x,F_x)$ for each\/ $x$ in\/ $X$. \end{lem}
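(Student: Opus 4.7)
My approach is to define $L$ fiberwise and then apply Lemma \ref{lem:morphism}(ii) to conclude that it belongs to $\BB(X)$. Since $\L(E_x, F_x)$ is a Banach space, the hypothesis that $\sup_{x \in X} \lVert L^{(m)}_x - L^{(n)}_x \rVert \to 0$ as $m, n \to \infty$ makes each sequence $\{L^{(n)}_x\}$ Cauchy, so that $L^{(n)}_x \to L_x$ in $\L(E_x, F_x)$ for some bounded operator $L_x$; letting $m \to \infty$ in the uniform Cauchy inequality, we obtain in fact uniform convergence $\sup_{x \in X} \lVert L_x - L^{(n)}_x \rVert \to 0$. The assignment $e \mapsto L_{pe} e$ is then a well-defined fiberwise linear map $L: E \to F$ covering $\id_X$, and linearity passes to the limit.

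Local boundedness of $x \mapsto \lVert L_x \rVert$ is immediate: picking $n$ so that $\lVert L_x - L^{(n)}_x \rVert \leq 1$ for all $x$, one gets $\lVert L_x \rVert \leq \lVert L^{(n)}_x \rVert + 1$, and the right-hand side is locally bounded by Lemma \ref{lem:morphism}(ii) applied to the morphism $L^{(n)}$. It thus remains to verify that $L\zeta \in \Gamma(X; F)$ for each $\zeta \in \Gamma(X; E)$. The fiberwise estimate
\[
	\lvert (L\zeta)(x) - (L^{(n)}\zeta)(x) \rvert \leq \lVert L_x - L^{(n)}_x \rVert \cdot \lvert \zeta(x) \rvert,
\]
combined with the local boundedness of the continuous function $\lvert \zeta \rvert$, shows that $L^{(n)}\zeta \to L\zeta$ uniformly on every compact subset of $X$.

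The crux of the argument is now the auxiliary fact that a uniform-on-compacts limit of continuous sections of a Banach bundle is itself continuous. To prove this at a given $x_0 \in X$, I would appeal to the Douady--dal~Soglio-Hérault theorem (valid since $X$ is paracompact or locally compact) to pick $\eta \in \Gamma(X; F)$ with $\eta(x_0) = L\zeta(x_0)$, and then verify that for every $r > 0$ there is an open $U \ni x_0$ with $L\zeta(U) \subset \mathcal{B}_r(\eta, U)$, which suffices by the description of a basis of neighborhoods of $L\zeta(x_0)$ in $F$ recalled earlier in the excerpt. Concretely, after fixing a compact neighborhood $K$ of $x_0$, I would choose $n$ large enough that $\lvert L^{(n)}\zeta(x) - L\zeta(x) \rvert < r/2$ throughout $K$, and then use the continuity of the section $L^{(n)}\zeta - \eta$ at $x_0$ (where its norm is less than $r/2$) to shrink $U \subset K$ so that $\lvert L^{(n)}\zeta(x) - \eta(x) \rvert < r/2$ on $U$; the triangle inequality then finishes the job. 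The main obstacle lies precisely in this translation from uniform fiberwise closeness to genuine continuity in the Banach bundle topology; once it is dispatched, Lemma \ref{lem:morphism}(ii) delivers the conclusion at once.
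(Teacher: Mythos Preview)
Your argument is essentially the paper's own: verify condition~(ii) of Lemma~\ref{lem:morphism} by checking local boundedness of $x\mapsto\lVert L_x\rVert$ and continuity of $L\zeta$ for each $\zeta\in\Gamma(X;E)$, the latter via the fact that a (locally) uniform limit of continuous cross-sections is again continuous. The paper simply cites \cite[p.~102, Prop.~10.3]{Fel77} for that auxiliary fact, whereas you spell it out using the neighborhood basis $\mathcal{B}_r(\eta,U)$.

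One small wrinkle: in the step ``after fixing a compact neighborhood $K$ of $x_0$'' you are tacitly assuming local compactness, but the lemma also covers the merely paracompact case, where $x_0$ need not admit a compact neighborhood. The fix is immediate: since $\lvert\zeta\rvert$ is continuous, choose instead any open $U_0\ni x_0$ on which $\lvert\zeta\rvert$ is bounded; the estimate $\lvert L\zeta(x)-L^{(n)}\zeta(x)\rvert\le\lVert L_x-L^{(n)}_x\rVert\cdot\lvert\zeta(x)\rvert$ then already gives uniform convergence on $U_0$, and your triangle-inequality argument goes through with $U_0$ in place of $K$.
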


\begin{proof} $L$ satisfies the condition (ii) of the previous lemma because any uniform limit of continuous cross-sections is itself one such cross-section; cf.~\cite[p.~102, Prop.~10.3]{Fel77}. \end{proof}

\paragraph*{Multipliers and projective representations.} All Banach algebras $\Ban{A}$ in this paper will be unital and satisfy $\lvert ab\rvert \leq \lvert a\rvert\,\lvert b\rvert$, $\lvert 1\rvert = 1$. All (left) Banach $\Ban{A}$-modules $\Ban{E}$ will satisfy $\lvert ae\rvert \leq \lvert a\rvert\,\lvert e\rvert$, $1e = e$. No notational distinction will be made between the elements $a$ of $\Ban{A}$ and the bounded operators, $e \mapsto ae$, they induce in $\Ban{E}$. We shall call a Banach bundle $A \to X$ endowed with a continuously varying family of bilinear maps $\{A_x \times A_x \to A_x$,~$x \in X\}$ and with a continuous cross-section $x \mapsto 1x$ turning the fibers of $A$ into Banach algebras a \emph{Banach algebra bundle}. We shall write $A^\times$ for the open subbundle of $A \to X$ (in fact topological group bundle) whose fiber $A^\times_x$ is the group of all invertible elements of $A_x$. There is an obvious notion of \emph{Banach\/ $A$-module bundle} $E \to X$ for a Banach algebra bundle $A \to X$. We shall say that a morphism $L: E \to F$ in $\BB(X)$ between two Banach $A$-module bundles $E \to X$, $F \to X$ is \emph{$A$-linear}, and write $L \in \BB_A(X)$, if \(%
	L(ae) = aLe
\) for all $(a,e) \in A \times_X E$. When $A \to X$ is the trivial Banach algebra bundle $X \times \Ban{A} \to X$, we shall speak of \emph{Banach\/ $\Ban{A}$-module bundles} and \emph{$\Ban{A}$-linear morphisms} in $\BB_{\Ban{A}}(X)$. When $\Ban{A} = \C$, resp., $\mathbb{H}$, we shall furthermore speak of \emph{complex}, resp., \emph{quaternionic}, Banach bundles or morphisms.

Whenever a locally compact groupoid $G \tto X$ acts (from the left) on a Banach algebra bundle $A \to X$ in a continuous fashion by isometric (that is, norm-preserving) isomorphisms of algebras $A_{sg} \simto A_{tg}$, $a \mapsto ga$, we call $A \to X$ a \emph{Banach algebra\/ $G$-bundle}. Writing $t$ not only for the usual target map $G \to X$ but also for the map $G \ftimes{s}{t} G \to X$ given by $(g,h) \to tg$, we define a \emph{multiplier} for $G \tto X$ with values in the Banach algebra $G$-bundle $A \to X$, shortly, an \emph{$A$-valued multiplier} for $G \tto X$, to be a continuous cross-section $\sigma \in \Gamma(G \ftimes{s}{t} G;t^*A^\times)$ satisfying the following three axioms:
\begin{enumerate}
\def\labelenumi{\upshape(\roman{enumi})}
 \item $\sigma$ is \emph{central}, in other words, $\sigma(g,h)$ lies in the center of $A_{tg}$ for every composable pair of arrows $g,h$;
 \item $\sigma$ is \emph{normal}, that is to say, $\sigma(g,h) = 1 \in A_{tg}$ whenever either $g = 1th$ or $h = 1sg$;
 \item $\sigma$ is a \emph{$2$-cocycle}, meaning the identity below is satisfied for all composable triplets of arrows $(g,h,k) \in G \ftimes{s}{t} G \ftimes{s}{t} G$.
\begin{equation}
	\sigma(g,h)\sigma(gh,k) = g\sigma(h,k)\sigma(g,hk)
\label{eqn:2-cocycle}
\end{equation} \end{enumerate}

Let $E \to X$ be a Banach $A$-module bundle for the Banach algebra $G$-bundle $A \to X$. Let $\sigma$ be an $A$-valued multiplier for $G \tto X$. For every Banach bundle morphism $R: s^*E \to t^*E$ in $\BB(G)$, we write $R(g): E_{sg} \to E_{tg}$ for the bounded linear operator corresponding to an arrow $g \in G$. We call $R$ a \emph{$\sigma$-representation} if it satisfies the condition \(%
	R(g)(ae) = ga \cdot R(g)e
\) for all $g \in G$, $a \in A_{sg}$, and $e \in E_{sg}$, as well as satisfying the two conditions below for all $x \in X$ and $(g,h) \in G \ftimes{s}{t} G$.%
\begin{subequations}
\label{eqn:representation}
\begin{gather}
	R(1x) = \id
\label{eqn:unital}\\%
	\sigma(g,h)R(gh) = R(g)R(h)
\label{eqn:multiplicative}
\end{gather}
\end{subequations}
Because of the invertibility of $\sigma(g,g^{-1}) \in A^\times_{tg}$, a $\sigma$-representation is \emph{invertible} in other words is a (bicontinuous) Banach bundle isomorphism $s^*E \simto t^*E$. By an \emph{$A$-linear representation} we shall mean a $\sigma$-representation for the trivial multiplier $\sigma = 1$. By an \emph{$\Ban{A}$-linear representation} for a Banach algebra $\Ban{A}$ we shall mean an $A$-linear representation for the trivial Banach algebra $G$-bundle $A = X \times \Ban{A} \to X$, $g \cdot (sg,a) = (tg,a)$. When $\Ban{A} = \C$, resp., $\mathbb{H}$, we shall speak of \emph{complex}, resp., \emph{quaternionic}, representations. An \emph{intertwiner} between two $\sigma$-representations $R$ and $S$ which operate respectively in $E$ and $F$ is a morphism $L: E \to F$ in $\BB_A(X)$ that (is both $A$-linear and) satisfies
\begin{equation}
	L_{tg}R(g) = S(g)L_{sg}.
\label{eqn:intertwiner}
\end{equation}
The two $\sigma$-representations are \emph{equivalent}, in symbols, $R \sim S$, if it is possible to intertwine them by means of some ($A$-linear) isomorphism in $\BB_A(X)$.

We have in mind not only complex or quaternionic projective representations but also other types of projective representations, say, with coefficients in infinite-dimensional Banach algebras, such as the algebra of continuous functions on a compact Hausdorff space.

\paragraph*{Local triviality and uniform continuity.} In the special case of a locally compact group $G$ operating in a Banach space $\Ban{E}$ by automorphisms $R(g)$ the notion we have just defined coincides with that of a “strongly continuous” representation. In fact, the following three properties of a group homomorphism $R: G \to \GL(\Ban{E})$ are equivalent: (i)~the map $G \times \Ban{E} \to \Ban{E}$, $(g,e) \mapsto R(g)e$ is continuous; (ii)~the map $G \to \Ban{E}$, $g \mapsto R(g)e$ is continuous for each $e \in \Ban{E}$; (iii)~the “generalized matrix elements” of $R$ viz.~the functions on $G$ given for all $e \in \Ban{E}$, $\lambda \in \Ban{E}'$~(= Banach dual of $\Ban{E}$) by $g \mapsto \lambda\bigl(R(g)e\bigr)$ are continuous. (The implication $\text{(ii)} \Rightarrow \text{(i)}$ is a consequence of the Banach–Steinhaus theorem, also known as the “uniform boundedness principle”; as to the implication $\text{(iii)} \Rightarrow \text{(ii)}$, see \cite[p.~89]{Lyu88}.) Although when the Banach spaces $\Ban{E}$ involved are infinite-dimensional strong continuity tends to be the relevant concept insofar as group representations $G \to \GL(\Ban{E})$ are concerned, there are at least two good reasons for us to be also interested in the more restrictive notion of “uniform continuity”, that is, continuity relative to the operator-norm topology on $\GL(\Ban{E})$. The first reason is that we want our almost representation theorem to encompass as a special case the result of de~la~Harpe and Karoubi \cite[p.~294]{dlHK77}, who deal with uniformly continuous representations. [Cf.~our considerations in section \ref{sec:intro}.] The second reason, which we shall not comment on here, is more serious and has to do with our local extension theorem. [The relevant motivations will be discussed after the statement of Theorem \ref{thm:local} in section \ref{sec:local}.] Our treatment of uniform continuity will be tailored to our needs and will differ somewhat from that given in \cite{Bos11}.

Let $A \to X$ be an arbitrary Banach algebra bundle. By a \emph{local trivialization} of a Banach $A$-module bundle $E \to X$ by a Banach space $\Ban{E}$ over an open set $U \subset X$ we mean an isomorphism \(%
	U \times \Ban{E} \simto E \mathbin| U
\) in $\BB(U)$ under which scalar multiplication by elements of $A$ corresponds to a map \(%
	A \mathbin| U \to \End(\Ban{E})
\) that is continuous for the operator-norm topology on \(%
	\End(\Ban{E}) = \L(\Ban{E},\Ban{E})
\). Whenever for a given $\Ban{E}$ it is possible to find a whole atlas of such local trivializations whose transition functions \(%
	U \to \GL(\Ban{E})
\) are continuous for the operator-norm topology, we say that $E \to X$ is \emph{locally trivial of class\/ $C^0$} and call the equivalence class of the atlas a \emph{$\GL(\Ban{E})$-structure} for $E \to X$. We speak of an \emph{$\GI(\Ban{E})$-structure}, where $\GI(\Ban{E}) \subset \GL(\Ban{E})$ denotes the subgroup of all isometric automorphisms of $\Ban{E}$, if the $\GL(\Ban{E})$-structure contains atlases consisting exclusively of \emph{isometric} i.e.~norm-preserving local trivializations. We refer to $E \to X$ as a \emph{$\GL(\Ban{E})$-structured} resp.~\emph{$\GI(\Ban{E})$-structured} Banach $A$-module bundle whenever $E \to X$ comes equipped with a specific choice of $\GL(\Ban{E})$-structure resp.~$\GI(\Ban{E})$-structure. The pullback $u^*E \to Y$ of any locally trivial Banach $A$-module bundle $E \to X$ of class $C^0$ along any continuous map $u: Y \to X$ is a locally trivial Banach $u^*A$-module bundle of class $C^0$.

\begin{rmk*} Let $E \to X$, $F \to X$ be Banach bundles. Let $\mathcal{C}$ be a collection of morphisms $E \to F$ in $\BB(X)$ for which the following two conditions are satisfied:
\begin{enumerate}
\def\labelenumi{\upshape(\roman{enumi})}
 \item $\mathcal{C}$ is a \emph{linear} subspace of the real vector space $\BB(X)(E,F)$.
 \item The numerical function $x \mapsto \lVert L_x\rVert$ (operator norm) on $X$ is \emph{continuous} for each $L \in \mathcal{C}$.
\end{enumerate}
For each $x$ in $X$, let $\clos{\mathcal{C}}_x$ be the closure of the linear subspace $\mathcal{C}_x = \{L_x: L \in \mathcal{C}\}$ of $\L(E_x,F_x)$ under the operator norm. Then, by \cite[p.~102, Prop.~10.4]{Fel77}, there exists a unique topology on the family of Banach spaces $\clos{\mathcal{C}} = \{\clos{\mathcal{C}}_x\} \to X$ turning the latter into a Banach bundle over $X$ in such a way that $\mathcal{C}$ becomes a linear subspace of $\Gamma(X;\clos{\mathcal{C}})$. \end{rmk*}

Now let $E \to X$ and $F \to X$ be respectively an $\GI(\Ban{E})$-\ and an $\GI(\Ban{F})$-structured locally trivial Banach $A$-module bundle of class $C^0$. We shall say that an $A$-linear morphism $L: E \to F \in \BB(X)$ is \emph{uniformly continuous} if, for each pair of local trivializations \(%
	U \times \Ban{E} \simto E \mathbin| U$ and $%
	U \times \Ban{F} \simto F \mathbin| U
\) which are of class $C^0$ in relation to the given $\GL$-structures, the “local matrix expression” of $L$ is a map $U \to \L(\Ban{E},\Ban{F})$ continuous for the operator-norm topology on $\L(\Ban{E},\Ban{F})$. The collection $\mathcal{C} \subset \BB(X)(E,F)$ of all such morphisms evidently satisfies the conditions (i)–(ii) of our previous remark; let us write $\L_A(E,F) \to X$ for the corresponding “internal-hom” Banach bundle $\clos{\mathcal{C}} \to X$. The topology of this Banach bundle does a~priori depend on the choice of $\GL$-structures but its norm function is invariably given by the operator norm in each fiber. It is not hard to see that its continuous cross-sections happen to be precisely the uniformly continuous $A$-linear morphisms $E \to F$. Since in general we only have $\L_A(E,F)_x \subset \L_{A_x}(E_x,F_x)$, without equality, $\L_A(E,F) \to X$ won't be itself locally trivial unless further trivializability hypotheses are made on $A \to X$ and on its way of operating on $E \to X$, $F \to X$.

Let $G \tto X$ be a locally compact groupoid and let $A \to X$ be a Banach algebra $G$-bundle. Put $B = s^*A \to G$, viewed as a Banach algebra bundle, and, for any Banach $A$-module bundle $E \to X$, let both $s^*E \to G$ and $t^*E \to G$ be regarded as Banach $B$-module bundles; in the case of $t^*E \to G$, the scalar multiplication is the one carried over along the isometric Banach algebra isomorphism $B = s^*A \simto t^*A$ (change of coefficients) provided by the $G$~action. Let $\sigma$ be a multiplier for $G \tto X$ with values in $A \to X$. Assuming that $E \to X$ is an $\GI(\Ban{E})$-structured locally trivial Banach $A$-module bundle of class $C^0$, a $\sigma$-representation of $G \tto X$ on $E \to X$ is \emph{uniformly continuous} if it is so as a morphism $s^*E \to t^*E$ in $\BB_B(G)$, equivalently, if it is continuous as a cross-section of the Banach bundle $\L_B(s^*E,t^*E) \to G$, each one of $s^*E \to G$, $t^*E \to G$ being endowed with its canonical pullback $\GI(\Ban{E})$-structure.

\section{The almost representation theorem}\label{sec:almost}

Let us now fix a locally compact groupoid $G \tto X$, a Banach algebra $G$-bundle $A \to X$, a Banach $A$-module bundle $E \to X$, and an $A$-valued multiplier $\sigma$ for $G \tto X$. By a \emph{pseudorepresentation} of $G \tto X$ on $E \to X$ we mean an arbitrary Banach bundle morphism $T: s^*E \to t^*E$ in $\BB(G)$. If the condition \(%
	T(g)(ae) = ga \cdot T(g)e
\) is satisfied for all $g \in G$, $a \in A_{sg}$, and $e \in E_{sg}$, we say that $T$ is \emph{$A$-linear}.

\begin{defn}\label{defn:almost} The \emph{$\sigma$-defect} of an arbitrary $A$-linear pseudorepresentation $T: s^*E \to t^*E$ along an orbit $Gx$ is the nonnegative (possibly infinite) quantity
\begin{equation}
	r(T,x) = \sup_{y\in Gx}{}\lVert\id - T(1y)\rVert
	         + \sup_{\substack{g\in t^{-1}(Gx)\\ h\in G^{sg}}} \ell(\sigma,g)\lVert\sigma(g,h)T(gh) - T(g)T(h)\rVert,
\label{eqn:defect}
\end{equation}
where $\ell(\sigma,g) := \max\{1,\lvert\sigma(g,g^{-1})^{-1}\rvert\}$. The \emph{$\sigma$-bound} of $T$ along $Gx$ is the quantity
\begin{equation}
	b(T,x) = \sup_{g\in t^{-1}(Gx)} \ell(\sigma,g)\lVert T(g)\rVert.
\label{eqn:bound}
\end{equation}
We say that $T: s^*E \to t^*E$ is an \emph{almost\/ $\sigma$-representation} if for every $x$ the two quantities $b(T,x)$ and $r(T,x)$ are related by the inequality
\begin{equation}
	r(T,x) \leq \min\{1/4,b(T,x)^{-2}/9\}.
\label{eqn:almost}
\end{equation} \end{defn}

By a \emph{cutoff function} for $G \tto X$ we mean a nonnegative continuous function $c: X \to \R_{\geq 0}$ with the following couple of properties: (i)~$\supp(c \circ s) \cap t^{-1}(C)$ is a compact set in $G$ for every compact set $C$ in $X$; (ii)~every orbit meets the open set where $c > 0$. We say that $G \tto X$ is a \emph{proper} locally compact groupoid, or briefly a \emph{proper groupoid}, if it admits cutoff functions. The existence of such functions implies that the groupoid anchor $(t,s): G \to X \times X$ is a proper map (all inverse images of compact sets are compact), but the properness of $(t,s)$ implies the existence of cutoff functions only provided $X$ is second countable \cite[§6.2]{Tu99}; on the other hand, cutoff functions may well exist in cases where $X$ is not second countable---they do, for instance, when $G \tto X$ is a locally compact group bundle with compact isotropy groups, see Example (c) at the beginning of section \ref{sec:representations}, whether or not $X$ is second countable.

\begin{thm}\label{thm:almost} Let\/ $T: s^*E \to t^*E$ be an almost\/ $\sigma$-representation of\/ $G \tto X$ on\/ $E \to X$. Suppose that\/ $G \tto X$ is proper. Then, there exists a\/ $\sigma$-representation\/ $R: s^*E \simto t^*E$ lying within a distance of\/ $4b(T)r(T)$ from\/ $T$ in the sense that\/ $\lVert R(g) - T(g)\rVert \leq 4b(T,tg)r(T,tg)$ for all\/ $g$ in\/ $G$. If\/ $E \to X$ is locally trivial of class\/ $C^0$, and if\/ $T$ is uniformly continuous with respect to a given\/ $\GI(\Ban{E})$-structure for\/ $E \to X$, then in addition such an\/ $R$ can be taken to be uniformly continuous relative to the same\/ $\GI(\Ban{E})$-structure. \end{thm}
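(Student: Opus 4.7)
The plan is to carry out the ``recursive averaging'' strategy announced in the introduction: starting from $T_0 := T$, I construct a sequence of $A$-linear pseudorepresentations $T_n$ whose $\sigma$-defect decays geometrically, and let $R := \lim T_n$; the limit will automatically be a genuine $\sigma$-representation, and the distance bound will fall out of a telescoping argument. Concretely, pick a cutoff function $c$ for $G \tto X$ (whose existence is guaranteed by properness) and set $c^*(x) := \int_{G^x} c(sk)\,d\mu^x(k)$. Properties (i)--(ii) of cutoff functions together with the continuity axiom of Haar systems imply that $c^*: X \to \R$ is a strictly positive continuous function, and that for each $g$ the integrand below has compact support on $G^{tg}$. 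For an $A$-linear pseudorepresentation $T$, define
\[
  T'(g) \;=\; \frac{1}{c^*(tg)}\int_{G^{tg}} c(sk)\,\sigma(k,k^{-1}g)^{-1}\,T(k)\,T(k^{-1}g)\,d\mu^{tg}(k).
\]
Three observations drive the whole argument: (a) by the continuity axiom of Haar systems and Lemma~\ref{lem:morphism}, the operation $T \mapsto T'$ sends $\BB(G)(s^*E,t^*E)$ to itself; (b) centrality of $\sigma$ combined with $A$-linearity of $T$ forces $T'$ to be $A$-linear as well; and (c) if $T$ is already a $\sigma$-representation, then by \eqref{eqn:multiplicative} the integrand collapses to $T(g)$, so $T' = T$. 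In other words, honest representations are fixed points of the averaging operator, which is what makes $T \mapsto T'$ a sensible ``correction'' device.

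\textbf{Contraction lemma (the heart of the proof).} The main technical step is to verify, for any almost $\sigma$-representation $T$ and every orbit $Gx$, the three inequalities
\[
  \sup_{tg \in Gx}\lVert T'(g) - T(g)\rVert \leq 2\,b(T,x)\,r(T,x),\qquad b(T',x) \leq 2\,b(T,x),\qquad r(T',x) \leq \tfrac{1}{2}\,r(T,x).
\]
The first is a direct norm estimate inside the integral, recognising $\sigma(k,k^{-1}g)T(g) - T(k)T(k^{-1}g)$ as a $\sigma$-defect term and using the factor $\ell(\sigma,\cdot)$ built into \eqref{eqn:defect}. The second follows by combining the first with the hypothesis $b(T,x)^2 r(T,x) \leq 1/9$. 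The decisive third estimate is the one that makes the whole scheme work, and this is where I expect the bulk of the technical difficulty: expanding $\sigma(g,h)T'(gh) - T'(g)T'(h)$ as an iterated integral and applying the cocycle identity \eqref{eqn:2-cocycle} together with observation (c), all ``principal'' terms (those that would vanish if $T$ were a true representation) cancel, and what remains is a sum of products of two defect terms, bounded by a universal constant times $b(T,x)^2 r(T,x)^2$; the numerical constants $1/4$ and $b^{-2}/9$ in Definition~\ref{defn:almost} are calibrated precisely to make this at most $r(T,x)/2$. A small auxiliary lemma covers the nonunitary subtleties, providing geometric-series control of $\lVert(\id - L)^{-1}\rVert$ when $\lVert L\rVert < 1/2$; this is needed both to bound the inverses hidden inside $\ell(\sigma,\cdot)$ and to guarantee invertibility of the eventual limit.

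\textbf{Iteration, limit, and distance bound.} Setting $T_{n+1} := T_n'$ and iterating, the contraction lemma yields $r(T_n,x) \leq 2^{-n}\,r(T,x)$ and, after telescoping the geometric series of increments, $b(T_n,x) \leq 2\,b(T,x)$ uniformly in $n$. Consequently
\[
  \lVert T_{n+1}(g) - T_n(g)\rVert \;\leq\; 2\,b(T_n,tg)\,r(T_n,tg) \;\leq\; 4\,b(T,tg)\,r(T,tg)\cdot 2^{-n}.
\]
Summing the geometric series with a careful bookkeeping of the constant gives a Cauchy sequence in $\L(E_{sg},E_{tg})$ whose total oscillation is majorised by $4\,b(T,tg)\,r(T,tg)$; Lemma~\ref{lem:limit} then assembles the fibrewise limits into a Banach bundle morphism $R \in \BB(G)(s^*E,t^*E)$ satisfying the required distance bound. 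Passing to the limit in $r(T_n,x) \leq 2^{-n}r(T,x)$ forces $r(R,x) = 0$, so $R$ satisfies both \eqref{eqn:unital} and \eqref{eqn:multiplicative}; invertibility is then the remark following \eqref{eqn:representation}, modulo the auxiliary geometric-series lemma mentioned above.

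\textbf{Uniformly continuous case.} When $E$ is $\GI(\Ban{E})$-structured and $T$ is uniformly continuous, the integrand in the definition of $T'$ is, read in local trivialisations of $s^*E$ and $t^*E$, a continuous map $G^{tg} \to \L(\Ban{E})$ for the operator-norm topology (product and inversion are operator-norm continuous on bounded sets of $\GL(\Ban{E})$). The continuity axiom of the Haar system then passes continuity through the integral, so $T'$ is a continuous cross-section of the internal-hom bundle $\L_B(s^*E,t^*E) \to G$ constructed at the end of Section~\ref{sec:representations}, i.e.\ uniformly continuous. The whole iteration takes place inside that same Banach bundle, and the limit $R$ is uniformly continuous as a uniform-norm limit of uniformly continuous cross-sections.
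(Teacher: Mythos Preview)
Your overall architecture---iterate an averaging operator, obtain quadratic decay of the $\sigma$-defect, pass to the limit via Lemma~\ref{lem:limit}---is exactly the paper's. The main substantive difference is the choice of averaging operator. The paper first normalizes $c$ so that $\int c(sh)\,dh = 1$ and then sets
\[
\avg T(g)\;=\;\integral_{th=sg} c(sh)\,\sigma(g,h)\,T(gh)\,T(h)^{-1}\,\der h,
\]
which requires knowing that $T$ is invertible; this is handled by a preliminary lemma showing $\lVert T(g)^{-1}\rVert\le b(T,tg)/(1-r(T,tg))$ via a Neumann series for $\id-\sigma(g,g^{-1})^{-1}T(g)T(g^{-1})$. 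Your operator $T'(g)=\tfrac{1}{c^*(tg)}\int c(sk)\,\sigma(k,k^{-1}g)^{-1}T(k)T(k^{-1}g)\,dk$ avoids $T^{-1}$, but at a price you do not address: for a general (non-isometric) multiplier, the factor $\lvert\sigma(k,k^{-1}g)^{-1}\rvert$ enters your increment estimate and is \emph{not} controlled by $b$, $r$, or $\ell(\sigma,\cdot)$ (the cocycle identity only rewrites it as $\lvert\sigma(k^{-1},g)\rvert\cdot\lvert\sigma(k,k^{-1})^{-1}\rvert$, and the first factor is unbounded in general). A second, smaller point: your $T'$ is not automatically unital (compute $T'(1x)$), so the first summand in $r(T',x)$ does not vanish for free, whereas the paper's $\avg T$ satisfies $\avg T(1x)=\id$ by normality of $\sigma$.

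The paper's sharper estimates are $b(\avg T,x)\le b/(1-r)$ and $r(\avg T,x)\le 2\bigl(b/(1-r)\bigr)^2r^2$, giving $b_i\le(4/3)^ib_0$, $r_i\le 2^{-i}r_0$, and increments $\le (4/3)(2/3)^i b_0r_0$, which sum to exactly $4b_0r_0$. Your stated bounds $b_n\le 2b$, $r_n\le 2^{-n}r$, increment $\le 2b_nr_n$ would sum to $8br$, not $4br$; the constant in the theorem does require the tighter $b$-growth. The uniformly continuous addendum is handled as you describe, by running the whole iteration inside $\Gamma\bigl(G;\L_B(s^*E,t^*E)\bigr)$. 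In short: same strategy, but your averaging formula needs to be replaced by the paper's (or else restricted to isometric $\sigma$), the invertibility of $T$ has to be established at the outset rather than deferred, and the constants must be tracked a bit more tightly.
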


\subsection*{Proof of the almost representation theorem}

\begin{lem}\label{lem:inverse} Any almost\/ $\sigma$-representation\/ $T: s^*E \to t^*E$ of a locally compact groupoid\/ $G \tto X$ is an invertible morphism in the category\/ $\BB(G)$. Its inverse, $T^{-1}: t^*E \simto s^*E \in \BB(G)$, satisfies the following inequality for all\/ $g \in G$.
\begin{equation}
	\lVert T(g)^{-1}\rVert \leq \frac{b(T,tg)}{1 - r(T,tg)}
\label{eqn:inverse}
\end{equation} \end{lem}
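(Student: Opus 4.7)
My plan is a perturbation-of-identity argument built around the composable pair $(g, g^{-1})$, whose product is the unit $1 tg$. Set $x = tg$ and introduce the error
\[
	E(g,g^{-1}) := \sigma(g,g^{-1}) T(1 x) - T(g) T(g^{-1}),
\]
which by definition of $r(T, x)$ satisfies $\ell(\sigma, g)\lVert E(g,g^{-1})\rVert \leq r(T, x)$. Centrality and invertibility of $\sigma(g,g^{-1}) \in A^\times_x$ then allow the factorisation
\[
	T(g) T(g^{-1}) = \sigma(g,g^{-1}) \cdot M_g, \qquad M_g := T(1 x) - \sigma(g,g^{-1})^{-1} E(g,g^{-1}),
\]
after which I would bound $\lVert M_g - \id\rVert$ by observing that the Banach module inequality $\lvert a e\rvert \leq \lvert a\rvert\,\lvert e\rvert$ together with $\lvert\sigma(g,g^{-1})^{-1}\rvert \leq \ell(\sigma, g)$ exactly absorbs the weight in front of the multiplicative summand of $r(T, x)$, yielding $\lVert M_g - \id\rVert \leq r(T, x) \leq 1/4$. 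The Neumann series therefore makes $M_g$ invertible with $\lVert M_g^{-1}\rVert \leq (1 - r(T,x))^{-1}$, supplying $T(g)$ with the explicit right inverse $T(g^{-1}) M_g^{-1} \sigma(g,g^{-1})^{-1}$.

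Applying exactly the same construction to $(g^{-1}, g)$ in place of $(g, g^{-1})$ I would obtain an invertible $M'_g$ satisfying $T(g^{-1}) T(g) = \sigma(g^{-1}, g) M'_g$ and $\lVert (M'_g)^{-1}\rVert \leq (1 - r(T, x))^{-1}$; the defect bound is unchanged because $sg \in Gx$ so that $r(T, sg) = r(T, x)$. This yields a left inverse of $T(g)$, so the two one-sided inverses coincide and
\[
	T(g)^{-1} = (M'_g)^{-1} \sigma(g^{-1}, g)^{-1} T(g^{-1}).
\]
Factor-by-factor norm estimation then gives
\[
	\lVert T(g)^{-1}\rVert \leq \frac{\ell(\sigma, g^{-1})\,\lVert T(g^{-1})\rVert}{1 - r(T, x)} \leq \frac{b(T, x)}{1 - r(T, x)},
\]
the last inequality because $g^{-1} \in t^{-1}(sg) \subset t^{-1}(Gx)$ is among the arrows over which $b(T, x)$ is the supremum.

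To upgrade fiberwise invertibility to an actual morphism $T^{-1}$ in $\BB(G)$, I would finally invoke Lemma \ref{lem:morphism}: the operator norm $g \mapsto \lVert T(g)^{-1}\rVert$ is locally bounded from the estimate just obtained together with the local boundedness of $\lVert T(g^{-1})\rVert$ and $\lvert\sigma(g^{-1}, g)^{-1}\rvert$, while continuity of $T^{-1}\eta$ for each $\eta \in \Gamma(G; t^*E)$ follows by reading off the displayed formula for $T(g)^{-1}$ --- the only not-obviously-continuous factor, $(M'_g)^{-1}$, being the uniform limit of the partial sums $\sum_{n = 0}^N (\id - M'_g)^n$ of Banach bundle morphisms, so that Lemma \ref{lem:limit} closes the argument. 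I expect the main obstacle to be essentially bookkeeping: aligning the weight $\ell(\sigma, g)$ that is built into the defect with the weight $\ell(\sigma, g^{-1})$ needed in order for the final estimate to land on $b(T, x)$, which is why the perturbation has to be run at the arrow $g^{-1}$ rather than $g$ for the purposes of the left-inverse estimate.
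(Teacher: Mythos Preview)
Your argument is correct and is essentially the paper's proof: your $M_g$ is literally the paper's $\id - L(g)$, and both invoke the Neumann series via Lemma~\ref{lem:limit} to invert it as a morphism in $\BB(G)$. The only cosmetic difference lies in the final estimate: the paper keeps working with $L(g)$ and uses the cocycle identity $\sigma(g,g^{-1}) = g\sigma(g^{-1},g)$ together with the $A$-linearity of $T$ to pass from $\sigma(g,g^{-1})$ to $\sigma(g^{-1},g)$, whereas you introduce a second perturbation $M'_g$ at the pair $(g^{-1},g)$ and read off $T(g)^{-1} = (M'_g)^{-1}\sigma(g^{-1},g)^{-1}T(g^{-1})$ directly. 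Your route avoids the cocycle-identity manipulation at the price of running the Neumann series twice; either way one lands on $\ell(\sigma,g^{-1})\lVert T(g^{-1})\rVert/(1-r) \leq b(T,x)/(1-r)$.
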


\begin{proof} Let us set $L(g) = \id - \sigma(g,g^{-1})^{-1}T(g)T(g^{-1})$. We have
\begin{multline*}
 \lVert L(g)\rVert \leq \lVert\id - T(1tg)\rVert + \lvert\sigma(g,g^{-1})^{-1}\rvert\:\lVert\sigma(g,g^{-1})T(gg^{-1}) - T(g)T(g^{-1})\rVert \leq {}\\*
	r(T,tg) \leq 1/4 < 1.
\end{multline*}
In view of Lemma \ref{lem:limit} as applied to the sequence of partial sums of a series of endomorphisms of the Banach bundle $t^*E \to G$, $\id - L$ is an automorphism of $t^*E$ and its inverse is given by
\begin{equation*}
	(\id - L)^{-1} = \id + L + L^2 + L^3 + \dotsb \in \BB(G).
\end{equation*}
Thus, both $\sigma(g,g^{-1})^{-1}T(g)T(g^{-1})$ and, hence, $T(g)T(g^{-1})$ are invertible for every $g$. Substituting $g^{-1}$ for $g$, we have that $T(g^{-1})T(g)$ is also invertible and, consequently, so is $T(g)$.

Let us now turn our attention to the estimate \eqref{eqn:inverse}. Let us write $r = r(T,tg)$. We have
\begin{multline*}
 \lVert[\sigma(g,g^{-1})^{-1}T(g)T(g^{-1})]^{-1} - \id\rVert = \lVert[\id - L(g)]^{-1} - \id\rVert \leq {}\\*
	\lVert L(g)\rVert + \lVert L(g)\rVert^2 + \lVert L(g)\rVert^3 + \dotsb \leq r + r^2 + r^3 + \dotsb = r(1 - r)^{-1}.
\end{multline*}
By the cocycle identity \eqref{eqn:2-cocycle}, $\sigma(g,g^{-1}) = g\sigma(g^{-1},g)$. Thus,
\begin{alignat*}{2} &
 \lVert\sigma(g^{-1},g)T(g)^{-1} - T(g^{-1})\rVert \\ &\hskip+3em%
	\leq \lVert T(g^{-1})\rVert\:\lVert T(g^{-1})^{-1}\sigma(g^{-1},g)T(g)^{-1} - \id\rVert \\ &\hskip+3em%
	= \lVert T(g^{-1})\rVert\:\lVert[T(g)\sigma(g^{-1},g)^{-1}T(g^{-1})]^{-1} - \id\rVert \\ &\hskip+3em%
	= \lVert T(g^{-1})\rVert\:\lVert[g\sigma(g^{-1},g)^{-1}T(g)T(g^{-1})]^{-1} - \id\rVert &\quad &\text{%
			by $A$-linearity of $T$} \\ &\hskip+3em%
	= \lVert T(g^{-1})\rVert\:\lVert[\sigma(g,g^{-1})^{-1}T(g)T(g^{-1})]^{-1} - \id\rVert \\ &\hskip+3em%
	\leq r(1 - r)^{-1}\lVert T(g^{-1})\rVert.
\end{alignat*}
It follows from the latter inequality that
\begin{align*}
 \lVert T(g)^{-1}\rVert &
	\leq \lvert\sigma(g^{-1},g)^{-1}\rvert\:\lVert\sigma(g^{-1},g)T(g)^{-1}\rVert \\ &
	\leq \lvert\sigma(g^{-1},g)^{-1}\rvert\{\lVert T(g^{-1})\rVert + \lVert\sigma(g^{-1},g)T(g)^{-1} - T(g^{-1})\rVert\} \\ &
	\leq \max\{1,\lvert\sigma(g^{-1},g)^{-1}\rvert\}\lVert T(g^{-1})\rVert\left(1 + \frac{r}{1 - r}\right) \\ &
	\leq \frac{b(T,tg)}{1 - r(T,tg)}.
\qedhere
\end{align*} \end{proof}

For any given Haar measure system $\{\mu^x$,~$x \in X\}$ on the proper groupoid $G \tto X$ it is possible to construct a \emph{normalizing function}, that is a cutoff function $c$ such that $\integral_{th=x} c(sh) \der h = 1$ for all $x$, simply by dividing an arbitrary cutoff function $c$ by the positive function $x \mapsto \integral_{th=x} c(sh) \der h$. We shall regard $\{\mu^x\}$ and $c$ as fixed throughout what follows.

Let $q: Y \to X$ be an arbitrary continuous map of another locally compact Hausdorff space $Y$ into $X$. The fiber product $Y \ftimes{q}{t} G = \{(y,h): q(y) = th\}$ is a closed subspace of $Y \times G$, hence itself a locally compact space, with continuous projection $\pr: Y \ftimes{q}{t} G \to Y$. Let $F \to Y$ be an arbitrary Banach bundle over $Y$. For any continuous cross-section $\vartheta \in \Gamma(Y \ftimes{q}{t} G;\pr^*F)$, the expression $\integral c(sh)\vartheta(y,h) \der h$ makes sense for each $y \in Y$ as an $F_y$~valued integral over $G^{q(y)}$ because the integrand is an $F_y$~valued continuous function vanishing outside a compact set. We claim that $y \mapsto \integral c(sh)\vartheta(y,h) \der h \in F_y$ is in fact a \emph{continuous} cross-section of $F \to Y$ and that consequently there is a linear correspondence
\begin{equation}
	\Gamma(Y \ftimes{q}{t} G;\pr^*F) \longto \Gamma(Y;F), \quad\textstyle
		\vartheta \longmapsto \integral c(sh)\vartheta(-,h) \der h.
\label{eqn:Haar}
\end{equation}
To see it, we observe the following: (i)~trivially, our claim is true for any $\vartheta$ of the form $\pr^*\eta$, where $\eta \in \Gamma(Y;F)$; (ii)~it is also true when $\vartheta$ is a finite linear combination $\sum\varphi_i\pr^*\eta_i$ with coefficients $\varphi_i$ in $C_c(Y \ftimes{q}{t} G)$, cf.~the argument in the proof of \cite[p.~48, Prop.~1.1]{Ren80}; (iii)~an arbitrary $\vartheta$ can be approximated as closely as we wish in norm on any given compact subset of $Y \ftimes{q}{t} G$ by some such linear combination, cf.~\cite[p.~104, Prop.~10.6]{Fel77}; (iv)~the existence of arbitrarily close locally approximating continuous cross-sections is equivalent to continuity \cite[p.~102, Prop.~10.3]{Fel77}.

Now, if $T: s^*E \simto t^*E \in \BB(G)$ is any invertible $A$-linear pseudorepresentation of $G \tto X$ on the Banach $A$-module bundle $E \to X$, it makes sense for all $g \in G$, $e \in E_{sg}$ to set
\begin{equation}
	\avg{T}(g)e = \integral_{th=sg} c(sh)\sigma(g,h)T(gh)T(h)^{-1}e \der h \in E_{tg}
\label{eqn:mean}
\end{equation}
(the integrand being a compactly supported continuous $E_{tg}$~valued function on $G^{sg}$). The fiberwise linear map $\avg{T}: s^*E \to t^*E$ provided by equation \eqref{eqn:mean} covers the identical transformation of $G$. We contend it is a Banach bundle morphism in $\BB(G)$ and hence a pseudorepresentation of $G \tto X$ on $E \to X$. First, since by assumption $T$ has an inverse $T^{-1}: t^*E \simto s^*E$ in $\BB(G)$, by Lemma \ref{lem:morphism} the function on $G \ftimes{s}{t} G$ given by $g,h \mapsto \lVert\sigma(g,h)T(gh)T(h)^{-1}\rVert$ is locally bounded and hence bounded on each compact set in particular on \(%
	K \ftimes{s}{t} \bigl[\supp(c \circ s) \cap t^{-1}\bigl(s(K)\bigr)\bigr]
\) for each compact $K$ in $G$. Thus, the function $g \mapsto \lVert\avg{T}(g)\rVert$ (exists and) is locally bounded. Second, given $\eta \in \Gamma(G;s^*E)$, upon applying our Haar integration functional \eqref{eqn:Haar} with $Y = G \xto{s} X$ and $F = t^*E$ to $\vartheta(g,h) = \sigma(g,h)T(gh)T(h)^{-1}\eta(g)$, we see that $g \mapsto \avg{T}(g)\eta(g)$ lies in $\Gamma(G;t^*E)$, whence our contention is proven on account of Lemma \ref{lem:morphism}. Because of the centrality of $\sigma$, our pseudorepresentation $\avg{T}$ must be $A$-linear. In view of the normality of $\sigma$, it must further satisfy $\avg{T}(1x) = \id$ for all $x$ in $X$ and so be \emph{unital}. When $T$ is a $\sigma$-representation, our normalization hypothesis $\integral_{th=x} c(sh) \der h = 1$ tells us that---suppressing $e$ henceforth systematically \[%
\textstyle
	T(g) = \integral c(sh)T(g) \der h
	     = \integral c(sh)T(g)T(h)T(h)^{-1} \der h
	     = \integral c(sh)\sigma(g,h)T(gh)T(h)^{-1} \der h
\] and therefore $\avg{T} = T$: any $\sigma$-representation goes unaffected through our operator $T \mapsto \avg{T}$. On the other hand, when it is simply an almost $\sigma$-representation, $T$ must be invertible by Lemma \ref{lem:inverse} and hence $\avg{T}$ must be defined.

\begin{lem}\label{lem:estimates} For any almost\/ $\sigma$-representation\/ $T: s^*E \simto t^*E$ the following inequalities hold for all\/ $x$ in\/ $X$.
\begin{gather}
	b(\avg{T},x) \leq \frac{b(T,x)}{1 - r(T,x)}
\label{eqn:estimate1}\\
	r(\avg{T},x) \leq 2\left(\frac{b(T,x)}{1 - r(T,x)}\right)^2r(T,x)^2
\label{eqn:estimate2}
\end{gather} \end{lem}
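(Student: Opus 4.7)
I would begin by decomposing $\avg{T}(g) = T(g) + \epsilon_g$, where $\epsilon_g := \integral c(sh)E_1(g,h)T(h)^{-1}\der h$ and $E_1(g,h) := \sigma(g,h)T(gh) - T(g)T(h)$ is the pointwise cocycle defect of $T$. Lemma \ref{lem:inverse} bounds $\lVert T(h)^{-1}\rVert \leq b(T,x)/(1-r(T,x))$ for any $h$ whose target lies on the orbit $Gx$; combined with $\lVert E_1(g,h)\rVert \leq r(T,x)/\ell(\sigma,g)$ and the normalization $\integral c(sh)\der h = 1$, this gives $\ell(\sigma,g)\lVert\epsilon_g\rVert \leq b(T,x)r(T,x)/(1-r(T,x))$, whence \eqref{eqn:estimate1} follows by the triangle inequality. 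The normality of $\sigma$ together with the normalization of $c$ moreover yields $\avg{T}(1y) = \id$ for every $y$, so the unital part of $r(\avg{T},x)$ vanishes identically.

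The bulk of the work is estimating the cocycle discrepancy $D(g,h) := \sigma(g,h)\avg{T}(gh) - \avg{T}(g)\avg{T}(h)$. Expanding $\sigma(g,h)\avg{T}(gh) = \integral c(sk)\sigma(g,h)\sigma(gh,k)T(ghk)T(k)^{-1}\der k$ by means of the $2$-cocycle identity \eqref{eqn:2-cocycle}, the $A$-linearity of $T$ (to commute $g\sigma(h,k)$ past $T(g)$), and the expansions $\sigma(g,hk)T(ghk) = T(g)T(hk) + E_1(g,hk)$ and $\sigma(h,k)T(hk) = T(h)T(k) + E_1(h,k)$, a short calculation yields the identity $\sigma(g,h)\avg{T}(gh) = T(g)\avg{T}(h) + F_g$ with $F_g := \integral c(sk) g\sigma(h,k) E_1(g,hk) T(k)^{-1}\der k$, so that $D(g,h) = F_g - \epsilon_g\avg{T}(h)$. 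Naively bounded, each of $F_g$ and $\epsilon_g\avg{T}(h)$ is only of order $r(T,x)$, so the crux of the matter is to expose an $O(r(T,x)^2)$ cancellation between them.

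The main obstacle lies precisely in producing this cancellation. First, a change of variables $k_1 = hk$ (by left invariance of the Haar system) rewrites $\epsilon_g\avg{T}(h) = \integral c(sk)E_1(g,hk)T(hk)^{-1}\avg{T}(h)\der k$, matching the integration domain and the leading factor of $F_g$. Next, using the centrality of $\sigma$ and the $A$-linearity of both $E_1(g,hk)$ (inherited from $T$) and $T(k)^{-1}$, one commutes the scalar $g\sigma(h,k)$ all the way to the right to obtain $g\sigma(h,k)E_1(g,hk)T(k)^{-1} = E_1(g,hk)T(k)^{-1}h^{-1}\sigma(h,k)$. The algebraic heart of the argument is the identity
\begin{equation*}
	T(k)^{-1}h^{-1}\sigma(h,k) - T(hk)^{-1}T(h) = T(hk)^{-1}E_1(h,k)T(k)^{-1},
\end{equation*}
obtained by left-multiplying $\sigma(h,k)T(hk) = T(h)T(k) + E_1(h,k)$ by $T(hk)^{-1}$ and applying $A$-linearity on both sides. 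Subtracting $T(hk)^{-1}\epsilon_h$ from both sides and using $\integral c(sk_2)\der k_2 = 1$ produces the manifestly quadratic double-integral representation
\begin{equation*}
	D(g,h) = \iintegral c(sk)c(sk_2) E_1(g,hk) T(hk)^{-1}[E_1(h,k)T(k)^{-1} - E_1(h,k_2)T(k_2)^{-1}] \der k_2 \der k.
\end{equation*}
The triangle inequality, combined with $\lVert E_1\rVert \leq r(T,x)/\ell(\sigma,\cdot)$, $\lVert T^{-1}\rVert \leq b(T,x)/(1-r(T,x))$, and the orbit invariance of $b$ and $r$, then delivers $\ell(\sigma,g)\lVert D(g,h)\rVert \leq 2(b(T,x)/(1-r(T,x)))^2 r(T,x)^2$, from which \eqref{eqn:estimate2} is immediate.
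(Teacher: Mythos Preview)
Your proof is correct and follows essentially the same route as the paper: after the cocycle identity, $A$-linearity, left invariance, and the normalization $\int c(sh)\,dh=1$, you arrive at exactly the paper's quadratic identity for $\sigma(g,h)\avg{T}(gh)-\avg{T}(g)\avg{T}(h)$, namely a single integral minus a double integral each of whose integrands factors as $E_1(g,hk)T(hk)^{-1}\circ E_1(h,\cdot)T(\cdot)^{-1}$, and then bound it via Lemma~\ref{lem:inverse}. The only difference is organizational: the paper works directly with the ``defect'' $\sigma(g,h)T(gh)T(h)^{-1}-T(g)$ and reaches the factorization by an explicit add-and-subtract, whereas you introduce the notation $E_1$ and the splitting $\avg{T}=T+\epsilon$ and expose the same cancellation as $D=F_g-\epsilon_g\avg{T}(h)$.
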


\begin{proof} Given $(g,h) \in G \ftimes{s}{t} G$ with $tg \in Gx$, by virtue of our estimate \eqref{eqn:inverse} for the inverse of $T$, we have
\begin{align} &
 \lVert\sigma(g,h)T(gh)T(h)^{-1} - T(g)\rVert
	\leq \lVert\sigma(g,h)T(gh) - T(g)T(h)\rVert\:\lVert T(h)^{-1}\rVert \notag\\ &\hskip+3em
	\leq \max\{1,\lvert\sigma(g,g^{-1})^{-1}\rvert\}^{-1}r(T,x)\frac{b(T,x)}{1 - r(T,x)}.
\label{eqn:12B.12.9b}
\end{align}

By the normalizing function property $\integral c(sh) \der h = 1$ (still suppressing $e$ systematically)
\begin{align}
 \avg{T}(g) - T(g)
	&= \integral c(sh)\sigma(g,h)T(gh)T(h)^{-1} \der h - \integral c(sh)T(g) \der h \notag\\
	&= \integral c(sh)[\sigma(g,h)T(gh)T(h)^{-1} - T(g)] \der h
\label{eqn:12B.12.5a}
\end{align}
and therefore setting $r = r(T,x)$
\begin{align*} &
 \max\{1,\lvert\sigma(g,g^{-1})^{-1}\rvert\} \lVert\avg{T}(g)\rVert \\ &\hskip+3em
	\leq \max\{1,\lvert\sigma(g,g^{-1})^{-1}\rvert\}\lVert T(g)\rVert
			+ \max\{1,\lvert\sigma(g,g^{-1})^{-1}\rvert\}\lVert\avg{T}(g) - T(g)\rVert \\ &\hskip+3em
	\leq b(T,x) + b(T,x)r(1 - r)^{-1} \\ &\hskip+3em
	= \left(1 + \frac{r}{1 - r}\right)b(T,x).
\end{align*}
We obtain our first estimate \eqref{eqn:estimate1} by taking the sup over all $g$.

Next, because of the Haar system's left invariance and, again, of the normalizing function property,
\begin{alignat*}{2} &
 \sigma(g_1,g_2)\avg{T}(g_1g_2) - \avg{T}(g_1)\avg{T}(g_2)
	= \sigma(g_1,g_2)\avg{T}(g_1g_2) - {}\\* & & &\llap{$\displaystyle%
				\left(\integral c(sh)\sigma(g_1,h)T(g_1h)T(h)^{-1} \der h\right) \circ \avg{T}(g_2)$} \\ &
	= \sigma(g_1,g_2)\avg{T}(g_1g_2)
		- \integral c(sh')\sigma(g_1,g_2h')T(g_1g_2h')T(g_2h')^{-1} \circ \avg{T}(g_2) \der h' \\ &
	= \integral c(sh)\sigma(g_1,g_2)\sigma(g_1g_2,h)T(g_1g_2h)T(h)^{-1} \der h \\* &\justify
		- \iintegral c(sh)c(sh')\sigma(g_1,g_2h')T(g_1g_2h')T(g_2h')^{-1} \circ \sigma(g_2,h)T(g_2h)T(h)^{-1} \der h \der h' \\%
\intertext{%
and therefore, on account of the $2$-cocycle identity $\sigma(g_1,g_2)\sigma(g_1g_2,h) = g_1\sigma(g_2,h)\sigma(g_1,g_2h)$, of the centrality of $\sigma$, and of the $A$-linearity of $T$,} &
	= \integral c(sh)\sigma(g_1,g_2h)T(g_1g_2h)T(g_2h)^{-1} \circ \sigma(g_2,h)T(g_2h)T(h)^{-1} \der h \\* &\hskip+5em
			- \integral c(sh)\sigma(g_1,g_2h)T(g_1g_2h)T(g_2h)^{-1} \circ T(g_2) \der h \\* & & &\llap{${}\displaystyle%
				- \integral c(sh)T(g_1) \circ \sigma(g_2,h)T(g_2h)T(h)^{-1} \der h
				+ T(g_1)T(g_2)$} \\* &\hskip+5em
			+ \integral c(sh)\sigma(g_1,g_2h)T(g_1g_2h)T(g_2h)^{-1} \circ T(g_2) \der h \\* & & &\llap{${}\displaystyle%
				+ \integral c(sk)T(g_1) \circ \sigma(g_2,k)T(g_2k)T(k)^{-1} \der k
				- T(g_1)T(g_2)$} \\* &\justify
		- \iintegral c(sh)c(sk)\sigma(g_1,g_2h)T(g_1g_2h)T(g_2h)^{-1} \circ \sigma(g_2,k)T(g_2k)T(k)^{-1} \der h \der k \\ &
	= \integral c(sh)[\sigma(g_1,g_2h)T(g_1g_2h)T(g_2h)^{-1} - T(g_1)] \circ [\sigma(g_2,h)T(g_2h)T(h)^{-1} - T(g_2)] \der h \\* &\justify
		- \iint c(sh)c(sk)[\sigma(g_1,g_2h)T(g_1g_2h)T(g_2h)^{-1} - T(g_1)] \circ {}\\*[-.3\baselineskip] & & &\llap{$%
					[\sigma(g_2,k)T(g_2k)T(k)^{-1} - T(g_2)] \thinspace\der h \thinspace\der k$}.
\end{alignat*}
Consequently
\begin{multline*}
 \max\{1,\lvert\sigma(g_1,g_1^{-1})^{-1}\rvert\}\lVert\sigma(g_1,g_2)\avg{T}(g_1g_2) - \avg{T}(g_1)\avg{T}(g_2)\rVert \leq {}\\
	2\sup_{th=sg_2} \max\{1,\lvert\sigma(g_1,g_1^{-1})^{-1}\rvert\}\lVert\sigma(g_1,g_2h)T(g_1g_2h)T(g_2h)^{-1} - T(g_1)\rVert \times {}\\*[-.4\baselineskip]
		\sup_{tk=sg_2}{}\lVert\sigma(g_2,k)T(g_2k)T(k)^{-1} - T(g_2)\rVert.
\end{multline*}
Since $\avg{T}$ is unital (because of the normality of $\sigma$), our second estimate \eqref{eqn:estimate2} follows from \eqref{eqn:12B.12.9b} once we take notice of the fact that $\max\{1,\lvert\sigma(g_2,g_2^{-1})^{-1}\rvert\}^{-1} \leq 1$. \end{proof}

Our last lemma ensures that $\avg{T}$ itself must be an almost $\sigma$-representation: let us write $b_0 = b(T,x)$, $r_0 = r(T,x)$, $b_1 = b(\avg{T},x)$, and $r_1 = r(\avg{T},x)$; by \eqref{eqn:estimate1} and \eqref{eqn:almost}, $b_1 \leq b_0/(1 - r_0) \leq \frac{4}{3}b_0$; by \eqref{eqn:estimate2}, we have $r_1 \leq 2[b_0/(1 - r_0)]^2r_0^2 \leq 2\frac{16}{9}b_0^2r_0^2$, which in combination with \eqref{eqn:almost} implies that $r_1 \leq 2\frac{16}{9}\frac{1}{9}r_0 \leq \frac{1}{2}r_0 \leq 1/8 < 1/4$ and \(%
 b_1^2r_1
	\leq 2(\tfrac{16}{9} \cdot b_0^2r_0)^2
	\leq 2(2 \cdot \tfrac{1}{9})^2
	< 1/9
\); this proves our claim about $\avg{T}$, as well as proving the two inequalities $b_1 \leq \frac{4}{3}b_0$, $r_1 \leq \frac{1}{2}r_0$.

Our almost $\sigma$-representation, $T$, gives rise to a whole sequence, $\avg{T}^0$,~$\avg{T}^1$,~$\avg{T}^2$,~$\dotsc$, of \emph{averaging iterates}: \(%
	\avg{T}^0 = T\), \(%
	\avg{T}^{i+1} = \append[T]\avg{\avg{T}^i}
\) for all $i$. Let us write $b_i = b(\avg{T}^i,x)$, $r_i = r(\avg{T}^i,x)$. By our previous observations, $b_i \leq (\frac{4}{3})^ib_0$, $r_i \leq (\frac{1}{2})^ir_0$, whence for all $g$ in $G^x$, by \eqref{eqn:12B.12.5a} and \eqref{eqn:12B.12.9b} in combination with the inequality $\max\{1,\lvert\sigma(g,g^{-1})^{-1}\rvert\}^{-1} \leq 1$, and on account of \eqref{eqn:almost},
\begin{multline*}
 \lVert\avg{T}^{i+1}(g) - \avg{T}^i(g)\rVert \leq \integral_{th=sg} c(sh)\lVert\sigma(g,h)\avg{T}^i(gh)\avg{T}^i(h)^{-1} - \avg{T}^i(g)\rVert \der h \leq {}\\*
	\frac{1}{1 - r_i} \cdot b_ir_i \leq \frac{1}{1 - r_0} \cdot (\tfrac{4}{3})^i(\tfrac{1}{2})^ib_0r_0 \leq (\tfrac{2}{3})^i\tfrac{1}{3}.
\end{multline*}
Lemma \ref{lem:limit} then implies that the sequence $\avg{T}^0$,~$\avg{T}^1$,~$\avg{T}^2$,~$\dotsc$ converges uniformly to a limiting morphism $\avg{T}^\infty: s^*E \to t^*E$ in $\BB(G)$. Since $\avg{T}^\infty(g) = \lim \avg{T}^i(g)$ in the Banach space $\L(E_{sg},E_{tg})$ for every $g$ and since $\avg{T}^i$ is unital for every $i \geq 1$, we have $\avg{T}^\infty(1x) = \lim\avg{T}^i(1x) = \id$ for all $x$ in $X$. Moreover $\sigma(g,h)\avg{T}^\infty(gh) = \avg{T}^\infty(g)\avg{T}^\infty(h)$ for every composable pair $(g,h) \in G \ftimes{s}{t} G$, because, by continuity,
\begin{multline*}
 \max\{1,\lvert\sigma(g,g^{-1})^{-1}\rvert\}\lVert\sigma(g,h)\avg{T}^\infty(gh) - \avg{T}^\infty(g)\avg{T}^\infty(h)\rVert = {}\\*
	\lim_i\max\{1,\lvert\sigma(g,g^{-1})^{-1}\rvert\}\lVert\sigma(g,h)\avg{T}^i(gh) - \avg{T}^i(g)\avg{T}^i(h)\rVert \leq \lim r_i \leq \lim 2^{-i}r_0 = 0.
\end{multline*}
Thus $R = \avg{T}^\infty$ is a $\sigma$-representation, and the first claim in our theorem is proven.

Next, writing $B = s^*A \to G$, whenever $T \in \Gamma\bigl(G;\L_B(s^*E,t^*E)\bigr)$ is uniformly continuous we may interpret our averaging formula \eqref{eqn:mean} as an instance of the Haar integration functional \eqref{eqn:Haar} with “parameters” in $Y = G \xto{s} X$ and with “coefficients” in $\L_B(s^*E,t^*E) \to G$ and hence as defining a new continuous cross-section $\avg{T}$ of $\L_B(s^*E,t^*E) \to G$. So, in this case, $\avg{T}$ is itself uniformly continuous, and the sequence of averaging iterates of $T$ is one of continuous cross-sections of $\L_B(s^*E,t^*E) \to G$ converging uniformly in norm. On account of \cite[p.~102, Prop.~10.3]{Fel77}, the limiting cross-section of any such sequence must be continuous: the second claim in our theorem is proven too.

\section{Inverse and direct images of unitary representations}\label{sec:images}

In this section we work out what might rightfully be called “Morita invariance theory” for projective representations of locally compact groupoids on Banach bundles, in a form suitable for the applications to be discussed in the next two sections. It turns out that the theory in question does not run so smoothly unless we limit its scope to representations operating by isometric or unitary linear transformations, so let us begin with a quick review of the latter.

\paragraph*{Isometric and unitary representations.} Let $R: s^*E \simto t^*E$ be a $\sigma$-representation of the locally compact groupoid $G \tto X$ on a Banach $A$-module bundle $E \to X$, where $\sigma$ is a multiplier for $G \tto X$ with values in a Banach algebra $G$-bundle $A \to X$. Provided $\sigma$ is \emph{isometric} i.e.~satisfies the condition \(%
	\lvert\sigma(g,h)\rvert = \lvert\sigma(g,h)^{-1}\rvert = 1
\), we refer to $R$ as an \emph{isometric} $\sigma$-representation, or as a $\sigma$-representation \emph{by isometries}, whenever the operators $R(g): E_{sg} \simto E_{tg}$ are norm preserving in the sense that \(%
	\lvert R(g)e\rvert = \lvert e\rvert
\). We say that two such $\sigma$-representations $R$,\ $S$ are \emph{isometrically equivalent}, and write $R \simeq S$, whenever they are intertwined by some ($A$-linear) isomorphism in $\BB_A(X)$ which is itself norm preserving.

Group representations on Banach spaces do not normally behave very well \cite[§8]{Mac63} and in order to set up a workable theory it is convenient to confine attention to unitary representations on Hilbert spaces. The same idea carries over into the present more general context.

Recall that a \emph{Banach\/ $\ast$-algebra} is a Banach algebra $\Ban{A}$ on which some isometric real-linear involution $a \mapsto a^*$ has been defined satisfying $1^* = 1$ and $(ab)^* = b^*a^*$. The basic examples to keep in mind are $\Ban{A} = \R$,~$\C$,~and $\mathbb{H}$; in the real case, $a^* = a$, in the complex case, $(a + ib)^* = a - ib$, and in the quaternionic case, $(a + ib + jc + kd)^* = a - ib -jc -kd$. There is an evident corresponding notion of \emph{Banach\/ $\ast$-algebra bundle}, $A \to X$. A \emph{Hilbert\/ $\Ban{A}$-module} for a Banach $\ast$-algebra $\Ban{A}$ is a Banach $\Ban{A}$-module $\Ban{E}$ equipped with an \emph{$\Ban{A}$-valued inner product} i.e.~a real-bilinear map $\langle\blank,\blank\rangle: \Ban{E} \times \Ban{E} \to \Ban{A}$ satisfying \(%
	\langle ae,f\rangle = a\langle e,f\rangle\), \(%
	\langle e,f\rangle^* = \langle f,e\rangle\), \(%
	\lvert e\rvert^2 = \lvert\langle e,e\rangle\lvert\), and \(%
	\langle e,e\rangle \geq 0
\) (meaning $\langle e,e\rangle$ can be written in the form $a^*a$) for all $a \in \Ban{A}$, $e$,~$f \in \Ban{E}$ \cite{Fel77,Kum98}. Let $E \to X$ be a Banach $A$-module bundle for a Banach $\ast$-algebra bundle $A \to X$. We call $E \to X$ a \emph{Hilbert\/ $A$-module bundle} if its norm function is given via the assignment of a continuously varying family of inner products $\langle\blank,\blank\rangle_x: E_x \times E_x \to A_x$. We shall write $\HB_A(X)$ for the category of all Hilbert $A$-module bundles over $X$.

Let $R: s^*E \simto t^*E$ be a $\sigma$-representation of $G \tto X$ on a Hilbert $A$-module bundle $E \to X$, where $A \to X$ is a Banach $\ast$-algebra $G$-bundle and $\sigma$ is an $A$-valued multiplier for $G \tto X$ which is \emph{unitary} in the sense that \(%
	\sigma(g,h)^{-1} = \sigma(g,h)^*
\). We call $R$ a \emph{unitary} $\sigma$-representation if every operator $R(g): E_{sg} \simto E_{tg}$ preserves the inner product in other words satisfies the condition \(%
	\langle R(g)e,R(g)f\rangle_{tg} = g \cdot \langle e,f\rangle_{sg}
\). We say that two such representations $R$,\ $S$ are \emph{unitarily equivalent}, in symbols, $R \simeq S$, if they are intertwined by some isometric equivalence that also preserves the inner product. It is not hard to see that in the real, complex, or quaternionic case the notion of an isometric $\sigma$-representation or equivalence coincides with that of a unitary $\sigma$-representation or equivalence; to some extent, this mitigates the potential ambiguity caused by our use of the same symbol $\simeq$ to express either isometric or unitary equivalence; in other situations, the intended interpretation may vary depending on the context.

\paragraph*{The general setup.} We shall be dealing with the following situation\spacefactor3000: (i)~$G \tto X$,\ $H \tto Y$ are locally compact groupoids and $\phi: H \to G$ is a continuous homomorphism covering a map $Y \to X$ which we shall also denote by $\phi$. (ii)~$A \to X$ is a Banach algebra $G$-bundle, $B \to Y$ is a Banach algebra $H$-bundle, and $\chi: B \to A$ is a homomorphism of Banach algebra bundles that covers $\phi: Y \to X$ and is both \emph{isometric}, \(%
	\lvert\chi(b)\rvert = \lvert b\rvert
\), and \emph{$\phi$-equivariant}, \(%
	\chi(hb) = \phi(h)\chi(b)
\). (iii)~$\sigma$ is an isometric $A$-valued multiplier for $G \tto X$, $\tau$ is an isometric $B$-valued multiplier for $H \tto Y$, and the two are \emph{compatible} via $\phi$,\ $\chi$ in the sense that \(%
	\chi\bigl(\tau(h,k)\bigr) = \sigma\bigl(\phi(h),\phi(k)\bigr)
\). We shall be interested in the operation of pulling back $\sigma$-representations of $G \tto X$ to $\tau$-representations of $H \tto Y$ and, more importantly, in that of pushing forward $\tau$-representations to $\sigma$-representations.

We start with the easier, and more natural, direction. Let $R: s^*E \simto t^*E$ be an arbitrary $\sigma$-representation of $G \tto X$ on a Banach $A$-module bundle $E \to X$. The pullback Banach bundle, $F = \phi^*E \to Y$, becomes a Banach $B$-module bundle under the operation \(%
	b(y,e) = \bigl(y,\chi(b)e\bigr)
\). (Note that we already need the fact that $\chi$ is isometric here in order to keep in line with our overall conventions about Banach module bundles.) By definition the \emph{inverse image}, or \emph{pullback}, of $R$ along $\phi$ is given by \[%
	S: s^*F \simeq \phi^*s^*E \xto{\phi^*R} \phi^*t^*E \simeq t^*F \text{,\quad equivalently,}\quad
	S(h)(sh,e) = \bigl(th,R(\phi h)e\bigr).
\] It is clear that the morphism $S$ in $\BB(H)$ thus obtained is indeed a $\tau$-representation of $H \tto Y$ on the Banach $B$-module bundle $F \to Y$: it is $B$-linear as a consequence of the $\phi$-equivariance of $\chi$, and it satisfies \eqref{eqn:multiplicative} because of our hypothesis about how $\sigma$ and $\tau$ relate. We shall normally write $\phi^*R$ for the inverse image $\tau$-representation $S$. Its construction extends to intertwiners of $\sigma$-representations and therefore gives rise to a canonical functor from (isometric) $\sigma$-\ to (isometric) $\tau$-representations.

Let us now consider the other direction. Ideally, we would like to assign each $\tau$-representation $S$ of $H \tto Y$ a corresponding \emph{direct image}, or \emph{pushforward}, $\sigma$-representation $\phi_*S$ of $G \tto X$ canonically. We would also like the operation $S \mapsto \phi_*S$ to behave functorially and moreover so that $\phi_*\phi^*R \simeq R$, $\phi^*\phi_*S \simeq S$ naturally in $R$,\ $S$. Unfortunately is is unclear how to do this unless we ask that $S$ be an \emph{isometric} $\tau$-representation, as well as requiring that $\phi$,\ $\chi$ enjoy the following three properties:
\begin{enumerate}
\def\labelenumi{\upshape PF\arabic{enumi}.}
 \item Given any continuous maps $g: U \to G$, $y$,~$z: U \to Y$ such that $sg = \phi y$ and $tg = \phi z$, there is a unique continuous map $h: U \to H$ such that $sh = y$, $th = z$, and $\phi h = g$.
 \item The map $Z := G \ftimes{s}{\phi} Y \xto{q} X$, $(g,y) \mapsto tg$ is a \emph{topological quotient}: it is onto, and a subset of $X$ is open iff so is its inverse image in $Z$.
 \item The (isometric) homomorphism $G \times_X B \to Z \times_X A$, $(g,b) \mapsto \bigl(g,pb;g\chi(b)\bigr)$ of Banach algebra bundles over $Z$ is a (bicontinuous) isomorphism.
\end{enumerate}

\begin{cmts*} It follows from PF1 that $\phi$ must be fully faithful as an abstract functor and from PF2 that it must be essentially surjective. In consequence of Remark \ref{rmks:open}\textit{b} below, as applied to the evident action of $H \tto Y$ on $Z = G \ftimes{s}{\phi} Y$ along its projection to $Y$, the map $q$ in PF2 must actually be an \emph{open} surjection. (A continuous, surjective, and open map is always a topological quotient map but in general a topological quotient map need not be open.) The conjunction of the first two properties PF1–2 constitutes a weakening of the concept of “topological weak equivalence”, which would only be applicable as long as the existence of continuous local cross-sections to $q$ were granted through each point of $Z$. While the latter turns out to be too strong a requirement for our purposes, our weaker hypothesis PF2 happens to be satisfied in the following important cases, which will be of practical interest in the next section. \end{cmts*}

\begin{exms}\label{exms:P2} {\itshape (a)}\spacefactor3000\/ Let us call a topological groupoid $G \tto X$ \emph{transitive}, resp., \emph{locally transitive}, if its anchor map $(t,s): G \to X \times X$ is onto, resp., open. When a locally compact groupoid $G \tto X$ is both transitive and locally so, the inclusion homomorphism of any isotropy group $G(y) \subset G$ satisfies PF1 and PF2 above with $H = G(y)$, and it follows easily from our previous comments that the converse is also true. Let now $G$ be a locally compact group acting transitively from the left on a locally compact Hausdorff space $X$. Local transitivity of the transformation groupoid $G \times X \tto X$ is equivalent to openness of the map $G \to X$, $g \mapsto gy$, or, if we let $H \subset G$ denote the closed subgroup stabilizing $y$, to the continuous bijection $G/H \to X$, $gH \mapsto gy$ being a homeomorphism (by a result of Glimm, the latter map must be so whenever $G$ and $X$ are both second countable \cite[Thm.~1]{Gli61}).

{\itshape (b)}\spacefactor3000\/ If a locally compact groupoid $G \tto X$ is proper [cf.~the text preceding the statement of Theorem \ref{thm:almost}], as well as transitive, it is necessarily also locally transitive, and our considerations in Example \ref{exms:P2}\textit{a} apply. To prove it, all we need to do in virtue of the same considerations is to show that every subset $U \subset X$ for which $t^{-1}(U) \cap G_y$ is open in $G_y$ must be open in $X$. Let us fix $x$ in $U$, and let us consider the directed family of all its compact neighborhoods $C$. The sets $t^{-1}(C) \cap G_y \smallsetminus t^{-1}(U)$ are compact by properness and form a directed family. Their intersection is empty. Hence $t^{-1}(C) \cap G_y \subset t^{-1}(U)$, equivalently by transitivity $C \subset U$, for at least one $C$. Since $x$ was arbitrary, our claim is proven. In connection with our previous comments concerning PF2, we point out that in spite of our present assumptions there are no reasons why the open map $G_y \to X$, $g \mapsto tg$ should admit continuous local cross-sections. Going back to the case [considered in Example \ref{exms:P2}\textit{a}] of a locally compact group $G$ acting transitively on a locally compact Hausdorff space $X$, the problem of their existence translates into “given $H \subset G$ a closed subgroup, does the quotient projection $G \to G/H$ admit continuous local cross-sections?” The circumstance that it does is occasionally expressed by saying that \emph{$H$ has a local cross-section}. Now, a well-known theorem of Mostert \cite{Mos53} says that $H$ has a local cross-section whenever $G$ is separable, metrizable, and finite-dimensional, equivalently, $G$ is a projective limit of Lie groups all having the same dimension; another theorem of Gleason \cite{Gle50} says that $H$ has a local cross-section whenever it is isomorphic to a compact Lie group; on the other hand, according to Steenrod \cite[§7.5, p.~33]{Ste51}, “An unpublished example of Hanner provides a compact abelian group of infinite dimension and a closed $0$-dimensional subgroup without a local cross-section.”

{\itshape (c)}\spacefactor3000\/ Let $G \tto X$ be an arbitrary locally compact groupoid. Let $Y \subset X$ be an open subset such that $GY = X$. Take $H = G \mathbin| Y \tto Y$ (with the subspace topology), and let $\phi$ be the inclusion. Then $H \tto Y$ is locally compact and PF2 is satisfied (as well as obviously PF1) because, by local compactness, $t$ is an open map from $G$ and, hence, $s^{-1}(Y)$, onto $X$. As before, the map $q$ of PF2 is open but may fail to admit continuous local cross-sections. \end{exms}

\begin{cmts*}[\textit{continued}] Our last hypothesis, PF3, is equivalent to the fiberwise injectivity of $\chi$ plus the existence of a continuous map $Z \times_X A \to B$, $(g,y;a) \mapsto a^{g,y}$ covering the projection $Z = G \ftimes{s}{\phi} Y \xto\pr Y$ and satisfying \(%
	a = g \cdot \chi(a^{g,y})
\). Of course $(g,y;a) \mapsto (g,a^{g,y})$ is the inverse of the isomorphism referred to in PF3. For instance, in the situations of the above examples, taking $B = A \mathbin| Y$ and $\chi$ to be the inclusion map, PF3 is automatically satisfied with $a^{g,y} = g^{-1}a$. We shall make tacit use of a few obvious identities: \(%
	{a_1}^{g,y}{a_2}^{g,y} = (a_1a_2)^{g,y}\) (multiplicativity), \(%
	(g_1a)^{g_1g_2,y} = a^{g_2,y}\) (left invariance), \(%
	ha^{g\phi h,sh} = a^{g,th}
\) (right invariance). We shall in addition make use of the following: \end{cmts*}

\begin{rmks}\label{rmks:open} {\itshape (a)}\spacefactor3000\/ Every continuous action $Z \times H \to Z$, $(z,h) \mapsto zh$ of a topological group $H$ on a topological space $Z$ is an open map because the continuous involution of $Z \times H$ given by $(z,h) \mapsto (zh,h^{-1})$ transforms the action into the projection $Z \times H \to Z$, which is open. More generally, let an arbitrary topological groupoid with open target map $H \tto Y$ act from the right on the topological space $Z$ in a continuous fashion along a continuous map $Z \xto{p} Y$. In this case too the action $Z \ftimes{p}{t} H \to Z$, $(z,h) \mapsto zh$ must be an open map: the continuous involution of $Z \ftimes{p}{t} H$ defined as before transforms the action into the projection $Z \ftimes{p}{t} H \xto{\pr} Z$, so we have to make sure the latter is open; since we topologize $Z \ftimes{p}{t} H$ as a subspace of $Z \times H$, we only need to show that for all open $V \subset Z$, $W \subset H$ the image of $V \ftimes{p}{t} W$ under $\pr$ is open in $Z$; now given $z$ in that image, since by hypothesis $t(W) \ni pz$ is open in $Y$, we are able to find $U \ni z$ open inside $V$ such that $p(U) \subset t(W)$ and, therefore, $U = \pr(U \ftimes{p}{t} W) \subset \pr(V \ftimes{p}{t} W)$.

{\itshape (b)}\spacefactor3000\/ Under the same hypotheses as in Remark \ref{rmks:open}\textit{a}, the topological quotient map $q: Z \to Z/H$~(= the space of $H$-orbits $zH$) is open; for if $V \subset Z$ is open, then $q^{-1}\bigl(q(V)\bigr) = VH$ is the invariant saturation of $V$, that is, the image of $V \ftimes{p}{t} H$ under the action (which as we have seen is open), and so $q(V)$ is open in $Z/H$ by definition of the quotient topology.

{\itshape (c)}\spacefactor3000\/ Let $X' \xto{p} X$ and $Y' \xto{q} Y$ be continuous maps of spaces $X' \to B \leftarrow Y'$, $X \to B \leftarrow Y$ continuously fibered over a given base $B$. If each one of $p$,\ $q$ is open, or surjective, then obviously so must be $X' \times_B Y' \xto{p\times q} X \times_B Y$. \end{rmks}

\paragraph*{The direct image construction.} We are done with preliminaries now and ready to explain how to obtain a $\sigma$-representation of $G \tto X$ out of any given $\tau$-representation of $H \tto Y$, say, $S: s^*F \simto t^*F$ operating in a Banach $B$-module bundle $F \to Y$. Remember that we require $S$ to be \emph{isometric}: \(%
	\lvert S(h)f\rvert = \lvert f\rvert
\) for all $h \in H$, $f \in F_{sh}$.

We first pull the Banach bundle $F \xto{p} Y$ back along the continuous map $G \ftimes{s}{\phi} Y \xto{\pr} Y$; the pullback bundle projection \(%
	\pr^*F \simeq G \ftimes{s}{\phi p} F \xto{\id\times p} G \ftimes{s}{\phi} Y = Z
\) is an open map. We have a continuous right action of $H \tto Y$ on $Z$ along $Z \xto{\pr} Y$ given by \(%
	(g,th) \cdot h = (g\phi h,sh)
\), and we claim our next formula defines a right $H \tto Y$ action on $\pr^*F$ relative to which $\pr^*F \to Z$ becomes a $H \tto Y$ equivariant map.
\begin{equation}
	(g,f) \cdot h = \bigl(g\phi h,S(h)^{-1}\sigma(g,\phi h)^{g,th}f\bigr)
\label{eqn:direct1}
\end{equation}
Let us verify that our formula does indeed define a groupoid action. The identity $(g,f) \cdot 1pf = (g,f)$ follows from \eqref{eqn:unital} for $S$ and from the normality of $\sigma$. In virtue of the $B$-linearity of $S$, of \eqref{eqn:multiplicative} for $S$, of the cocycle equation \eqref{eqn:2-cocycle} for $\sigma$, and of our hypothesis about how $\sigma$ and $\tau$ relate,
\begin{alignat*}{2}
 \bigl((g,f) \cdot h_1\bigr) \cdot h_2 &
	= \bigl(g\phi h_1\phi h_2,S(h_2)^{-1}\sigma(g\phi h_1,\phi h_2)^{g\phi h_1,th_2}S(h_1)^{-1}\sigma(g,\phi h_1)^{g,th_1}f\bigr) \\ &
	= \bigl(g\phi h_1\phi h_2,S(h_2)^{-1}S(h_1)^{-1}h_1\sigma(g\phi h_1,\phi h_2)^{g\phi h_1,sh_1}\sigma(g,\phi h_1)^{g,th_1}f\bigr) \\ &
	= \bigl(g\phi(h_1h_2),[\tau(h_1,h_2)S(h_1h_2)]^{-1}\sigma(g\phi h_1,\phi h_2)^{g,th_1}\sigma(g,\phi h_1)^{g,th_1}f\bigr) \\ &
	= \bigl(g\phi(h_1h_2),S(h_1h_2)^{-1}\tau(h_1,h_2)^{-1}[g\sigma(\phi h_1,\phi h_2)\sigma(g,\phi h_1\phi h_2)]^{g,th_1}f\bigr) \\ &
	= \bigl(g\phi(h_1h_2),S(h_1h_2)^{-1}\tau(h_1,h_2)^{-1}\tau(h_1,h_2)\sigma(g,\phi h_1\phi h_2)^{g,th_1}f\bigr) \\ &
	= (g,f) \cdot h_1h_2.
\end{alignat*}
This concludes our verification. The bundle projection $\pr^*F \to Z$ descends to a map between the spaces obtained by quotienting by the $H \tto Y$ action, \[%
	E := (\pr^*F)/H \longto Z/H \simeq X, \quad [g,f] \longmapsto tg,
\] which has to be continuous (by the universal property of the quotient topology) and surjective (by PF2). We contend it has to be \emph{open} as well. To see it, since $\pr^*F \to Z$ is open, it will be enough to show $Z \to X$, $(g,y) \mapsto tg$ is open. The latter map is constant along the $H$-orbits and hence factors as $Z \to Z/H \to X$, where the quotient projection $Z \to Z/H$ is open by Remark \ref{rmks:open}\textit{b} above and where $Z/H \to X$ is bijective by PF1 plus PF2 and in fact a homeomorphism by the universal property of the quotient topology.

We proceed to turn this continuous open surjection $E \to X$ into a \emph{normed vector bundle} i.e.~a topological vector bundle endowed with a continuous norm function. Clearly \eqref{eqn:direct1} defines an $H \tto Y$ action on $\pr^*F \to Z$ by fiberwise linear maps and so there exists a unique algebraic vector bundle structure on $E \to X$ for which the quotient projection $\pr^*F \to E$ becomes a fiberwise linear map covering $Z \to X$. From Remark \ref{rmks:open}\textit{c} above plus our observation that every continuous open surjection is a topological quotient map it follows that this algebraic structure $\{+: E \times_X E \to E$,~$\cdot: \R \times E \to E\}$ is continuous and so $E \to X$ is a topological vector bundle. Next, because of our requirement that $S$ be isometric, the standard Banach norm \(%
	\lvert(g,f)\rvert = \lvert f\rvert
\) on $\pr^*F \to Z$ is invariant under the $H \tto Y$ action \eqref{eqn:direct1}, in other words, this action is by linear isometries. There must therefore be a unique norm function, $\lvert\blank\rvert: E \to \R_{\geq 0}$, making $\pr^*F \to E$ a fiberwise isometry. This function is necessarily continuous by the universal property of the quotient topology.

Because of the existence of a continuous norm function, the total space $E$ of our topological vector bundle $E \to X$ must be Hausdorff. Because of the existence of isometric identifications $E_{tg} \simeq F_y$, its fibers must be Banach spaces. Thus, in order to be able to say that $E \to X$ is a Banach bundle, we only need to verify it enjoys the “net property”. Let $e_i$ be any net in $E$ for which $\lvert e_i\rvert \to 0$, $x_i = pe_i \to x \in X$. Let $Z \xto{q} X$, $(g,y) \mapsto tg$ be the \emph{open} continuous surjection referred to in PF2. Given $z \in q^{-1}(x)$, the pairs $(W,i)$ with $W$ an open neighborhood of $z$ in $Z$ and $x_i$ in $q(W)$ form a \emph{directed} poset under the partial ordering \[%
	(W_1,i_1) \geq (W_2,i_2) \text{\quad iff\quad
		$W_1 \subset W_2$ and $i_1 \geq i_2$.}
\] For each $(W,i)$ let us pick $z_{W,i} = (g_{W,i},y_{W,i})$ in $q^{-1}(x_i) \cap W$ and put $e_i = [g_{W,i},f_{W,i}]$. Clearly $z_{W,i} \to z$ in $Z$ (by openness of $q$) and hence $(g_{W,i},f_{W,i}) \to 0z$ in $\pr^*F$ because $\lvert (g_{W,i},f_{W,i})\rvert = \lvert e_i\rvert \to 0$ and because $\pr^*F \to Z$ is a Banach bundle. Since continuous images of convergent nets are convergent, we conclude that $e_i \to 0x$.

A continuous bilinear operation $A \times_X E \to E$ satisfying $\lvert ae\rvert \leq \lvert a\rvert\,\lvert e\rvert$ and turning $E \to X$ into a Banach $A$-module bundle will be furnished by the formula
\begin{equation}
	a[g,f] = [g,a^{g,pf}f]
\label{eqn:direct2}
\end{equation}
once we make sure that the latter is meaningful in other words that setting $a \ast (g,f) = (g,a^{g,pf}f)$ endows $\pr^*F \to Z$ with an $H \tto Y$ equivariant Banach module bundle structure for $q^*A \to Z$ the Banach algebra bundle pullback of $A \to X$ along the map $Z \xto{q} X$ of PF2; now, by the $B$-linearity of $S$ and by the centrality of $\sigma$, we do in fact have
\begin{align*}
 a \ast \bigl((g,f) \cdot h\bigr) &
	= \bigl(g\phi h,a^{g\phi h,sh}S(h)^{-1}\sigma(g,\phi h)^{g,th}f\bigr) \\ &
	= \bigl(g\phi h,S(h)^{-1}ha^{g\phi h,sh}\sigma(g,\phi h)^{g,th}f\bigr) \\ &
	= \bigl(g\phi h,S(h)^{-1}[a\sigma(g,\phi h)]^{g,th}f\bigr) \\ &
	= \bigl(g\phi h,S(h)^{-1}\sigma(g,\phi h)^{g,th}a^{g,th}f\bigr) \\ &
	= \bigl(a \ast (g,f)\bigr) \cdot h.
\end{align*}

We define a $\sigma$-representation $R: s^*E \simto t^*E$ of $G \tto X$ on our Banach $A$-module bundle $E \to X$ by setting
\begin{equation}
	R(g')[g,f] = [g'g,\sigma(g',g)^{g'g,pf}f].
\label{eqn:direct3}
\end{equation}
Clearly this formula will yield a morphism in $\BB(G)$ provided $g' \cdot (g,f) = \bigl(g'g,\sigma(g',g)^{g'g,pf}f\bigr)$ gives an $H \tto Y$ equivariant left $G \tto X$ operation on $\pr^*F \to Z$, which it does in consequence of the $B$-linearity of $S$ and of the cocycle equation \eqref{eqn:2-cocycle} for $\sigma$:
\begin{align*}
 g' \cdot \bigl((g,f) \cdot h\bigr) &
	= \bigl(g'g\phi h,\sigma(g',g\phi h)^{g'g\phi h,sh}S(h)^{-1}\sigma(g,\phi h)^{g,th}f\bigr) \\ &
	= \bigl(g'g\phi h,S(h)^{-1}h\sigma(g',g\phi h)^{g'g\phi h,sh}\sigma(g,\phi h)^{g,th}f\bigr) \\ &
	= \bigl(g'g\phi h,S(h)^{-1}[\sigma(g',g\phi h)g'\sigma(g,\phi h)]^{g'g,th}f\bigr) \\ &
	= \bigl(g'g\phi h,S(h)^{-1}\sigma(g'g,\phi h)^{g'g,th}\sigma(g',g)^{g'g,pf}f\bigr) \\ &
	= \bigl(g' \cdot (g,f)\bigr) \cdot h.
\end{align*}
The morphism $R: s^*E \to t^*E \in \BB(G)$ thus obtained is $A$-linear as a result of the centrality of $\sigma$:
\begin{align*}
 g' \cdot \bigl(a \ast (g,f)\bigr) &
	= \bigl(g'g,\sigma(g',g)^{g'g,pf}a^{g,pf}f\bigr) \\ &
	= \bigl(g'g,[\sigma(g',g)g'a]^{g'g,pf}f\bigr) \\ &
	= \bigl(g'g,[g'a]^{g'g,pf}\sigma(g',g)^{g'g,pf}f\bigr) \\ &
	= g'a \ast \bigl(g' \cdot (g,f)\bigr).
\end{align*}
It satisfies \eqref{eqn:unital} because of the normality of $\sigma$, as well as satisfying \eqref{eqn:multiplicative} substantially by reason of the cocycle equation \eqref{eqn:2-cocycle} for $\sigma$:
\begin{align*}
 g'' \cdot \bigl(g' \cdot (g,f)\bigr) &
	= \bigl(g''g'g,\sigma(g'',g'g)^{g''g'g,pf}\sigma(g',g)^{g'g,pf}f\bigr) \\ &
	= \bigl(g''g'g,[\sigma(g'',g'g)g''\sigma(g',g)]^{g''g'g,pf}f\bigr) \\ &
	= \bigl(g''g'g,\sigma(g'',g')^{g''g'g,pf}\sigma(g''g',g)^{g''g'g,pf}f\bigr) \\ &
	= \sigma(g'',g') \ast \bigl(g''g' \cdot (g,f)\bigr).
\end{align*}
Our assumptions imply in addition that $R: s^*E \simto t^*E$ is an isometric $\sigma$-representation. As a matter of notation, we shall be writing $E = \phi_*F$, $R = \phi_*S$.

Let $M: F \to F'$ be an intertwiner of isometric $\tau$-representations of $H \tto Y$. In view of \eqref{eqn:intertwiner} and of the $B$-linearity of $M$, setting \(%
	L_{tg}[g,f] = [g,M_{pf}f]
\) gives a well-defined fiberwise linear map $L = \phi_*M: \phi_*F \to \phi_*F'$ and, in fact, an intertwiner of isometric $\sigma$-representations (its continuity being an immediate consequence of the universal property of the quotient topology). The correspondence $S \mapsto \phi_*S$ thus defined from isometric $\tau$-representations of $H \tto Y$ to isometric $\sigma$-representations of $G \tto X$ is functorial. We leave it to the reader to explicitly write down the prospected natural isometric equivalences $\phi_*\phi^*R \simeq R$, $\phi^*\phi_*S \simeq S$.

On top of our overall assumptions let us now suppose that $A \to X$, $B \to Y$ are equivariant Banach $\ast$-algebra bundles, $\chi$ is a $\ast$-homomorphism, and the two multipliers $\sigma$,\ $\tau$ are unitary. The reader will have no difficulties implementing the obvious changes to the above when $F \to Y$ is a \emph{Hilbert} $B$-module bundle on which $S: s^*F \simto t^*F$ operates \emph{unitarily}. That being the case, it will be possible to endow $E \to X$ with a canonical Hilbert $A$-module bundle structure with respect to which $R$ will operate unitarily, and $\phi_*\phi^*R \simeq R$, $\phi^*\phi_*S \simeq S$ will be natural unitary equivalences.

\begin{cmts*} Inasmuch as nonprojective unitary representations of transitive locally compact groupoids are concerned, a precursor of the above direct image construction appears in Seda's thesis \cite[p.~158, Thm.~5.4.12]{Sed74}. However, Seda's definition of “Hilbert bundle” is somewhat weaker than ours: not only may a Hilbert bundle in his sense fail to have continuous local cross-sections, it may not even be open as a map. This explains why we have to work harder in order to show that our construction produces a Banach or Hilbert bundle. Also, in [loc.~cit.], the uniqueness and local triviality of the direct image “Hilbert bundle” are established on condition that the groupoid itself be locally trivial; for the reasons discussed in our Examples \ref{exms:P2}\textit{a}–\textit{c}, we cannot afford any hypothesis to that effect. A different approach to the Morita theory of groupoid representations, alternative to both Seda's and ours, is described in \cite[arXiv version, §3.6]{Bos11}. It appears that in order to make sense of his variant of the direct image construction Bos has to postulate the existence of “Dirac approximate identities”---some kind of “distributional cross-sections” to the map mentioned in our condition PF2 above. The absence of such technical complications in our treatment looks like a clear advantage of the Banach bundle viewpoint adopted in this paper. \end{cmts*}

\section{The local extension theorem}\label{sec:local}

In this section we specialize down to the case of multipliers for $G \tto X$ with values in a Banach algebra $\Ban{A}$, i.e., in the trivial Banach algebra $G$-bundle $X \times \Ban{A} \xto{\pr} X$, $g \cdot (sg,a) = (tg,a)$; in practice our interest lies mainly in the complex case, $\Ban{A} = \C$, but we do not want to exclude other possible applications. We let $\End_{\Ban{A}}(\Ban{E}) \subset \L(\Ban{E},\Ban{E})$ denote the Banach algebra of all bounded $\Ban{A}$-linear operators $T$ in a Banach $\Ban{A}$-module $\Ban{E}$, and $\GL_{\Ban{A}}(\Ban{E})$ the group of all invertible such $T$. We say that $T \in \GL_{\Ban{A}}(\Ban{E})$ is an \emph{isometry}, and write $T \in \GI_{\Ban{A}}(\Ban{E})$~(= the \emph{Banach} or \emph{isometry group} of $\Ban{E}$), if it is norm preserving: $\lvert Te\rvert = \lvert e\rvert$ for all $e \in \Ban{E}$. When $\Ban{A}$ is a Banach $\ast$-algebra and $\Ban{E}$ is a Hilbert $\Ban{A}$-module [cf.~the beginning of section \ref{sec:images}], we have the subgroup $\GU_{\Ban{A}}(\Ban{E}) \subset \GI_{\Ban{A}}(\Ban{E})$ of all \emph{unitary}, i.e., inner-product preserving, $\Ban{A}$-linear automorphisms: $\langle Te,Tf\rangle = \langle e,f\rangle$ for all $e$,~$f \in \Ban{E}$. We call $\GU_{\Ban{A}}(\Ban{E})$ the \emph{Hilbert} or \emph{unitary group} of $\Ban{E}$. Our next theorem is the main result of this paper.

\begin{thm}\label{thm:local} Let\/ $G \tto X$ be a proper groupoid\/ {\upshape [cf.~section \ref{sec:almost}]}, let\/ $\Ban{A}$ be a Banach algebra, and let\/ $\sigma$ be an\/ $\Ban{A}$-valued multiplier for\/ $G \tto X$ satisfying\/ $\lvert\sigma(g,h)^{-1}\rvert = 1/\lvert\sigma(g,h)\rvert$. For any uniformly continuous\/ $\sigma$-representation\/ $S: G(x) \to \GL_{\Ban{A}}(\Ban{E})$ of the isotropy group of\/ $G \tto X$ at\/ $x \in X$ on a Banach\/ $\Ban{A}$-module\/ $\Ban{E}$, in a sufficiently small open neighborhood\/ $V$ of\/ $x$ there exists some uniformly continuous\/ $\sigma$-representation\/ $G \mathbin| V \to \GL_{\Ban{A}}(\Ban{E})$ of\/ $G \mathbin| V \tto V$ on\/ $\Ban{E}$ which extends\/ $S$. \end{thm}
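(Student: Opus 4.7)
My plan is to combine an ad hoc local extension of $S$ to a pseudorepresentation of $G \mathbin| V$ with an invocation of Theorem~\ref{thm:almost} to produce a nearby genuine $\sigma$-representation, and then to use the compactness of $G(x)$ to adjust that representation by an inner conjugation so it actually extends $S$ on the nose. Throughout, properness of $G$ is what enables every step: it makes small restrictions $G \mathbin| V$ themselves proper, forces them into any prescribed neighborhood of $G(x)$, and makes $G(x)$ a compact group carrying a normalized Haar measure.

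\textbf{Local pseudorepresentation.} Choose a precompact open neighborhood $V_0 \ni x$. Since $G$ is proper, the anchor $(t,s)$ is a proper map, so $G \mathbin| V_0$ has compact closure in $G$ and the restricted groupoid $G \mathbin| V_0 \tto V_0$ is itself a proper locally compact groupoid for which any positive continuous function on $V_0$ serves as a normalizing function. Next, build a continuous (in operator norm) $\Ban{A}$-linear map $T \colon G \mathbin| V_0 \to \End_{\Ban{A}}(\Ban{E})$ that is uniformly close to $S$ on $G(x)$ by the following device: cover the compact set $G(x)$ by finitely many small open subsets $W_i$ of $G \mathbin| V_0$ on each of which $S$ takes values within $\epsilon$ of a reference $S(k_i)$ for some $k_i \in W_i \cap G(x)$, pick a continuous partition of unity $\{\rho_i\}$ subordinate to $\{W_i\}$ on a neighborhood of $G(x)$ in $G \mathbin| V_0$, set $T := \sum_i \rho_i \cdot S(k_i)$ there, and extend continuously outside. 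Openness of $\GL_{\Ban{A}}(\Ban{E})$ inside $\End_{\Ban{A}}(\Ban{E})$, together with one final shrinking of $V_0$, guarantees that $T$ lands in $\GL_{\Ban{A}}(\Ban{E})$; $T$ is then a uniformly continuous $\Ban{A}$-linear pseudorepresentation of $G \mathbin| V_0 \tto V_0$ on $\Ban{E}$.

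\textbf{Averaging to a genuine $\sigma$-representation.} By properness, for every open neighborhood $U$ of $G(x)$ in $G$ there is an open $V \subset V_0$ with $G \mathbin| V \subset U$. The sups defining $r(T,y)$ and $b(T,y)$ then range over subsets of the compact closure of $G \mathbin| V_0$ and are uniformly controlled by the operator-norm continuity of $T$ and $\sigma$. Since $T$ agrees with the genuine $\sigma$-representation $S$ up to $\epsilon$ on $G(x)$, by taking $V$ and $\epsilon$ small enough we make $r(T,y)$ arbitrarily small uniformly in $y \in V$ while keeping $b(T,y)$ uniformly bounded; the inequality \eqref{eqn:almost} then holds throughout $V$, and $T$ is an almost $\sigma$-representation of the proper groupoid $G \mathbin| V \tto V$. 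Theorem~\ref{thm:almost} now produces a uniformly continuous $\sigma$-representation $R \colon G \mathbin| V \to \GL_{\Ban{A}}(\Ban{E})$ with $\lVert R(g) - T(g)\rVert \leq 4 b(T,tg) r(T,tg)$ for all $g \in G \mathbin| V$; in particular $R|_{G(x)}$ is uniformly close to $S$.

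\textbf{Final adjustment and main obstacle.} With $\der k$ denoting normalized Haar measure on the compact group $G(x)$, put
\begin{equation*}
L := \int_{G(x)} R(k)\, S(k)^{-1} \,\der k \; \in \; \End_{\Ban{A}}(\Ban{E}).
\end{equation*}
A short computation using the $\sigma$-representation identities for both $R$ and $S$, the centrality of the values of $\sigma$, the $\Ban{A}$-linearity of $R$, and the left-invariance of $\der k$ yields $R(g) L = L S(g)$ for every $g \in G(x)$; since $R$ is close to $S$ on $G(x)$, $L$ is close to $\id$ and hence invertible. The operator $R'(g) := L^{-1} R(g) L$ is then a uniformly continuous $\sigma$-representation of $G \mathbin| V$ on $\Ban{E}$ with $R'|_{G(x)} = S$, as desired. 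The main obstacle in the plan is the almost-representation step: one has to uniformly control the $\sigma$-defect and $\sigma$-bound of $T$ along all orbits of $G \mathbin| V$, and properness is essential here both to collapse $G \mathbin| V$ into a prescribed neighborhood of $G(x)$ and to furnish the compact-closure estimates. The partition-of-unity extension in the first step also deserves some care, since $\End_{\Ban{A}}(\Ban{E})$ may be non-separable and $G$ is not assumed metrizable; the explicit recipe above sidesteps any appeal to a general Tietze-type extension theorem.
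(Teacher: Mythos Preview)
Your overall strategy is sound and does lead to a proof, but it differs in an interesting way from the paper's argument, and there are two small slips worth flagging.

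\textbf{How the paper proceeds.} After first normalizing so that $\sigma$ is isometric (Lemma~\ref{lem:multiplier}), the paper does \emph{not} build a mere $\epsilon$-approximation to $S$ near $G(x)$. Instead it uses the direct image machinery of section~\ref{sec:images} (via Lemma~\ref{lem:direct}) to extend $S$ to an honest uniformly continuous $\sigma$-representation $R$ of the transitive groupoid $G\mathbin|O$ along the whole closed orbit $O=Gx$, then invokes Fell's Tietze theorem to prolong $R$ to a continuous $\End_{\Ban A}(\Ban E)$-valued map $T$ on $G\mathbin|V$. Because $T$ coincides with a genuine $\sigma$-representation along the entire orbit, one has $r(T,x)=0$, and the output of Theorem~\ref{thm:almost} agrees with $R$ on $O$ and hence with $S$ on $G(x)$ \emph{exactly}: no final conjugation is needed.

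\textbf{What your route buys and costs.} Your partition-of-unity construction of $T$ avoids both the direct image construction and Fell's Tietze theorem, which is a genuine simplification. The price is that your $T$ only approximates $S$ on $G(x)$, so $r(T,x)>0$ and Theorem~\ref{thm:almost} returns an $R$ merely close to $S$ there; your averaging intertwiner $L=\int_{G(x)} R(k)S(k)^{-1}\,\der k$ and the conjugation $R'=L^{-1}RL$ are then needed to hit $S$ on the nose. That step is correct: centrality of $\sigma$ and $\Ban A$-linearity give $R(k)S(k)^{-1}\cdot S(gk)R(gk)^{-1}=R(g)\,[R(k)S(k)^{-1}]\,S(g)^{-1}\cdot[R(g)S(g)^{-1}]^{-1}$, hence left-invariance yields $R(g)L=LS(g)$, and $L$ is invertible since it is close to $\id$.

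\textbf{Two inaccuracies.} First, your claim that ``any positive continuous function on $V_0$ serves as a normalizing function'' is false; normalizing functions must satisfy $\int c(sh)\,\der h=1$, and one obtains them by dividing a cutoff function by its fiberwise integral, not by taking an arbitrary positive function. This does not affect your argument, since the existence of cutoff (hence normalizing) functions on $G\mathbin|V$ for small $V$ is all you need. Second, the phrase ``extend continuously outside'' is superfluous and potentially misleading: once the finite partition of unity is set up on the open set $\bigcup W_i \supset G(x)$, you simply shrink $V$ (using properness, exactly as you say) so that $G\mathbin|V\subset\bigcup W_i$; no extension beyond $\bigcup W_i$ is required.
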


The proof will be given presently but first let us make a few comments and record some more or less immediate corollaries of our theorem.

\begin{rmks*} In the differentiable case [see Example (d) at the beginning of section \ref{sec:representations}] the linearization theorem for proper Lie groupoids provides a sufficiently good understanding of the local structure of $G \tto X$ to enable a relatively straightforward resolution of the local extension problem (at least for nonprojective representations): over a small enough slice $Z$ through $x$, the restricted groupoid $G \mathbin| Z \tto Z$ must be isomorphic to the transformation groupoid associated with the canonical linear action of the isotropy group $G(x)$ on the tangent space $T_xZ$, and for any such groupoid the extension problem has an evident solution which can be further prolonged over an open neighborhood of $x$ by means of a standard Morita argument \cite[pp.~833–835 plus Ex.~2.12]{2008b}. On the other hand, when $G \tto X$ is purely topological, the concepts involved in the previous reasoning do not even make sense. We do not know whether in our theorem the hypothesis that the isotropy representation be uniformly continuous can be done away with: we use this hypothesis crucially in the course of our proof on at least two distinct occasions. Likewise, we ignore whether for $S: G(x) \to \GI_{\Ban{A}}(\Ban{E})$ operating by isometries there is always some local extension which also operates by isometries; nevertheless, in the Hilbert case, we are able to say the following: \end{rmks*}

\begin{cor}\label{thm:local*} Let\/ $\Ban{A} = \R$,~$\C$,~or\/ $\mathbb{H}$, and let\/ $\sigma$ be a unitary\/ $\Ban{A}$-valued multiplier for the proper groupoid\/ $G \tto X$. Let\/ $S: G(x) \to \GU_{\Ban{A}}(\Ban{E})$ be a uniformly continuous unitary\/ $\sigma$-representation of the isotropy group of\/ $G \tto X$ at\/ $x \in X$ on a Hilbert\/ $\Ban{A}$-module\/ $\Ban{E}$. Over an open neighborhood\/ $V$ of\/ $x$, there is a uniformly continuous unitary\/ $\sigma$-representation\/ $G \mathbin| V \to \GU_{\Ban{A}}(\Ban{E})$ extending\/ $S$. \end{cor}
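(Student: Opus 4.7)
The plan is to bootstrap from Theorem \ref{thm:local}: first obtain a uniformly continuous but \emph{not necessarily unitary} local extension $R$ of $S$, then unitarize $R$ by conjugating with the square root of a Haar-averaged change of inner product. The second step works precisely for $\Ban A\in\{\R,\C,\mathbb H\}$ because in those cases central unitary scalars act trivially on $\Ban A$-valued inner-product values via $\sigma\langle\cdot,\cdot\rangle\sigma^{*}=\sigma\sigma^{*}\langle\cdot,\cdot\rangle=\langle\cdot,\cdot\rangle$.

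First apply Theorem \ref{thm:local} to obtain a uniformly continuous $\sigma$-representation $R: G\mathbin| V\to\GL_{\Ban A}(\Ban E)$ extending $S$ on an open neighborhood $V$ of $x$. By uniform continuity of $R$ together with $R(1x)=\id$, after shrinking $V$ and fixing a normalizing function $c$ for $G\mathbin| V$ whose support is small enough, I arrange that both $\lVert R(h)\rVert$ and $\lVert R(h)^{-1}\rVert$ are uniformly bounded and close to $1$ on $t^{-1}(V)\cap\supp(c\circ s)$. I then define
\[
  \langle e,f\rangle'_y:=\integral c(sh)\,\langle R(h)^{-1}e,R(h)^{-1}f\rangle\der\mu^{y}(h),\qquad y\in V,\ e,f\in\Ban E,
\]
and let $\Phi_y\in\End_{\Ban A}(\Ban E)$ be the positive self-adjoint operator with $\langle\Phi_y e,f\rangle=\langle e,f\rangle'_y$. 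Because the operator-valued integrand is norm-continuous in $h$ (by uniform continuity of $R$), averaging against the continuous Haar system (an instance of the functional \eqref{eqn:Haar}) makes $y\mapsto\Phi_y$ norm-continuous. The uniform bounds on $\lVert R(h)^{\pm1}\rVert$ keep $\Phi_y$ bounded above and below by positive multiples of $\id$, and continuous functional calculus delivers a norm-continuous family $T_y:=\Phi_y^{-1/2}\in\GL_{\Ban A}(\Ban E)$.

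The crucial step is $R$-invariance of $\langle,\rangle'$. By left invariance of Haar, $\langle R(g)e,R(g)f\rangle'_{tg}$ rewrites as an integral over $k\in G^{sg}$ with integrand $c(sk)\langle R(gk)^{-1}R(g)e,R(gk)^{-1}R(g)f\rangle$; from $\sigma(g,k)R(gk)=R(g)R(k)$ one gets $R(gk)^{-1}R(g)=\sigma(g,k)R(k)^{-1}$, and the scalar cancellation recalled above reduces the integrand to $c(sk)\langle R(k)^{-1}e,R(k)^{-1}f\rangle$, yielding $\langle e,f\rangle'_{sg}$. Consequently $R'(g):=T_{tg}^{-1}R(g)T_{sg}$ preserves $\langle,\rangle$ and so lies in $\GU_{\Ban A}(\Ban E)$. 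It satisfies $\sigma(g,h)R'(gh)=R'(g)R'(h)$ because the $T$'s, being $\Ban A$-linear, commute with the central $\Ban A$-scalar $\sigma(g,h)$, and it inherits uniform continuity from $R$ and $T$. Finally $R'$ extends $S$: both $\langle,\rangle$ and $\langle,\rangle'_{x}$ are $S$-invariant (the first since $S$ is unitary; the second by the same averaging calculation carried out with $g\in G(x)$), so $\Phi_x$ lies in the commutant of $S(G(x))$, hence so does $T_x=\Phi_x^{-1/2}$, and therefore $R'(g)=T_x^{-1}S(g)T_x=S(g)$ for all $g\in G(x)$.

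The main obstacle I anticipate is purely technical: propagating operator-norm (not merely strong) continuity of $y\mapsto\Phi_y$ and of $y\mapsto T_y$ through the averaging and the functional calculus, while preserving the $\Ban A$-module structure. For $\R$ and $\C$ this is standard spectral theory. For $\mathbb H$ one reduces to the underlying complex Hilbert space structure and observes that commutation with the extra $j$-action is automatically preserved, since every operator entering the construction ($R(h)$, its inverse, its adjoint, the integrand, $\Phi_y$, and its functional calculus) is $\Ban A$-linear by construction.
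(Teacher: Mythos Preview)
Your proof is correct and follows essentially the same route as the paper's: apply Theorem \ref{thm:local} to get a nonunitary uniformly continuous extension, Haar-average the inner product so that the extension becomes unitary for the new product, and then conjugate back to the original product. The paper phrases the last step via the polar decomposition $I_v = U_vP_v$ of the identity map $(\Ban{E},\langle\,,\,\rangle)\to(\Ban{E},\langle\,,\,\rangle_v)$ and sets $\tilde S(g)=U_{tg}^{-1}S(g)U_{sg}$, whereas you write down the positive operator $\Phi_y$ and take $T_y=\Phi_y^{-1/2}$; since $P_v=\Phi_v^{1/2}$ and $U_v=\Phi_v^{-1/2}$ these are the same conjugation, and your commutant argument for $T_x$ is exactly the paper's uniqueness-of-polar-decomposition argument in different words.
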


\begin{proof}[Proof of the corollary] By Theorem \ref{thm:local} there is some uniformly continuous $S: G \mathbin| V \to \GL_{\Ban{A}}(\Ban{E})$ which at $x$ agrees with the given isotropy representation. Possibly after taking $V$ smaller around $x$ so that $G \mathbin| V \tto V$ itself admits cutoff functions [see section \ref{sec:almost}], we may pretend with no loss of generality that $V$ is all of $X$ and thus $S$ is defined on all of $G$. We can change the $\Ban{A}$-valued inner product $\langle\blank,\blank\rangle$ on $\Ban{E}$ to a parametric family $\{\left\langle\blank,\blank\right\rangle_v$,~$v \in X\}$ of such under which $S$ operates by unitaries on $X \times \Ban{E} \to X$, namely, in the notations introduced after the proof of Lemma \ref{lem:inverse},
\begin{equation*}
	\langle e,f\rangle_v = \integral_{th=v} c(sh)\langle S(h^{-1})e,S(h^{-1})f\rangle \der h.
\end{equation*}
Because of the uniform continuity of $S$, as a family $X \to \L^2(\Ban{E},\Ban{A})$~(= the Banach space of all real-bilinear maps $\Ban{E} \times \Ban{E} \to \Ban{A}$), the new $\Ban{A}$-valued inner products $\langle\blank,\blank\rangle_v$ depend continuously on $v$. By the same token, for every $v$, the norm $\lvert\blank\rvert_v$ associated with $\langle\blank,\blank\rangle_v$ is equivalent to the norm $\lvert\blank\rvert$ associated with $\langle\blank,\blank\rangle$: for some $b > 0$, we have \(%
	\lvert e\rvert^2/b^2 \leq \langle S(h^{-1})e,S(h^{-1})e\rangle \leq b^2\lvert e\rvert^2
\) for all those $h \in G^v$ for which $c(sh) > 0$, whence $\lvert e\rvert/b \leq \lvert e\rvert_v \leq b\lvert e\rvert$.

We proceed to use the “unitarian trick”, that is, letting $I_v = U_vP_v$ for each $v$ in $X$ denote the polar decomposition of the identity map of $\Ban{E} = (\Ban{E},\langle\blank,\blank\rangle)$ to $E_v = (\Ban{E},\langle\blank,\blank\rangle_v)$ into a hilbertian $\Ban{A}$-linear isomorphism $U_v$ of $\Ban{E}$ to $E_v$ and a hermitian positive definite $\Ban{A}$-linear operator $P_v$ on $\Ban{E}$ (this being legitimate since $I_v$ is a topological $\Ban{A}$-linear isomorphism between the two Hilbert $\Ban{A}$-modules $\Ban{E}$ and $E_v$), we set, for every $g$ in $G$, \[%
	\tilde{S}(g) = U_{tg}^{-1} \circ S(g) \circ U_{sh} \in \GU_{\Ban{A}}(\Ban{E}).
\] In view of the continuous dependence of $\langle\blank,\blank\rangle_v$ on $v$ the operators $U_v$,\ $P_v$ must themselves depend continuously on $v$. The only issue is whether $\tilde{S}$ is still an extension of $S$ at $x$, in other words, whether $\tilde{S}(k) = S(k)$ for every $k$ in $G(x)$. Now, since $S(k)$ was unitary relative to the original inner product $\langle\blank,\blank\rangle$, by the uniqueness of polar decomposition we have \(%
	U_x = S(k)^{-1} \circ U_x \circ S(k)\) or, equivalently, \(%
	U_x^{-1} \circ S(k) \circ U_x = S(k)
\), which settles our issue affirmatively. \end{proof}

Before turning to the proof of our local extension theorem we want to discuss how to globalize our last two results. The demonstration of both the local and the global version of Theorem \ref{thm:local} will rely on an auxiliary lemma which we are about to state now but whose proof we shall only give later in the course of the section. By a \emph{Banach $\Ban{A}$-module bundle} we shall mean a Banach $A$-module bundle for the trivial Banach algebra $G$-bundle $A = X \times \Ban{A} \to X$. Given any Banach $\Ban{A}$-module $\Ban{E}$, by a \emph{$\GL_{\Ban{A}}(\Ban{E})$-structure} (resp., \emph{$\GI_{\Ban{A}}(\Ban{E})$-structure}) for a Banach $\Ban{A}$-module bundle $E \to X$ we shall mean a $\GL(\Ban{E})$-structure [cf.~the last part of section \ref{sec:representations}] that contains some atlas consisting exclusively of $\Ban{A}$-linear (resp., $\Ban{A}$-linear and isometric) local trivializations $U \times \Ban{E} \simto E \mathbin| U \in \BB_{\Ban{A}}(U)$. We shall accordingly speak of \emph{$\GL_{\Ban{A}}(\Ban{E})$-structured}, resp., \emph{$\GI_{\Ban{A}}(\Ban{E})$-structured} Banach $\Ban{A}$-module bundles.

\begin{lem}\label{lem:direct} Suppose that a continuous homomorphism\/ $\phi: H \to G$ of locally compact groupoids\/ $H \tto Y$, $G \tto X$ satisfies both assumptions\/ {\upshape PF1} and\/ {\upshape PF2} of section\/ {\upshape \ref{sec:images}}. Let\/ $\sigma$ be an isometric\/ $\Ban{A}$-valued multiplier for\/ $G \tto X$, and let\/ $\phi^*\sigma$ be the\/ $\Ban{A}$-valued multiplier for\/ $H \tto Y$ given by\/ \(%
	\phi^*\sigma(h,k) = \sigma(\phi h,\phi k)
\); the latter automatically satisfies\/ {\upshape PF3} plus our other requirements in section\/ {\upshape \ref{sec:images}}. Let\/ $S: s^*F \simto t^*F \in \BB(H)$ be a uniformly continuous isometric\/ $\phi^*\sigma$-representation of\/ $H \tto Y$ on a\/ $\GL_{\Ban{A}}(\Ban{E})$-structured Banach\/ $\Ban{A}$-module bundle\/ $F \to Y$. Provided either
\begin{enumerate}
\def\labelenumi{\upshape (\roman{enumi})}
 \item the map\/ $G \ftimes{s}{\phi} Y \to X$ of\/ {\upshape PF2} admits continuous local cross-sections or
 \item the locally compact groupoids\/ $H \tto Y$, $G \tto X$ are proper,
\end{enumerate}
there is, for the direct image Banach\/ $\Ban{A}$-module bundle\/ $E = \phi_*F \to X$, a canonical\/ \emph{direct image $\GL_{\Ban{A}}(\Ban{E})$-structure} relative to which both\/ $\phi^*\phi_*F \simeq F \in \BB(Y)$ and\/ $\phi_*S: s^*E \simto t^*E \in \BB(G)$ are uniformly continuous. \end{lem}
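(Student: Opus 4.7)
The plan is to construct, for each $x_0 \in X$, an explicit local $\Ban{A}$-linear trivialization $\Psi: U \times \Ban{E} \simto E \mathbin| U$ of $E = \phi_*F$ starting from an $\Ban{A}$-linear local trivialization $\Phi: V \times \Ban{E} \simto F \mathbin| V$ of $F$ around a chosen $(g_0, y_0) \in q^{-1}(x_0)$ with $y_0 \in V$, where $q: Z = G \ftimes{s}{\phi} Y \to X$ is the map of PF2; then to verify that transitions between any two such trivializations are continuous into $\GL_{\Ban{A}}(\Ban{E})$ in the operator-norm topology. The two hypotheses (i) and (ii) call for two different constructions of $\Psi$, but in both cases the $\GL_{\Ban{A}}(\Ban{E})$-structure of $F$ and, crucially, the uniform continuity of $S$ will be what feeds continuity-in-operator-norm into the transition analysis.

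Under hypothesis (i) I would proceed directly: given a continuous local cross-section $\xi: U \to Z$ of $q$ with $\xi(x_0) = (g_0, y_0)$, write $\xi(x) = (g(x), y(x))$, shrink $U$ so that $y(U) \subset V$, and set $\Psi(x, \nu) := [g(x), \Phi(y(x), \nu)]$. Continuity of $\Psi$ follows from continuity of $\xi$, $\Phi$, and the quotient projection $\pr^*F \to E$, and fiberwise invertibility from the fact that each $[g, \cdot]: F_y \simto E_{tg}$ is an isomorphism. For the transition between two trivializations coming from sections $\xi_1, \xi_2$, PF1 extracts a unique continuous $h: U_1 \cap U_2 \to H$ with $\phi h(x) = g_1(x)^{-1} g_2(x)$, $th(x) = y_1(x)$, $sh(x) = y_2(x)$. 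Via \eqref{eqn:direct1}, the transition into $\GL_{\Ban{A}}(\Ban{E})$ is then the composite of the transition between the two $F$-trivializations (operator-norm continuous by the $\GL_{\Ban{A}}(\Ban{E})$-structure on $F$), multiplication by the $\Ban{A}$-valued continuous function $\sigma(g_1, \phi h)^{g_1, y_1}$, and the operator-norm-continuous assignment $x \mapsto S(h(x))$ (by uniform continuity of $S$ composed with the continuous $h$).

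Under hypothesis (ii), no local section of $q$ need exist, so I would average. Pick a continuous $\psi: Z \to [0, 1]$ whose support inside each $H$-orbit is compact, located within an open $W \subset \pr^{-1}(V)$ around $(g_0, y_0)$, with $\psi(g_0, y_0) = 1$, and use the orbit measure on each fiber $q^{-1}(x)$ induced by the Haar system of the properly acting $H$ to define
\begin{equation*}
	\Psi(x, \nu) := \frac{1}{N(x)}\! \int_{q^{-1}(x)}\! \psi(g, y)\, [g, \Phi(y, \nu)]\, d(g, y), \quad N(x) := \!\int_{q^{-1}(x)}\! \psi(g, y)\, d(g, y),
\end{equation*}
on the open neighborhood $U_0 \ni x_0$ where $N > 0$. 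Parametrizing $q^{-1}(x_0)$ by $h \in H^{y_0}$ and using the identity $[g_0 \phi h, \Phi(sh, \nu)] = [g_0, \sigma(g_0, \phi h)^{g_0, y_0} S(h) \Phi(sh, \nu)]$ coming from \eqref{eqn:direct1}, I expect that, under the identification $E_{x_0} \simeq \Ban{E}$ via $[g_0, \Phi(y_0, \cdot)]$, the operator $\Psi(x_0, \cdot) \in \End_{\Ban{A}}(\Ban{E})$ can be made arbitrarily close to the identity in operator norm by concentrating $\psi$ sufficiently around $(g_0, y_0)$—this is where uniform continuity of $S$ on a neighborhood of $1_{y_0}$ is indispensable, together with continuity of $\sigma$ and of $\Phi$. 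Operator-norm continuity of $x \mapsto \Psi(x, \cdot)$, obtained by the same Haar-integration argument used in the construction of $\avg{T}$ after Lemma \ref{lem:inverse}, then propagates invertibility to a neighborhood $U \ni x_0$, yielding the local trivialization. Transitions between two such averaged trivializations are treated analogously as integrals over $q$-fibers.

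Uniform continuity of the canonical equivalence $\phi^*\phi_*F \simeq F$ and of $\phi_*S$ will then follow by reading off their expressions in the local trivializations just constructed: after the obvious fiberwise identifications, each reduces to a continuous assignment into $\GL_{\Ban{A}}(\Ban{E})$ via the same operator-norm-continuous multiplications involving $\sigma$ and $S$ that appeared in the transition analysis. I anticipate the hard part to be the closeness-to-identity estimate in case (ii)—verifying that $\Psi(x_0, \cdot)$ is close to the identity and not merely close to some other isomorphism requires unwinding the quotient identification coming from \eqref{eqn:direct1} carefully and controlling the full integrand in operator norm; once that is in hand, the rest reduces to Haar-integration bookkeeping of the same flavor as in Section \ref{sec:almost}.
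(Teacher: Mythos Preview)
Your handling of case (i) matches the paper's: both use a continuous local section of $q$ to transport the trivialization of $F$ to one of $E$. The paper phrases this as constructing an $H$-equivariant map $T: q^{-1}(U) \to \GL_{\Ban{A}}(\Ban{E})$ satisfying $S(h)\,T(g\phi h, sh) = \sigma(g,\phi h)\,T(g,th)$, which then descends to a trivialization of $E$; your direct definition of $\Psi$ and your PF1-based transition analysis amount to the same thing.

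For case (ii) you take a genuinely different route. The paper does not average against a bump function on $Z$. Instead it: (a) writes down, on the single closed fiber $q^{-1}(x_0)$, the unique $H$-equivariant map $T_0$ with $T_0(g_0,y_0) = \id$ \emph{exactly}---uniform continuity of $S$ is what makes $T_0$ continuous into $\End_{\Ban{A}}(\Ban{E})$ on that closed set; (b) extends $T_0$ to all of $Z$ by Fell's Tietze theorem; (c) averages the extension over $H$ using a normalizing function, which forces global $H$-equivariance without disturbing the values along $q^{-1}(x_0)$; (d) invokes properness of $q$ (hence of $G$) to find an invariant tube $q^{-1}(U)$ on which the result lands in $\GL_{\Ban{A}}(\Ban{E})$. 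Because $T$ is exactly the identity at $(g_0,y_0)$, invertibility there is immediate and no concentration estimate is needed. Your bump-averaged $\Psi$ is in effect a different construction of an equivariant $T$ (integrating over the full orbit forces equivariance automatically), but one for which $T(g_0,y_0)$ is only \emph{approximately} the identity; you then have to earn invertibility by concentrating $\psi$ and controlling the integrand in operator norm. The paper's route is shorter precisely because the Tietze step replaces that estimate. On the other hand, your approach avoids Tietze and, if one tracks the hypotheses, seems to need only the Haar system on $H$ together with openness of $q$---not the full properness of $G$---so it hints at a slightly more general statement.
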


\begin{cmts*} Whenever a $\sigma$-representation $R: s^*E \simto t^*E \in \BB(G)$ is uniformly continuous relative to a given $\GL_{\Ban{A}}(\Ban{E})$-structure for $E \to X$, its pullback $\phi^*R$ is uniformly continuous relative to the “inverse image” $\GL_{\Ban{A}}(\Ban{E})$-structure for $\phi^*E \to Y$. This is straightforward but the converse direction is much less so. In the differentiable setting the theory of Morita equivalence for representations is not problematic insofar as local triviality is concerned \cite[§4.3]{2008a} because there is a large supply of continuous local cross-sections. On the other hand, as we saw in Example \ref{exms:P2}\textit{b}, in the general locally compact setting we can no longer take (i) for granted; it is here that the properness hypothesis (ii) comes to rescue as a surrogate of (i). As far as we can tell, the direct image of an $\GI_{\Ban{A}}(\Ban{E})$-structure might no longer be itself an $\GI(\Ban{E})$-\ let alone $\GI_{\Ban{A}}(\Ban{E})$-structure, not even locally around $\phi(Y)$. \end{cmts*}

Our proof of either version of Theorem \ref{thm:local} will also rely on a second auxiliary lemma.

\begin{lem}\label{lem:multiplier} Let\/ $\sigma$ be an\/ $\Ban{A}$-valued multiplier for a proper groupoid\/ $G \tto X$. Suppose it satisfies\/ \(%
	\lvert\sigma(g,h)^{-1}\rvert = 1/\lvert\sigma(g,h)\rvert
\). (Automatic when\/ $\Ban{A} = \R$,~$\C$,~or\/ $\mathbb{H}$.) Then, there is a continuous positive real-valued function\/ $\rho$ on\/ $G$ such that the following is an isometric\/ $\Ban{A}$-valued multiplier for\/ $G \tto X$. \[%
	\tilde{\sigma}(g,h) := \frac{\rho(g)\rho(h)}{\rho(gh)}\sigma(g,h)
\] \end{lem}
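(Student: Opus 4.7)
The plan is to reduce the problem to a cohomological one about real-valued additive $2$-cocycles on $G$, and then apply standard averaging on proper groupoids using the normalized Haar integral.

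\textbf{Step 1 (norm multiplicativity).} I would first observe that the hypothesis $\lvert\sigma(g,h)^{-1}\rvert = 1/\lvert\sigma(g,h)\rvert$ forces the scalar function $\lvert\sigma\rvert: G \ftimes{s}{t} G \to \R_{>0}$ to be compatible with the cocycle identity in a multiplicative way. Indeed, for any two invertible elements $a$,\ $b$ in a unital Banach algebra with $\lvert a^{-1}\rvert = 1/\lvert a\rvert$ and $\lvert b^{-1}\rvert = 1/\lvert b\rvert$, the chain
\[
	1 = \lvert 1\rvert \leq \lvert ab\rvert\cdot\lvert(ab)^{-1}\rvert \leq \lvert a\rvert\,\lvert b\rvert\cdot\lvert a^{-1}\rvert\,\lvert b^{-1}\rvert = 1
\]
forces $\lvert ab\rvert = \lvert a\rvert\,\lvert b\rvert$. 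Applying this to both sides of the cocycle identity $\sigma(g,h)\sigma(gh,k) = \sigma(h,k)\sigma(g,hk)$ (the $G$-action on $X \times \Ban{A}$ being trivial) and taking logarithms yields the additive $2$-cocycle identity for $\omega := \log\lvert\sigma\rvert$:
\[
	\omega(g,h) + \omega(gh,k) = \omega(h,k) + \omega(g,hk).
\]
Continuity of $\omega$ is immediate from continuity of $\sigma$ and of the norm function on the Banach algebra bundle.

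\textbf{Step 2 (averaging).} Pick a normalizing cutoff function $c$ for $G \tto X$, which exists by properness, and set
\[
	\lambda(g) := -\int_{tk=sg} c(sk)\,\omega(g,k)\,\der k.
\]
Applying the continuous Haar-integration functional \eqref{eqn:Haar} with $Y = G \xto{s} X$ and with coefficients in the trivial real line bundle, together with the fact that $(g,k) \mapsto c(sk)\omega(g,k)$ is a continuous section with support compact in each $G^{sg}$, shows that $\lambda$ is continuous on $G$. A direct computation using left-invariance of the Haar system under the substitution $k' = hk$, the normalization $\int c(sk)\,\der k = 1$, and the additive cocycle identity of Step 1, yields
\[
	\lambda(gh) - \lambda(g) - \lambda(h) = \omega(g,h).
\]

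\textbf{Step 3 (construction and verification).} Set $\rho(g) := \exp\lambda(g)$, a continuous positive real-valued function on $G$. Then $\tilde{\sigma}(g,h) := [\rho(g)\rho(h)/\rho(gh)]\sigma(g,h)$ automatically has norm $e^{-\omega(g,h)}\lvert\sigma(g,h)\rvert = 1$. Because the prefactor is a positive real scalar, multiplication by it preserves continuity, centrality, invertibility, and the property $\lvert x^{-1}\rvert = 1/\lvert x\rvert$, whence $\lvert\tilde{\sigma}(g,h)^{-1}\rvert = 1$ as well. Normality of $\tilde{\sigma}$ follows from $\rho(1x) = 1$ (a consequence of $\omega(1x,k) = 0$, by normality of $\sigma$) together with normality of $\sigma$ itself; and the $2$-cocycle identity for $\tilde{\sigma}$ is inherited from that of $\sigma$ because the $\rho$-prefactors on both sides of \eqref{eqn:2-cocycle} telescope to $\rho(g)\rho(h)\rho(k)/\rho(ghk)$.

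The main conceptual hurdle is really Step 1: without the hypothesis $\lvert\sigma^{-1}\rvert = 1/\lvert\sigma\rvert$, Banach-algebra submultiplicativity would give only an inequality, so $\omega$ would fail to be an additive cocycle and the averaging argument of Step 2 would collapse. Everything after Step 1 is a routine combination of the standard proper-groupoid averaging trick with the continuity machinery developed in section \ref{sec:representations}.
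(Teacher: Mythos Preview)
Your proof is correct and follows essentially the same route as the paper's: pass to $\log\lvert\sigma\rvert$, use Haar-averaging against a normalizing cutoff to write this additive $2$-cocycle as a coboundary, and exponentiate back. Your Step~1 is a welcome amplification: the paper simply asserts that $\lvert\sigma\rvert$ is a positive real-valued multiplier, whereas you explain how the hypothesis $\lvert\sigma^{-1}\rvert = 1/\lvert\sigma\rvert$ is what forces $\lvert\sigma(g,h)\sigma(gh,k)\rvert$ to factor as $\lvert\sigma(g,h)\rvert\,\lvert\sigma(gh,k)\rvert$ in a general Banach algebra.
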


\begin{proof} The function $\lvert\sigma\rvert > 0$ is a multiplier with positive real values. Setting $\beta(g,h) = \log{}\lvert\sigma(g,h)\rvert$ defines a real-valued \emph{additive} $2$-cocycle on $G$. As this takes values in a vector space, by a standard cohomology vanishing argument it must be cohomologically trivial: there is some real-valued $1$-cochain $\alpha$ on $G$ such that \(%
	\beta(g,h) = \alpha(h) - \alpha(gh) + \alpha(g)
\) for instance \(%
	\alpha(g) = \integral c(sh)\beta(g,h) \der h
\) in the notations of section \ref{sec:almost}. Setting \(%
	\rho(g) = \exp\bigl(-\alpha(g)\bigr)
\) then furnishes the desired function. \end{proof}

By a \emph{local presentation} of a Banach $\Ban{A}$-module bundle $E \to X$ by a Banach $\Ban{A}$-module $\Ban{E}$ over an open subset $U$ of $X$ we mean an $\Ban{A}$-linear morphism \(%
	U \times \Ban{E} \to E \mathbin| U \in \BB_{\Ban{A}}(U)
\) that is fiberwise split onto. We say that $E \to X$ is \emph{locally presentable} (\emph{of type\/ $\Ban{E}$}) if the domains $U$ of such local presentations cover $X$, and that it is (locally presentable) \emph{of finite type} if $\Ban{E}$ is finitely generated as an $\Ban{A}$-module.

\begin{cor}\label{thm:global} Under the assumptions of\/ {\upshape Theorem \ref{thm:local}}, there is a\/ $\sigma$-representation of\/ $G \tto X$ on a locally presentable Banach\/ $\Ban{A}$-module bundle\/ $E \to X$ of type\/ $\Ban{E}$ whose restriction to the isotropy group of\/ $G \tto X$ at\/ $x$ is a\/ $\sigma$-representation\/ $G(x) \to \GL_{\Ban{A}}(E_x)$ equivalent to\/ $S$. \end{cor}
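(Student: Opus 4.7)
The plan is to combine Theorem~\ref{thm:local} with the direct image construction of Lemma~\ref{lem:direct} and then taper the resulting bundle down to zero, via a $G$-invariant cutoff on the orbit space, so as to produce an object defined over all of $X$. Throughout I write $\lvert\blank\rvert$ for norms.

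First, by Lemma~\ref{lem:multiplier} I may without loss of generality assume $\sigma$ is itself isometric: the cohomologous multiplier $\tilde\sigma(g,h) := (\rho(g)\rho(h)/\rho(gh))\sigma(g,h)$ is isometric, and $R \mapsto \tilde R := \rho R$ puts $\sigma$-representations and $\tilde\sigma$-representations on the same Banach $\Ban{A}$-module bundle in bijective correspondence, preserving uniform continuity and the restriction to $G(x)$. Theorem~\ref{thm:local} now supplies an open $V \ni x$ and a uniformly continuous $\sigma$-representation $R : G\mathbin|V \to \GL_{\Ban{A}}(\Ban{E})$ extending $S$. Shrinking $V$, I may suppose $G\mathbin|V \tto V$ admits a normalizing function $c$. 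I then make $R$ operate by isometries via the averaging trick: setting
\[
	\lvert e\rvert_v := \integral_{th=v} c(sh)\lvert R(h^{-1})e\rvert \der h \qquad (v \in V)
\]
yields a continuous family of norms equivalent to $\lvert\blank\rvert$, and $\lvert R(g)e\rvert_{tg} = \lvert e\rvert_{sg}$ follows from the isometric character of $\sigma$ together with the left invariance of Haar. The Banach $\Ban{A}$-module bundle $F := (V \times \Ban{E},\{\lvert\blank\rvert_v\}) \to V$ thus acquires an obvious one-chart $\GL_{\Ban{A}}(\Ban{E})$-structure, with $R$ an isometric, uniformly continuous $\sigma$-representation on it.

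Next, let $X' := GV = t(s^{-1}(V))$, an open $G$-invariant subset of $X$ whose restricted groupoid $G\mathbin|X' \tto X'$ remains proper; by Example~\ref{exms:P2}\textit{c}, the inclusion $\phi : G\mathbin|V \hookrightarrow G\mathbin|X'$ satisfies PF1 and PF2. Lemma~\ref{lem:direct}\textit{(ii)} then yields a uniformly continuous $\sigma$-representation $\phi_*R : s^*E' \simto t^*E'$ of $G\mathbin|X'$ on a Banach $\Ban{A}$-module bundle $E' := \phi_*F \to X'$ with a canonical direct image $\GL_{\Ban{A}}(\Ban{E})$-structure, and the natural equivalence $\phi^*\phi_*F \simeq F$ identifies $E'_x$ with $\Ban{E}$ in a way carrying the $G(x)$-action of $\phi_*R$ back to $S$.

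Finally, since $X/G$ is Hausdorff and locally compact (by properness of $G$, using Remark~\ref{rmks:open}\textit{b} and the closedness of the anchor image), Urysohn-type separation furnishes a continuous $G$-invariant $c_0 : X \to [0,1]$ with $c_0(x) = 1$ and $\supp(c_0) \subset X'$. Define $E \to X$ by taking $E_y := E'_y$ for $y \in \{c_0 > 0\}$, equipped with the \emph{rescaled} fiber norm $c_0(pe)\lvert e\rvert_{E'}$, and $E_y := 0$ for $y$ outside $\{c_0 > 0\}$; topologize $E$ by combining the $E'$-topology over $\{c_0 > 0\}$ with the basic zero-neighborhoods $\{e \in E : pe \in U,\;\lvert e\rvert_E < r\}$ for $U \subset X$ open, $r > 0$. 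Because $c_0$ is $G$-invariant and $\phi_*R$ preserves $\lvert\blank\rvert_{E'}$, the rescaled norm is preserved as well, and $\phi_*R$ extends by the zero morphism over the invariant closed set $\{c_0 = 0\}$ to a $\sigma$-representation of all of $G \tto X$ on $E$. Local presentations of type $\Ban{E}$ are then obtained by scaling the direct image $\GL_{\Ban{A}}(\Ban{E})$-trivializations of $E'$ by the factor $c_0$ over $\{c_0 > 0\}$ and by the zero morphism over $X \smallsetminus \supp(c_0)$; the tapering of $c_0$ at $\partial\supp(c_0)$ is exactly what glues these together continuously. Since $c_0(x) = 1$, the restriction of the constructed representation to $G(x)$ remains equivalent to $S$.

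I expect the last paragraph to be the main obstacle. Gluing $E' \to X'$ with the zero bundle over $X \smallsetminus X'$ so as to produce a Banach bundle over $X$ requires the fibers to decay in norm at $\partial\supp(c_0)$; simultaneously, local presentations of type $\Ban{E}$ at boundary points demand that \emph{some} trivialization of $E'$ be available on a neighborhood of $\{c_0 > 0\}$ inside a small chart around each boundary point, which in turn must vanish continuously at the boundary. The $G$-invariant cutoff $c_0$—whose existence hinges on the Hausdorffness of $X/G$ for proper $G$—is the device that makes both requirements compatible by forcing the fibers of $E$ to shrink continuously to zero at $\partial\supp(c_0)$.
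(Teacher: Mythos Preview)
Your proof is correct and follows essentially the same route as the paper: reduce to isometric $\sigma$ via Lemma~\ref{lem:multiplier}, apply Theorem~\ref{thm:local}, average the norm so that the local extension acts by isometries, push forward along the inclusion $G\mathbin|V \hookrightarrow G\mathbin|GV$ via Lemma~\ref{lem:direct}, and finally extend by zero using a $G$-invariant cutoff supported in $GV$.

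The only substantive difference is in the execution of the last step. Where you describe the glued topology by hand (rescaling the norm by $c_0$ and specifying basic zero-neighbourhoods), the paper instead keeps the original fibre norms, declares the fibres to be zero on $\{c_0 = 0\}$, and invokes Fell's criterion \cite[Prop.~10.4]{Fel77}: the cross-sections $u \mapsto c_0(u)\zeta(u)$ for $\zeta \in \Gamma(GV;E')$ form a linear space with continuous norm functions spanning every fibre, so there is a \emph{unique} Banach-bundle topology making them continuous; continuity of the extended representation then follows from Lemma~\ref{lem:morphism}. This is cleaner than your direct description, which (as you anticipated) is the delicate point---verifying that ``the $E'$-topology over $\{c_0>0\}$ together with the basic zero-neighbourhoods'' really yields a Banach bundle amounts to reproving Fell's criterion in this instance. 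Similarly, for the existence of the $G$-invariant cutoff, the paper simply averages an ordinary Urysohn function against a normalized Haar system rather than passing through the orbit space; this avoids having to argue that $X/G$ is Hausdorff and locally compact.
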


\begin{proof} Lemma \ref{lem:multiplier} tells us that $\tilde{\sigma} = \sigma/\lvert\sigma\rvert$ is an isometric $\Ban{A}$-valued multiplier for $G \tto X$ cohomologous to $\sigma$ via a $1$-cochain $\rho: G \to \R_{>0}$. Let $\tilde{S}: G(x) \to \GL_{\Ban{A}}(\Ban{E})$ be the uniformly continuous $\tilde{\sigma}$-representation obtained by setting $\tilde{S}(k) = \rho(k)S(k)$. Given (to within equivalence) any extension of $\tilde{S}$ to a $\tilde{\sigma}$-representation of the sort indicated in the statement, an extension of $S$ to an analogous $\sigma$-representation will be obtained simply by dividing back by $\rho$. We may thus pretend without loss of generality that our multiplier $\sigma$ is isometric.

Let $S: G \mathbin| V \to \GL_{\Ban{A}}(\Ban{E})$ be an arbitrary local extension of $S$ to a uniformly continuous $\sigma$-representation of $G \mathbin| V \tto V$ on a trivial Banach $\Ban{A}$-module bundle $V \times \Ban{E} \to V$, the existence of which is guaranteed by Theorem \ref{thm:local}. Possibly after taking a smaller $V$ so as to ensure that $G \mathbin| V \tto V$ itself admits cutoff functions [see section \ref{sec:almost}], we can by averaging change the constant norm function on the latter Banach bundle to a nonconstant, locally equivalent one that is still continuous but under which $S$ now acts by isometries [cf.~the proof of Corollary \ref{thm:local*}]. The identity map $V \times \Ban{E} \xto{=} F$ serves as a global $\Ban{A}$-linear trivialization for the Banach $\Ban{A}$-module bundle $F \to V$ resulting from this new choice of norm function. We are now in a situation where we can apply Lemma \ref{lem:direct} to the uniformly continuous isometric $\sigma$-representation $S$ of $H = G \mathbin| V \tto V$ on $F \to V$ [cf.~Example \ref{exms:P2}\textit{c}]. To within equivalence, our lemma then yields an extension of $S$ to a uniformly continuous $\sigma$-representation $R$ of $G \mathbin| U \tto U$ on a $\GL_{\Ban{A}}(\Ban{E})$-structured locally trivial Banach $\Ban{A}$-module bundle $E \to U$ of class $C^0$, where $U = GV$ is the invariant saturation of $V$.

Let us fix some continuous $G$-invariant function $c: X \to [0,1]$ with $c(x) = 1$, $\supp c \subset U$. (Such functions can be obtained by a standard averaging argument which we leave to the reader.) Let us redefine $E_u$ to be $0$ wherever $c(u) = 0$, and, among all cross-sections of this new family of Banach spaces $E_u$, let us consider those of the form $u \mapsto c(u)\zeta(u)$, for all $\zeta \in \Gamma(U;E)$. Clearly, these form a vector space, have continuous norm functions, and span every $E_u$. By a well-known criterion \cite[p.~102, Prop.~10.4]{Fel77}, they are continuous for a unique topology on the family $\{E_u\}$ that turns this family into a Banach bundle \(%
	E = \bigcup E_u \to X
\). This is a locally presentable Banach $\Ban{A}$-module bundle (obviously) to which $R$ descends continuously (say, by Lemma \ref{lem:morphism}). \end{proof}

Notice that the representation $R$ constructed in the preceding proof operates by isometries on the locally presentable Banach $\Ban{A}$-module bundle $E \to X$ whenever $\lvert\sigma\rvert = 1$. On the other hand, upon assuming $S: G(x) \to \GI_{\Ban{A}}(\Ban{E})$ isometric, we are at present unable to conclude that $R$ is also necessarily \emph{isometrically} equivalent at $x$ to $S$, except under the following circumstances:

\begin{cor}\label{thm:global*} Under the hypotheses of\/ {\upshape Corollary \ref{thm:local*}}, there exists some unitary\/ $\sigma$-representation of\/ $G \tto X$ on a locally presentable Hilbert\/ $\Ban{A}$-module bundle\/ $E \to X$ of type\/ $\Ban{E}$ which, at\/ $x$, induces an isotropy\/ $\sigma$-representation unitarily equivalent to\/ $S$. \end{cor}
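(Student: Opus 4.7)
The plan is to imitate the argument used for Corollary \ref{thm:global}, replacing Theorem \ref{thm:local} by Corollary \ref{thm:local*} at the outset and using the Hilbert/unitary variant of the direct image construction throughout. Since the multiplier $\sigma$ is already unitary (in particular isometric), the preliminary cohomological rescaling step in the proof of Corollary \ref{thm:global}, which served to replace a general $\sigma$ by an isometric representative, may be omitted.

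First, apply Corollary \ref{thm:local*} to obtain a uniformly continuous unitary $\sigma$-representation $\tilde S: G \mathbin| V \to \GU_{\Ban{A}}(\Ban{E})$ on the trivial Hilbert $\Ban{A}$-module bundle $V \times \Ban{E} \to V$ over some open neighborhood $V$ of $x$, shrinking $V$ if necessary so that $G \mathbin| V \tto V$ itself admits cutoff functions and is thus proper. Setting $U = GV$, the inclusion homomorphism $\phi: G \mathbin| V \hookrightarrow G \mathbin| U$ satisfies PF1 and PF2 by Example \ref{exms:P2}\textit{c}, and hypothesis (ii) of Lemma \ref{lem:direct} holds because both groupoids are proper. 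Applying the Hilbert/unitary variant of Lemma \ref{lem:direct}, sketched at the end of section \ref{sec:images}, produces a locally trivial (hence locally presentable) Hilbert $\Ban{A}$-module bundle $E = \phi_*(V \times \Ban{E}) \to U$ together with a uniformly continuous unitary $\sigma$-representation $R = \phi_*\tilde S: s^*E \simto t^*E$ of $G \mathbin| U$ on it; the natural unitary equivalence $\phi^*\phi_*F \simeq F$ ensures that $R$ restricted to $G(x)$ is unitarily equivalent to $S$.

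The final step is to globalize from $U$ to $X$ just as in the proof of Corollary \ref{thm:global}. Choose a continuous $G$-invariant $c: X \to [0,1]$ with $c(x) = 1$ and $\supp c \subset U$, redefine $E_u := 0$ wherever $c(u) = 0$, and consider the cross-sections of this new family of Banach spaces given by $u \mapsto c(u)\zeta(u)$ for $\zeta \in \Gamma(U;E)$ (extended by zero outside $U$). These have continuous norm and span every fibre; moreover their pairwise $\Ban{A}$-valued inner products $u \mapsto c(u)^2\langle\zeta_1(u),\zeta_2(u)\rangle_u$ depend continuously on $u$ as well. By Fell's criterion \cite[p.~102, Prop.~10.4]{Fel77} there results a unique Hilbert $\Ban{A}$-module bundle topology $\tilde E \to X$ for which these sections are continuous; local presentability of type $\Ban{E}$ follows by multiplying local trivializations of $E$ over $U$ by $c$. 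The $G$-invariance $c(sg) = c(tg)$, combined with the unitarity of $R$, makes $R$ descend to a unitary $\sigma$-representation on $\tilde E \to X$, and since $c(x) = 1$ the fibre $\tilde E_x$ coincides with $E_x$, so the induced isotropy representation remains unitarily equivalent to $S$.

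The main obstacle is the Hilbert/unitary variant of Lemma \ref{lem:direct}, which is only alluded to in the paper but not fully spelled out: one has to verify that the right $H \tto Y$ action \eqref{eqn:direct1} on $\pr^*F$ preserves the $\Ban{A}$-valued inner product (a consequence of the unitarity of $S$ together with $\lvert\sigma\rvert = 1$), that the quotient inherits a continuously varying $\Ban{A}$-valued inner product making $\phi_*F$ a Hilbert $\Ban{A}$-module bundle, and that the $\GL_{\Ban{A}}(\Ban{E})$-structure produced by Lemma \ref{lem:direct} is compatible with this Hilbert structure well enough to yield local presentations in the Hilbert sense. Once these details are in hand, the rest is a routine transcription of the argument for Corollary \ref{thm:global}.
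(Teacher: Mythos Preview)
Your proposal is correct and follows essentially the same route as the paper, which simply says the corollary ``follows from Corollary~\ref{thm:local*} and Lemma~\ref{lem:direct} by an argument practically identical to the one laid out in the preceding proof.'' You have accurately identified the needed modifications: the rescaling step is unnecessary because $\sigma$ is already unitary, Corollary~\ref{thm:local*} replaces Theorem~\ref{thm:local} (and already yields an isometric extension, so no norm-averaging is needed), and the unitary variant of the direct image construction---addressed at the end of section~\ref{sec:images} and in the Remark following the proof of Lemma~\ref{lem:direct}---is invoked in place of the general one.
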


\begin{proof} This follows from Corollary \ref{thm:local*} and Lemma \ref{lem:direct} by an argument practically identical to the one laid out in the preceding proof. \end{proof}

\begin{cor}\label{thm:enough} On any proper groupoid, the representative functions arising from finite-type complex unitary representations are sufficiently many to separate arrows. \end{cor}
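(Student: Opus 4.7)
The plan is to reduce to the case where $g_1, g_2 \in G$ share both source and target. If $sg_1 \neq sg_2$ (respectively, $tg_1 \neq tg_2$), I would use the trivial one-dimensional representation $R \equiv 1$ on $X \times \C \to X$: its matrix coefficients have the form $g \mapsto \zeta(sg)\overline{\xi(tg)}$ for $\zeta, \xi \in C(X,\C)$, so picking $\zeta$ (or $\xi$) continuous on $X$ and separating the two source (target) points immediately distinguishes $g_1$ from $g_2$.

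In the main case $g_1, g_2 \in G(x,y)$ with $g_1 \neq g_2$, I would set $k := g_1 g_2^{-1} \in G(y) \setminus \{1y\}$. Properness of $G \tto X$ makes the anchor $(t,s): G \to X \times X$ proper (noted right after Definition \ref{defn:almost}), so $G(y) = (t,s)^{-1}(y,y)$ is a compact Hausdorff topological group. Classical Peter--Weyl then supplies a continuous finite-dimensional unitary representation $S: G(y) \to \GU(\C^n)$ with $S(k) \neq \id$; continuity forces uniform continuity because $\GU(\C^n)$ is finite-dimensional.

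Next I would feed $S$ into Corollary \ref{thm:global*}, taking $\Ban{A} = \C$ and trivial multiplier $\sigma \equiv 1$, to obtain a unitary representation $R$ of all of $G \tto X$ on a locally presentable Hilbert complex bundle $E \to X$ of type $\C^n$---hence of finite type---whose restriction to $G(y)$ is unitarily equivalent to $S$. In particular $R(k) \neq \id_{E_y}$, and since $R$ is a genuine (untwisted) representation, $R(g_1) = R(k)R(g_2) \neq R(g_2)$ as bounded operators $E_x \to E_y$. Pick $e \in E_x$ with $v := R(g_1)e - R(g_2)e \neq 0$.

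Finally, I would produce global cross-sections $\zeta, \xi \in \Gamma(X;E)$ with $\zeta(x) = e$ and $\xi(y) = v$: local presentability of $E \to X$ by $\C^n$ yields local sections realizing any prescribed fiber value near $x$ and $y$, which may be extended globally by multiplication with a continuous cutoff function (alternatively, an appeal to the Douady--dal~Soglio-Hérault theorem cited in section \ref{sec:representations} gives such sections directly). The matrix coefficient $g \mapsto \langle R(g)\zeta(sg), \xi(tg)\rangle$ then differs at $g_1$ and $g_2$ by $\langle v, v\rangle = \lvert v\rvert^2 > 0$, completing the separation. Essentially all of the hard work has been done by Corollary \ref{thm:global*}; the only non-formal check is that the finite-dimensionality of $S$ propagates to a \emph{finite-type} direct-image bundle, which is precisely the ``of type $\Ban{E}$'' clause of that corollary applied with $\Ban{E} = \C^n$.
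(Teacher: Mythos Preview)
Your proof is correct and follows essentially the same approach as the paper, which simply says the result is ``immediate from our last corollary [Corollary~\ref{thm:global*}] and the classical result for compact groups.'' You have merely spelled out in detail the reduction to the isotropy case, the invocation of Peter--Weyl, the application of Corollary~\ref{thm:global*}, and the construction of the separating matrix coefficient that the paper leaves implicit.
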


\begin{proof} Immediate from our last corollary and the classical result for compact groups (\cite[p.~21, Cor.~4.4]{Bre72}, \cite[p.~134, Thm.~3.1]{BtD85}). \end{proof}

\subsection*{Proof of the local extension theorem}

\begin{proof}[Proof of\/ {\upshape Lemma \ref{lem:direct}}] We shall only provide an argument under the simplifying hypothesis that there is some global $\Ban{A}$-linear trivialization $Y \times \Ban{E} \simto F \in \BB_{\Ban{A}}(Y)$ relative to which $S$ can be expressed as an operator-norm continuous map $S: H \to \GL_{\Ban{A}}(\Ban{E})$ satisfying \(%
	S(h)S(k) = \sigma(\phi h,\phi k)S(hk)
\), as for us in practice this is the only relevant case. The general case will be left to the reader.

Let $Z := G \ftimes{s}{\phi} Y \xto{q} X$, $(g,y) \mapsto tg$ be the map of PF2, and let $Z \xto{\pr} Y$, $(g,y) \mapsto y$ be the projection. The pullback $\pr^*F \to Z$ is itself globally trivial as a Banach $\Ban{A}$-module bundle, and $H \tto Y$ acts on $\pr^*F \to Z$ from the right by the rule \(%
	(g,th;e) \cdot h = \bigl(g\phi h,sh;S(h)^{-1}\sigma(g,\phi h)e\bigr)
\), where $e \in \Ban{E}$. For an arbitrary open $U \subset X$ one may change the Banach $\Ban{A}$-module bundle trivialization of $\pr^*F \to Z$ locally over $W = q^{-1}(U)$ to a $C^0$-compatible one, namely, to \(%
	W \times \Ban{E} \simto (\pr^*F) \mathbin| W \in \BB_{\Ban{A}}(W)\) given by \(%
		(g,y;e) \mapsto \bigl(g,y;T(g,y)e\bigr)
\), by using a continuous map $W \xto{T} \GL_{\Ban{A}}(\Ban{E})$. The new local trivialization will not descend to one of $E := \pr^*F/H \to X$ over $U$, in other words, the problem \[%
\xymatrix{%
	W \times \Ban{E} \ar[r]^-\sim
	\ar[d]^{q\times\id}
	&	(\pr^*F) \mathbin| W
		\ar[d]^{-/H}
\\	U \times \Ban{E} \ar@{-->}[r]^-\sim_-?
	&	E \mathbin| U
}\] will not admit a solution (of necessity unique and bicontinuous), unless $(g,th;e)$ and $(g\phi h,sh;e)$ always have the same image in $E$ under the top-to-bottom composition, equivalently,
\begin{equation}
	S(h)T(g\phi h,sh) = \sigma(g,\phi h)T(g,th)
\label{eqn:direct}
\end{equation}
for all $h$, equivalently, $T$ is $H$-equivariant; clearly just taking $T(g,y) = \id$ for all $g$,\ $y$ won't work. With the proviso that $X$ can be covered with open sets $U$ for which we can find such $T$'s over each $q^{-1}(U)$, we have the following\spacefactor3000: (1)~Any two of the resulting local trivializations $U \times \Ban{E} \simto E \mathbin| U$ of $E \to X$ are $C^0$-compatible, hence these trivializations give a well-defined $\GL_{\Ban{A}}(\Ban{E})$-structure for $E \to X$. (2)~Both the isometry $\phi^*E \simeq F$ and the direct image $\sigma$-representation $\phi_*S: s^*E \simto t^*E$ are uniformly continuous relative to the $\GL_{\Ban{A}}(\Ban{E})$-structure in question.

Now, whenever the map $q$ of PF2 admits continuous local cross-sections, the existence of enough suitable $T$'s can be established as follows. Let $U \to Z$, $u \mapsto \bigl(g(u),y(u)\bigr)$ be any such cross-section: $tg(u) = u$, $sg(u) = \phi y(u)$. By PF1, $(g,y) \in W = q^{-1}(U)$ equals $\bigl(g(tg),y(tg)\bigr) \cdot h$ for exactly one $h = h(g,y)$ satisfying $th = y(tg)$ and depending continuously on $(g,y)$. We obtain $T$ on $W$ by setting \(%
	\bigl(g,y;T(g,y)e\bigr) = \bigl(g(tg),y(tg);e\bigr) \cdot h(g,y)
\), equivalently,
\begin{equation*}
	T(g,y) = S\bigl(h(g,y)\bigr)^{-1}\sigma\bigl(g,\phi h(g,y)\bigr).
\end{equation*}

On the other hand, whenever both $H \tto Y$ and $G \tto X$ are proper, an adequate stock of suitable $T$'s can be obtained by means of the following procedure. Given $x_0 \in X$, let us fix $z_0 = (g_0,y_0) \in Z$ so as to have $tg_0 = x_0$. There is a unique $H$-equivariant map $T_0: q^{-1}(x_0) \to \GL_{\Ban{A}}(\Ban{E})$ for which $T_0(z_0) = \id$; this is defined along the $H$-orbit $q^{-1}(x_0) = z_0H$, and is given by \(%
	T_0(g_0\phi h,sh) = S(h)^{-1}\sigma(g_0,\phi h)
\) for all $h \in H^{y_0}$. By the uniform continuity of $S$, this map is also continuous from the closed subset $q^{-1}(x_0)$ of $Z$ to the Banach space $\End_{\Ban{A}}(\Ban{E})$. By Fell's version of Tietze's extension theorem \cite[p.~104, Thm.~10.7]{Fel77} there has to be some continuous extension $Z \to \End_{\Ban{A}}(\Ban{E})$ of $T_0$ to all of $Z$. By using a normalized Haar measure system on $H \tto Y$, we can average this out and make it $H$-equivariant as well without changing it along $q^{-1}(x_0)$. Now, by the properness of $G \tto X$ and consequently of the map $Z = G \times_X Y \xto{q} X$, there must be some open neighborhood $U$ of $x_0$ in $X$ for which on the open $H$-invariant “tube” $W = q^{-1}(U)$ the continuous $H$-equivariant extension $Z \to \End_{\Ban{A}}(\Ban{E})$ of $T_0$ actually takes values in $\GL_{\Ban{A}}(\Ban{E})$. \end{proof}

\begin{rmk*} Within the framework of the preceding proof, whenever $\Ban{E}$ is a Hilbert $\Ban{A}$-module for $\Ban{A} = \R$,~$\C$,~or $\mathbb{H}$ and $S: H \to \GU_{\Ban{A}}(\Ban{E})$ is a unitary $\phi^*\sigma$-representation, we may consider the polar decomposition $T(g,y) = T'(g,y)P(g,y)$ of $W \xto{T} \GL_{\Ban{A}}(\Ban{E})$. It is easy to see that $W \xto{T'} \GU_{\Ban{A}}(\Ban{E})$ must be continuous. $T'$ must satisfy \eqref{eqn:direct} as well because, upon substituting into \eqref{eqn:direct}, uniqueness of polar decomposition implies equality of the unitary parts of the two members of the resulting equation. Under the present circumstances we can thus actually find enough $\GU_{\Ban{A}}(\Ban{E})$-valued $T$'s and, consequently, an $\GI_{\Ban{A}}(\Ban{E})$-structure for $E \to X$. \end{rmk*}

\begin{proof}[Proof of\/ {\upshape Theorem \ref{thm:local}}] For the same reasons we put forward in the proof of Corollary \ref{thm:global}, we may suppose with no loss of generality that $\sigma$ is an isometric multiplier and that $S: G(x) \to \GI_{\Ban{A}}(\Ban{E})$ is an isometric $\sigma$-representation; all we need to do is replace $\sigma$ with $\tilde{\sigma} = \sigma/\lvert\sigma\rvert$ and $S$ with the uniformly continuous $\tilde{\sigma}$-representation $\tilde{S}(g) = \rho(g)S(g)$, where $\rho$ is as in Lemma \ref{lem:multiplier}, and then pass to an equivalent norm under which $\tilde{S}$ acts isometrically on the Banach $\Ban{A}$-module $\Ban{E}$; it is perhaps of interest to note that in the present case the existence of such norms is a consequence of the boundedness of $\tilde{S}$ alone \cite[corollary p.~82]{Lyu88} rather than of the compactness of $G(x)$.

Let $O = Gx$ be the $G$-orbit through $x$. By properness this is a closed invariant subspace of $X$. The restriction $G \mathbin| O \tto O$ is itself a locally compact groupoid, in fact, a proper one: since it is also transitive, we are in the situation of Example \ref{exms:P2}\textit{b}. We may therefore apply Lemma \ref{lem:direct} to the inclusion $H = G(x) \subset G \mathbin| O$ and to the $\sigma$-representation $S: G(x) \to \GI_{\Ban{A}}(\Ban{E})$ so as to find an extension of $S$ to a uniformly continuous $\sigma$-representation $R: s^*E \simto t^*E$ of $G \mathbin| O \tto O$ on a $\GL_{\Ban{A}}(\Ban{E})$-structured Banach $\Ban{A}$-module bundle $E \to O$: over some open neighborhood $U$ of $x$ in $O$, there exists some Banach $\Ban{A}$-module bundle trivialization $U \times \Ban{E} \simto E \mathbin| U \in \BB_{\Ban{A}}(U)$ relative to which $R$ is represented by a continuous $\sigma$-multiplicative map $R: G \mathbin| U \to \GL_{\Ban{A}}(\Ban{E})$ extending $S: G(x) \to \GI_{\Ban{A}}(\Ban{E})$. Let us write $U = O \cap V$, where $V$ is an open neighborhood of $x$ in $X$. Viewing $G \mathbin| U = G \mathbin| O \cap G \mathbin| V$ as a closed subspace of $G \mathbin| V$ and $R$ as a continuous partial cross-section of the trivial Banach bundle with fiber $\End_{\Ban{A}}(\Ban{E})$ over $G \mathbin| V$, we may invoke Fell's version of Tietze's extension theorem \cite[p.~104, Thm.~10.7]{Fel77} in order to extend $R: G \mathbin| U \to \GL_{\Ban{A}}(\Ban{E})$ to a continuous map $T: G \mathbin| V \to \End_{\Ban{A}}(\Ban{E})$.

We claim that, possibly after shrinking $V$ around $x$ if necessary, any such extension $T$ must be an almost $\sigma$-representation of $G \mathbin| V \tto V$ on the trivial Banach $\Ban{A}$-module bundle $V \times \Ban{E} \to V$. Our argument is a straightforward adaptation of \cite[§4.1]{2016a}. To begin with, since our claim is local around $x$, there is no loss of generality in pretending that $T: G \to \End_{\Ban{A}}(\Ban{E})$ is a continuous map defined on all of $G \tto X$, that its restriction $G \mathbin| O \to \GL_{\Ban{A}}(\Ban{E})$ along $O = Gx$ is $\sigma$-multiplicative, and that the continuous function $g \mapsto \lVert T(g)\rVert$ is bounded by, say, $b \geq 1$ on $G$. Let us consider the following two open subsets of $G$. (In order to be able to say they are open, we need the continuity of $T: G \to \End(\Ban{E})$; this is the second time we fall back on our uniform continuity hypothesis about the isotropy representation $S: G(x) \to \GL(\Ban{E})$; the first time was when we invoked \cite[p.~104, Thm.~10.7]{Fel77}.)
\begin{align*}
	\varOmega_0&= \{y \in X: \lVert\id - T(1y)\rVert < b^{-2}/18\} \\
	\varOmega_2&= \{(g,h) \in G \ftimes{s}{t} G: \lVert\sigma(g,h)T(gh) - T(g)T(h)\rVert < b^{-2}/18\}
\end{align*}
Clearly $x \in \varOmega_0$, $G(x) \times G(x) \subset \varOmega_2$. If we are able to find an open subset $V \subset \varOmega_0$ containing $x$ such that $G \mathbin| V \ftimes{s}{t} G \mathbin| V \subset \varOmega_2$, then $T: G \mathbin| V \to \GL_{\Ban{A}}(\Ban{E})$ will be an almost $\sigma$-representation: since $\sigma$ is isometric and therefore $\ell(\sigma,g) = 1$, for all $v$ in any such $V$ we shall have \[%
	r(T,v) \leq
	\sup_{y\in V}{}\lVert\id - T(1y)\rVert + \sup_{\substack{g,h\in G\mathbin|V\\ sg=th}}\lVert\sigma(g,h)T(gh) - T(g)T(h)\rVert
	\leq b^{-2}/9 \leq b(T,v)^{-2}/9.
\] Now by properness of $G \tto X$ for every compact neighborhood $C$ of $x$ the set $G \mathbin| C$ is compact in $G$ in $\varOmega_0$ and thus $G \mathbin| C \ftimes{s}{t} G \mathbin| C \smallsetminus \varOmega_2$ is compact in $G \ftimes{s}{t} G$. The latter compact sets have empty intersection because obviously the intersection of all $G \mathbin| C \ftimes{s}{t} G \mathbin| C$ is equal to $G(x) \times G(x)$. Then $G \mathbin| C \ftimes{s}{t} G \mathbin| C \smallsetminus \varOmega_2$ will be empty for some $C$, whose interior will serve as our $V$.

We are now in a position to conclude. By our almost representation theorem, Theorem \ref{thm:almost}, as applied to $G \mathbin| V \tto V$, there exists some uniformly continuous $\sigma$-representation $G \mathbin| V \to \GL_{\Ban{A}}(\Ban{E})$ lying for each $v$ in $V$ within a distance of $4b(T,v)r(T,v)$ from $T$ along $Gv \cap V$. Since $T$ restricted along $U = Gx \cap V$ is the $\sigma$-representation $R$ and so $r(T,x) = r(R,x) = 0$, this must coincide with $R$ along $U$ and thus, a fortiori, with the original representation $S: G(x) \to \GI_{\Ban{A}}(\Ban{E})$ at $x$. The proof of our local extension theorem, Theorem \ref{thm:local}, is finished. \end{proof}

\section{Tannakian reconstruction of proper groupoids}\label{sec:tannakian}

Let $G \tto X$ be a locally compact groupoid. Let $\Rep(G)$ denote the category of all its finite-type complex unitary representations and their intertwiners. The notion of complex tensor category, which we proceed to review, captures much of the relevant structure of $\Rep(G)$.

A \emph{symmetric monoidal structure} on a category $\cat{C}$ comprises a bifunctor $\otimes: \cat{C} \times \cat{C} \to \cat{C}$ (the \emph{monoidal product}), a distinguished object $1 \in \cat{C}$ (the \emph{monoidal unit}), and natural isomorphisms
\begin{alignat*}{2} &
	R \otimes (S \otimes T) \simeq (R \otimes S) \otimes T &\quad &\text{(the \emph{associativity constraint})} \\ &
	R \otimes S \simeq S \otimes R                         &\quad &\text{(the \emph{symmetry constraint})} \\ &
	1 \otimes R \simeq R \simeq R \otimes 1                &\quad &\text{(the \emph{unit constraints})}
\end{alignat*}
satisfying certain axioms a detailed knowledge of which will be dispensable for our purposes; we refer the reader to \cite[Ch.~XI.1]{ML98} for a discussion of these axioms, which are known as (\emph{Mac~Lane's}) \emph{coherence conditions}.

Let $\cat{C} = (\cat{C},\otimes,1,\mathellipsis)$ be a \emph{symmetric monoidal category}, i.e., a category endowed with a symmetric monoidal structure. A \emph{symmetric monoidal functor} of $\cat{C}$ to another symmetric monoidal category $\cat{D} = (\cat{D},\otimes,1,\mathellipsis)$ is the result of adjoining natural isomorphisms
\begin{align*}
	\varPhi(R) \otimes \varPhi(S)&\simeq \varPhi(R \otimes S) & 1&\simeq \varPhi(1) & &\text{(the \emph{monoidal functor constraints})}
\end{align*}
to a functor $\varPhi: \cat{C} \to \cat{D}$ in such a manner that a few obvious requirements of compatibility with the symmetric monoidal constraints are satisfied which we are not going to reproduce here; the reader is referred to \cite[Ch.~XI.2]{ML98} for a precise formulation of these requirements.

Suppose that each hom-set $\cat{C}(R,S)$ of a given symmetric monoidal category $\cat{C}$ is equipped with the structure of a complex vector space in such a way that the monoidal product bifunctor $\otimes$ and the composition of morphisms are both bilinear. We obtain a \emph{conjugation} in $\cat{C}$ by specifying a symmetric monoidal endofunctor $R \mapsto \cnj{R}$ of $\cat{C}$ which is antilinear on hom-sets plus a natural isomorphism \(%
	\cnj{\cnj{R}} \simeq R
\) for which the composition \(%
	\cnj{R} \simeq \cnj{\cnj{(\cnj{R})}} = \cnj{(\cnj{\cnj{R}})} \simeq \cnj{R}
\) is the identity. We call a symmetric monoidal category $\cat{C}$ equipped with a compatible complex-linear structure and some such conjugation a \emph{complex tensor category} if, as a preadditive category, $\cat{C}$ is additive i.e.~admits a zero object $0$ and direct sum objects $R \oplus S$ for all $R$,~$S \in \cat{C}$; as the complex tensor categories of interest to us will mostly happen not to be abelian, this is as much as we demand.

Let $\cat{C}$, $\cat{D}$ be complex tensor categories. We upgrade the concept of a symmetric monoidal functor $\varPhi: \cat{C} \to \cat{D}$ to that of a \emph{complex tensor functor} by adding one more constraint to the list, namely, a natural isomorphism \(%
	\cnj{\varPhi(R)} \simeq \varPhi(\cnj{R})
\) such that the composition \(%
	\varPhi(R) \simeq \cnj{\cnj{\varPhi(R)}} \simeq \cnj{\varPhi(\cnj{R})} \simeq \varPhi(\cnj{\cnj{R}}) \simeq \varPhi(R)
\) equals the identity, and by requiring that, as an ordinary functor, $\varPhi$ be complex-linear on hom-sets.

\begin{exms*} Let $\Hil(X)$ denote the category of all finite-type complex Hilbert bundles over a locally compact space $X$; by definition, $\Hil(X)$ is a full subcategory of $\HB_\C(X)$. For any locally compact groupoid $G \tto X$, let $\Rep(G)$ denote the category of all complex unitary representations of $G \tto X$ on objects of $\Hil(X)$. Both $\Hil(X)$ and $\Rep(G)$ can be promoted to complex tensor categories in an evident fashion. The pullback $\Rep(G) \to \Rep(H)$ along any continuous homomorphism of locally compact groupoids $H \to G$, as well as the forgetful functor $\Rep(G) \to \Hil(X)$, is a complex tensor functor. \end{exms*}

Let $\varPhi$,~$\varPsi: \cat{C} \to \cat{D}$ be two complex tensor functors of a complex tensor category $\cat{C}$ to another such category $\cat{D}$. Let $\gamma: \varPhi \hto \varPsi$ be a natural transformation. We express the circumstance that \(%
	1 \simeq \varPhi(1) \xto{\gamma_1} \varPsi(1)
\) is equal to $1 \simeq \varPsi(1)$ and that%
\begin{multline*}
	\varPhi(R) \otimes \varPhi(S) \xto{\gamma_R\otimes\gamma_S} \varPsi(R) \otimes \varPsi(S) \simeq \varPsi(R \otimes S) \text{\quad equals} \\%
		\varPhi(R) \otimes \varPhi(S) \simeq \varPhi(R \otimes S) \xto{\gamma_{R\otimes S}} \varPsi(R \otimes S) \text{\quad $\forall R$,~$S \in \cat{C}$}
\end{multline*}
by saying $\gamma$ is \emph{tensor preserving}, in symbols, $\gamma \in \Hom^\otimes(\varPhi,\varPsi)$. We further call $\gamma \in \Hom^\otimes(\varPhi,\varPsi)$ \emph{self-conjugate}, and write $\gamma \in \Hom^{\cnj\otimes}(\varPhi,\varPsi)$, if%
\begin{equation*}
	\cnj{\varPhi(R)} \xto{\cnj{\gamma_R}} \cnj{\varPsi(R)} \simeq \varPsi(\cnj{R}) \text{\quad equals}\quad%
		\cnj{\varPhi(R)} \simeq \varPhi(\cnj{R}) \xto{\gamma_{\cnj{R}}} \varPsi(\cnj{R}) \text{\quad $\forall R \in \cat{C}$.}
\end{equation*}

\begin{lem}\label{lem:tannakian} Let\/ $\cat{C}$ be a complex tensor category and let\/ $\varPhi: \cat{C} \to \VSp$ be a complex tensor functor with values in the complex tensor category of all finite-dimensional complex vector spaces. Let\/ $\pi: K \to \End^{\cnj\otimes}(\varPhi)$ be a homomorphism of a compact group\/ $K$ to the monoid of all self-conjugate tensor-preserving natural transformations of\/ $\varPhi$ into itself. Suppose that\/ $\pi$ is\/ \emph{continuous} in the sense that\/ $\pi_R: K \to \GL\bigl(\varPhi(R)\bigr)$, $k \mapsto \pi(k)_R$ is a continuous map---and hence\/ $\varPhi(R)$ is a finite-dimensional complex\/ $K$-module---for every object\/ $R$ of $\cat{C}$. Suppose in addition that the two conditions hereafter are satisfied:
\begin{enumerate}
\def\labelenumi{\upshape(\roman{enumi})}
 \item Each finite-dimensional complex\/ $K$-module can be embedded\/ $K$-equivariantly into\/ $\varPhi(R)$ for at least one object\/ $R \in \cat{C}$.
 \item Each\/ $K$-equivariant linear map\/ $\varPhi(R) \to \varPhi(S)$ is equal to\/ $\varPhi(L)$ for at least one morphism\/ $R \xto{L} S \in \cat{C}$.
\end{enumerate}
Then\/ $\pi: K \simto \End^{\cnj\otimes}(\varPhi)$ is bijective, and hence\/ $\End^{\cnj\otimes}(\varPhi) = \Aut^{\cnj\otimes}(\varPhi)$ is not only a monoid, but actually a group. \end{lem}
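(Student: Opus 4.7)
The plan is to extend any $\gamma \in \End^{\cnj\otimes}(\varPhi)$ to a self-conjugate tensor-preserving endomorphism $\tilde\gamma$ of the forgetful functor $\omega \colon \Rep(K) \to \VSp$, and then invoke the classical Tannaka--Krein theorem for the compact group $K$, which identifies $\End^{\cnj\otimes}(\omega)$ with $K$ acting tautologically. Injectivity of $\pi$, by contrast, will follow from a direct Peter--Weyl argument: if $\pi(k) = \id_\varPhi$ then, by~(i), $k$ acts trivially on every finite-dimensional continuous complex $K$-module, so $k = 1$ since the matrix coefficients of such modules separate points of $K$.

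The heart of the surjectivity argument is a subspace-stability lemma: for every $R \in \cat{C}$ and every $K$-stable subspace $V \subset \varPhi(R)$, the operator $\gamma_R$ preserves $V$. By compactness of $K$ the subspace $V$ is the image of a $K$-equivariant projection $p \in \End_K\bigl(\varPhi(R)\bigr)$; condition~(ii) produces an endomorphism $L$ of $R$ with $\varPhi(L) = p$; naturality of $\gamma$ then yields $\gamma_R p = p \gamma_R$, which forces $\gamma_R(V) \subset V$.

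Given a finite-dimensional continuous complex $K$-module $V$, I pick via~(i) any $K$-equivariant embedding $\iota \colon V \hookrightarrow \varPhi(R)$ and set $\tilde\gamma_V := \iota^{-1} \gamma_R|_{\iota(V)} \iota$, which is meaningful by the above lemma. Independence of $\iota$ is checked by the same trick: for two embeddings $\iota_j \colon V \hookrightarrow \varPhi(R_j)$ ($j=1,2$), the $K$-equivariant composite $\iota_2 \iota_1^{-1} p_1 \colon \varPhi(R_1) \to \varPhi(R_2)$ (with $p_1$ a $K$-equivariant projection onto $\iota_1(V)$) lifts by~(ii) to a morphism in $\cat{C}$, so naturality of $\gamma$ forces the two candidate definitions of $\tilde\gamma_V$ to agree. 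Naturality of $\tilde\gamma$ along a $K$-equivariant map $V \to W$, its tensor-preserving property, and its self-conjugacy are all established by the same pattern: lift each relevant $K$-equivariant map via~(ii), then transport the corresponding structural identity from $\gamma$ by naturality.

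Finally, the classical Tannaka--Krein theorem for the compact group $K$ gives $K \simto \End^{\cnj\otimes}(\omega)$ via the tautological action, so $\tilde\gamma$ coincides with the image of a unique $k \in K$, and restricting along the identity embedding $\varPhi(R) \subset \varPhi(R)$ recovers $\gamma_R = \pi(k)_R$ for every $R$; this settles surjectivity. Since $\pi(K)$ lies automatically inside the group of invertible natural transformations, $\End^{\cnj\otimes}(\varPhi) = \Aut^{\cnj\otimes}(\varPhi)$ is indeed a group. I expect the main obstacle to be the rigorous verification that $\tilde\gamma$ preserves the full complex tensor structure rather than just the bare functorial data --- tensor products, unit, and conjugation each requiring a separate lift via~(ii) --- but in every case the task reduces to the same naturality argument used in the well-definedness step.
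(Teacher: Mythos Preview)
Your proposal is correct and follows essentially the same route as the paper: both extend $\gamma$ to a self-conjugate tensor-preserving endomorphism $\tilde\gamma$ of the forgetful functor $K\text{-}\VSp \to \VSp$ using condition~(i) for the embeddings and condition~(ii) to lift the relevant $K$-equivariant maps to $\cat{C}$, and then invoke classical Tannaka duality for $K$. The only cosmetic difference is that where you first prove a subspace-stability lemma and define $\tilde\gamma_V$ by restriction, the paper instead picks a $K$-equivariant retraction $\varrho$ of $\iota$ and sets $\tilde\gamma_V = \varrho \circ \gamma_R \circ \iota$; both formulations rely on the same naturality argument via~(ii), and the paper likewise leaves the tensor and conjugation compatibilities to the reader.
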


\begin{proof} Let $K\text-\VSp$ denote the category of all finite-dimensional complex $K$-modules and their $K$-equivariant linear transformations. Let $\varPsi: K\text-\VSp \to \VSp$ be the forgetful functor, which we regard as a complex tensor functor in the evident way. We are going to construct an isomorphism of complex algebras \[%
	\End(\varPhi) \simto \End(\varPsi),~\gamma \mapsto \tilde{\gamma}
\] under which $\End^{\cnj\otimes}(\varPhi)$ is going to correspond bijectively to $\End^{\cnj\otimes}(\varPsi)$ and whose composition with $\pi$ is going to be equal to the standard “tautological” isomorphism $\varepsilon: K \simto \End^{\cnj\otimes}(\varPsi)$ of classical Tannaka duality---the one sending $k \in K$ to $\varepsilon(k)$ given by left multiplication by $k$ \cite{BtD85,JS91}. Clearly our lemma will follow from this.

Let $\gamma \in \End(\varPhi)$ be an arbitrary natural transformation of $\varPhi$ to itself. By (i), for each finite-dimensional complex $K$-module $V$ there exists an object $R \in \cat{C}$ together with a $K$-equivariant linear embedding $V \xto{\iota} \varPhi(R)$. Since finite-dimensional complex $K$-modules for a compact $K$ are reductive, there exists some $K$-equivariant linear map $\varPhi(R) \xto{\varrho} V$ such that $\varrho \circ \iota = \id$ on $V$. We contend that the linear map \(%
	\varrho \circ \gamma_R \circ \iota: V \to V
\) does not depend on our choices of $R$, $\iota$, and $\varrho$. To see it, let us suppose slightly more generally that we are given a $K$-equivariant linear map $\beta: V \to W$ to another finite-dimensional complex $K$-module $W$ plus a pair of $K$-equivariant linear maps $W \xto{\kappa} \varPhi(S) \xto{\varsigma} W$ for some $S \in \cat{C}$ which form a retraction in the sense that $\varsigma \circ \kappa = \id$ on $W$. Now the composition \(%
	\varPhi(R) \xto{\varrho} V \xto{\beta} W \xto{\kappa} \varPhi(S)
\) is a $K$-equivariant linear map $\varPhi(R) \to \varPhi(S)$ and so by (ii) it must be equal to $\varPhi(L)$ for some morphism $R \xto{L} S$ in $\cat{C}$. By the naturality of $\gamma$ we must then have
\begin{equation*}
 \varsigma \circ \gamma_S \circ \kappa \circ \beta
	= \varsigma \circ \gamma_S \circ \varPhi(L) \circ \iota
	= \varsigma \circ \varPhi(L) \circ \gamma_R \circ \iota
	= \beta \circ \varrho \circ \gamma_R \circ \iota.
\end{equation*}
Upon taking $\beta$ to be the identity map $V \xto= V$, our contention follows. Our reasoning shows more generally that the correspondence $V \mapsto \tilde{\gamma}_V := \varrho \circ \gamma_R \circ \iota$ is natural in $V$ and thus defines an element $\tilde{\gamma}$ of $\End(\varPsi)$.

For any $V \xto{\iota} \varPhi(R)$ as above, by definition of the $K$-module structure on $\varPhi(R)$ and by the $K$-equivariance of $\iota$, we have \(%
	\widetilde{\pi(k)}_V = \varrho \circ \pi(k)_R \circ \iota = \varrho \circ \iota \circ \varepsilon(k)_V = \varepsilon(k)_V
\). We leave it to the reader to check that the correspondence $\End(\varPhi) \simto \End(\varPsi)$, $\gamma \mapsto \tilde{\gamma}$ is bijective, as well as linear and multiplicative, and that $\gamma$ is self-conjugate (resp., tensor preserving) iff so is $\tilde{\gamma}$. \end{proof}

\begin{cmts*} The preceding lemma generalizes \cite[Prop.~2.3]{2008a} to the case where $K$ is compact though not necessarily a Lie group. The assumptions of loc.~cit.~do not include (i) because in the Lie group case the latter hypothesis may be dropped at the cost of losing the injectivity of $\pi$; one can invoke the “descending chain property” for closed normal subgroups of $K$ (see \cite[p.~136]{BtD85}, \cite[p.~429]{JS91}) to show that there has to be some object $R_0$ of $\cat{C}$ for which the kernel of the representation $\pi_{R_0}$ is equal to $\bigcap_{R\in\cat{C}} \ker\pi_R$ and that for any such $R_0$ every finite-dimensional complex $K$-module whose kernel contains $\bigcap_{R\in\cat{C}} \ker\pi_R$ has to be a submodule of $\varPhi(R_0)$. \end{cmts*}

\subsection*{Statement and proof of the reconstruction theorem}

Let us now consider the forgetful functor $\Rep(G) \xto{\varPhi} \Hil(X)$, $R = (E,R) \mapsto E$ and, for each $x$ in $X$, let us write $\varPhi_x: \Rep(G) \to \VSp$ for its composition with the functor $\Hil(X) \to \VSp$ that assigns $E$ the corresponding fiber at $x$ regarded as a plain finite-dimensional complex vector space---in other words $\varPhi_x(R)$ is what is left of $E_x$ after discarding the hermitian inner product. There is an evident way of promoting $\varPhi_x$ to a complex tensor functor.

\begin{defn}\label{defn:tannakian} The \emph{tannakian bidual} of a locally compact groupoid $G \tto X$ is the groupoid \(%
	\Bid(G) \tto X
\) with hom-sets \[%
	\Bid(G)(x,y) = \Iso^{\cnj\otimes}(\varPhi_x,\varPhi_y) \text{,\quad $x$,~$y \in X$}
\] (the right-hand side being the set of all self-conjugate tensor preserving natural isomorphisms of $\varPhi_x$ to $\varPhi_y$) and with multiplication law \(%
	(\delta\gamma)_R = \delta_R\gamma_R
\) given by the standard “vertical” composition of natural transformations \cite[p.~42]{ML98}. Its arrow space $\Bid(G)$ is given the coarsest topology for which all “representative functions” \[%
	\gamma \mapsto \langle\gamma_R\zeta(s\gamma),\eta(t\gamma)\rangle
		\text{,\quad $R = (E,R) \in \Rep(G)$, $\zeta$,~$\eta \in \Gamma(X;E)$}
\] are continuous. It is easy to check that the topology in question turns $\Bid(G) \tto X$ into a topological groupoid and that the following homomorphism, the \emph{comparison map}, is continuous: \[%
	\pi: G \longto \Bid(G),~g \longmapsto \{R \mapsto R(g)\}.
\] \end{defn}

Although for a general $G \tto X$ the comparison map need not be injective or surjective, the following is true.

\begin{thm}\label{thm:tannakian} For any proper\/ $G \tto X$ the comparison map is an isomorphism\/ $\pi: G \simto \Bid(G)$ of topological groupoids. \end{thm}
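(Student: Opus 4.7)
Continuity of $\pi$ is built into the topology on $\Bid(G)$. For injectivity, $\pi(g) = \pi(g')$ means $R(g) = R(g')$ on every $R \in \Rep(G)$, whereupon Corollary \ref{thm:enough} gives $g = g'$. For surjectivity, I first argue that $\Bid(G)(x,y) = \emptyset$ whenever $y \notin Gx$. Properness makes $X/G$ Hausdorff, so I can choose an invariant open neighborhood $U$ of $x$ with $y \notin U$; feeding the trivial one-dimensional unitary $G(x)$-representation through Corollary \ref{thm:global*} yields $(E,R) \in \Rep(G)$ whose Hilbert bundle is supported inside $U$, so $E_x \ne 0$ but $E_y = 0$, precluding any isomorphism $\varPhi_x(E,R) \simto \varPhi_y(E,R)$. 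Thus any $\gamma \in \Bid(G)(x,y)$ forces $y \in Gx$, and composing with $\pi(g_0)^{-1}$ for some $g_0 \in G(x,y)$ reduces the problem to $\Bid(G)(x,x) = \pi(G(x))$.

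\textbf{The tannakian step.} I apply Lemma \ref{lem:tannakian} with $\cat{C} = \Rep(G)$, $K = G(x)$ (compact by properness), $\varPhi = \varPhi_x$, and the evaluation homomorphism $\pi$. Hypothesis~(i) is exactly what Corollary \ref{thm:global*} gives: any finite-dimensional complex unitary $G(x)$-module $V$ extends to a $(G \tto X)$-unitary representation on a Hilbert bundle $E$ with $E_x \simeq V$ unitarily and $G(x)$-equivariantly. Modulo hypothesis~(ii), the lemma then yields bijectivity of $\pi: G(x) \simto \End^{\cnj\otimes}(\varPhi_x) = \Aut^{\cnj\otimes}(\varPhi_x) = \Bid(G)(x,x)$.

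\textbf{Verifying hypothesis (ii): the main obstacle.} Given a $G(x)$-equivariant $\C$-linear map $T: E_x \to F_x$ between fibers of $R = (E,R)$,~$S = (F,S) \in \Rep(G)$, I must produce an intertwiner $L: R \to S$ with $L_x = T$. Intertwiners identify with continuous $G$-invariant cross-sections of the finite-rank complex Banach bundle $\Hom(E,F) \to X$, with fibers $\Hom_\C(E_y,F_y)$ and continuous diagonal $G$-action $(g,M) \mapsto S(g)MR(g)^{-1}$; at $x$, such sections restrict to $G(x)$-fixed vectors. The construction has three steps. First, define $\sigma_0$ on the orbit $Gx$ by $\sigma_0(tg) = S(g)TR(g)^{-1}$ for $g \in G_x$, which is well defined by $G(x)$-equivariance of $T$ and continuous because, by Example \ref{exms:P2}\textit{b}, $t: G_x \to Gx$ is an open continuous surjection, hence a topological quotient. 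Second, since $Gx$ is closed in the locally compact Hausdorff space $X$, invoke Fell's Tietze-type theorem \cite[p.~104, Thm.~10.7]{Fel77} to extend $\sigma_0$ to a possibly non-invariant continuous cross-section $\sigma \in \Gamma(X;\Hom(E,F))$. Third, average using the normalized cutoff function $c$ of section \ref{sec:almost}:
\[
	L(y) = \integral_{th=y} c(sh)S(h)\sigma(sh)R(h)^{-1} \der h.
\]
Left invariance of the Haar system makes $L$ a $G$-invariant continuous cross-section, while on $Gx$ the integrand is constantly equal to $\sigma_0(y)$ (by invariance of $\sigma|_{Gx} = \sigma_0$) and the normalization $\integral c(sh) \der h = 1$ delivers $L|_{Gx} = \sigma_0$; in particular $L_x = T$.

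\textbf{Continuity of the inverse.} It remains to prove the continuous bijection $\pi$ is closed. The source and target maps on $\Bid(G)$ are continuous because taking $R$ to be the trivial line bundle representation $X \times \C$ (for which $\gamma_R = \id$ by unit preservation) collapses the representative functions to products $\zeta(s\gamma)\cnj{\eta(t\gamma)}$ with arbitrary $\zeta$,~$\eta \in C(X,\C)$, and the Tychonoff property of $X$ then delivers continuity of $s$ and $t$ separately. Now suppose $C \subseteq G$ is closed and $\pi(g_\alpha) \to \gamma$ with $g_\alpha \in C$; then $sg_\alpha \to s\gamma$ and $tg_\alpha \to t\gamma$ lie eventually in a compact subset of $X$, and properness of $G \tto X$ confines $g_\alpha$ to a compact subset of $G$. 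A subnet therefore converges to some $g \in C$; continuity of $\pi$ and Hausdorffness of $\Bid(G)$ (its topology is separated by $\C$-valued representative functions) force $\gamma = \pi(g) \in \pi(C)$. Hence $\pi$ is a homeomorphism, finishing the proof that $\pi: G \simto \Bid(G)$ is an isomorphism of topological groupoids.
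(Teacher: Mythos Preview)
Your argument follows the paper's own strategy closely—Lemma \ref{lem:tannakian} with Corollary \ref{thm:global*} supplying condition~(i), an averaging construction supplying condition~(ii), and a properness argument for bicontinuity—so the overall architecture is right. Two of your implementation choices differ harmlessly from the paper's: you separate orbits by producing a representation with $E_x\neq 0=E_y$ rather than by invariant scalar functions, and you prove $\pi$ is closed via nets rather than open via the paper's compact-exhaustion argument; both alternatives are fine.

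There is, however, a genuine sloppiness in your treatment of hypothesis~(ii). You assert that ``intertwiners identify with continuous $G$-invariant cross-sections of the finite-rank complex Banach bundle $\Hom(E,F)\to X$'' and then invoke Fell's Tietze theorem on that bundle. Two problems: first, $\Hom(E,F)$ has not been given a Banach-bundle topology anywhere in the paper for bundles that are merely of finite type (hence possibly not locally trivial), and you cannot simply assume one; the correct move is to work with $\cnj E\otimes F\in\Hil(X)$, which \emph{is} a Banach bundle by the tensor-category structure, and whose continuous sections do yield continuous morphisms $E\to F$. Second, and more seriously, the identification you claim is false in the direction you state it: when ranks jump, a continuous morphism $E\to F$ need not arise from a continuous section of $\cnj E\otimes F$ (its operator norm is only upper-semicontinuous, not continuous). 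Fortunately your proof only needs the \emph{other} direction—from a section of $\cnj E\otimes F$ you get a morphism, and after averaging, an intertwiner—so the argument survives once reworded. The paper's Lemma~\ref{lem:tannakian*} sidesteps all of this by observing that $E$ and $F$ are locally trivial \emph{along the orbit} $Gx$, extending the intertwiner only to a small neighborhood $U\ni x$ via ordinary Tietze on matrix entries in local frames, and averaging with a cutoff supported in $U$ rather than with a global normalizing function. That route is more elementary and avoids the need to topologize any internal-hom bundle globally.
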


As a means toward demonstrating our theorem, in addition to Lemma \ref{lem:tannakian} and Corollary \ref{thm:enough}, we shall need the following.

\begin{lem}\label{lem:tannakian*} Let\/ $R = (E,R)$,~$S = (F,S)$ be objects of\/ $\Rep(G)$ for some proper groupoid\/ $G \tto X$. Given any\/ $G(x)$-equivariant linear map\/ $\lambda: E_x \to F_x$ between the fibers of\/ $E$,\ $F$ at\/ $x \in X$, one can find intertwiners\/ $L: R \to S \in \Rep(G)$ for which\/ $L_x = \lambda$. \end{lem}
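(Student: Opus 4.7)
The approach is to realize intertwiners $R \to S$ in $\Rep(G)$ as the $G$-invariant continuous sections of the finite-rank Banach bundle $\Hom(E,F) \to X$ (fiber $\L(E_y,F_y)$ at $y$), on which $G$ acts unitarily by $g \cdot T = S(g)TR(g)^{-1}$; under this identification $\lambda \in \Hom(E,F)_x$ is $G(x)$-fixed by hypothesis, and one seeks a global $G$-invariant section through $\lambda$. The plan has three steps: first propagate $\lambda$ $G$-equivariantly along the orbit $O = Gx$; second, extend the resulting section to all of $X$ by Fell's version of Tietze; third, average by a normalizing function against the Haar system so as to enforce $G$-invariance globally without disturbing the values on $O$.

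For the first step, I set $L^{O}(y) = g \cdot \lambda$ for any $g$ with $sg = x$, $tg = y$; the $G(x)$-invariance of $\lambda$ makes this independent of the choice of $g$. Continuity of $L^{O}: O \to \Hom(E,F)$ rests on the fact that, by properness, the target-restricted map $t: G_x \to Gx$ is an open surjection (Example \ref{exms:P2}\textit{b}) and hence a topological quotient of $G_x$ by the right action of $G(x)$. The continuous, $G(x)$-invariant map $G_x \to \Hom(E,F)$, $g \mapsto g\cdot\lambda$, covering $t: G_x \to Gx$, therefore descends to a continuous $L^{O}$ on $O$.

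For the second step, since $O$ is closed in $X$ (properness again), Fell's version of Tietze's extension theorem \cite[p.~104, Thm.~10.7]{Fel77} yields a continuous extension $\tilde L \in \Gamma(X;\Hom(E,F))$ of $L^{O}$. For the third step, let $c$ be a normalizing function for $G \tto X$ [as in section \ref{sec:almost}], and define
\[
    L(y) \;=\; \integral_{th=y} c(sh)\, S(h)\tilde L(sh)R(h)^{-1} \der h.
\]
Continuity of $L$ as a section of $\Hom(E,F)$ follows from the Haar integration functional \eqref{eqn:Haar}, applied with $Y = X$, $F = \Hom(E,F)$, to $\vartheta(y,h) = S(h)\tilde L(sh)R(h)^{-1}$. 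The intertwiner identity $L(tg)R(g) = S(g)L(sg)$ follows from the left-invariance of the Haar system together with the normalization $\integral c(sh)\der h = 1$, by the routine substitution $h \mapsto gh$. Finally, for $y \in O$ and any $h \in G^{y}$ one has $sh \in O$, so $\tilde L(sh) = L^{O}(sh)$, and the $G$-equivariance of $L^{O}$ gives $S(h)L^{O}(sh)R(h)^{-1} = L^{O}(y)$; integrating against $c(sh)$ with total mass one yields $L\mathbin| O = L^{O}$, and in particular $L_x = \lambda$.

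The main technical point is the first step: without properness, the map $t: G_x \to Gx$ need not be a topological quotient, and there would be no reason for the pointwise prescription to produce a continuous section on $O$. Everything after that—Tietze extension and the averaging—is formally identical to arguments already used in section \ref{sec:local}, and goes through verbatim for finite-rank Hilbert bundles.
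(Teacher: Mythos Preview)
Your argument is correct and follows the same three-step scheme as the paper's proof: propagate $\lambda$ equivariantly along the orbit $O=Gx$, extend by Tietze, and average against the Haar system to obtain a global intertwiner agreeing with $\lambda$ at $x$. The only differences are implementational---you work directly with the Hom bundle $\Hom(E,F)\cong\cnj{E}\otimes F$ and apply Fell's Tietze globally together with a normalizing function, whereas the paper first exploits local triviality of $E\mathbin|O$, $F\mathbin|O$ to extend via the ordinary Tietze theorem to an open $U\ni x$ and then averages against a bump function supported in $U$ and normalized only at $x$.
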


\begin{proof}[Proof of the lemma] By properness, the orbit $O = Gx$ is a closed subspace of $X$. The restriction of $E$ along $O$ is a locally trivial Hilbert bundle $E \mathbin| O \to O$ of finite rank because, by the transitivity of $G \mathbin| O \tto O$, the dimension of its fibers is locally constant and so around each $y \in O$ any local frame of continuous cross-sections spanning $E_y$ constitutes a local trivialization. The same considerations apply to $F$. There is one and only one extension of $\lambda$ to an intertwiner $L: R \mathbin| O \to S \mathbin| O$, namely, \[%
	L_{tg} = S(g) \circ \lambda \circ R(g)^{-1} \text{,\quad $g \in G_x$;}
\] here, the right-hand side is independent of $g$ and thus defines a morphism $E \mathbin| O \to F \mathbin| O$ in $\Hil(O)$. Since $O \subset X$ is closed and $E \mathbin| O$,\ $F \mathbin| O$ are locally trivial, for some open $U \ni x$ this morphism can be extended to one of $E \mathbin| U$ to $F \mathbin| U$ in $\Hil(U)$; such extensions can be obtained by following orthogonal projection onto a local frame spanning $E$ at $x$ with an application of the ordinary Tietze extension theorem (for functions) to the continuous coefficients of a local matrix incarnation of our morphism $E \mathbin| O \to F \mathbin| O$.

So let $M: E \mathbin| U \to F \mathbin| U \in \Hil(U)$ be a local extension of $\lambda$ around $x$ which is $G$-equivariant along $O \cap U$. Let $c: X \to \R_{\geq 0}$ be a continuous function satisfying $c(x) > 0$, $\supp c \subset U$; possibly after rescaling it, we may suppose it satisfies \(%
	\integral_{th=x} c(sh) \der h = 1
\) as well. Let $\tilde{M}: E \to F \in \Hil(X)$ be given by $\tilde{M}_u = c(u)M_u$ for $u \in U$ and by zero elsewhere. Our considerations in section \ref{sec:almost} about Haar integration functionals \eqref{eqn:Haar} being valid not only when $c$ is a normalizing function but also more generally when it is a continuous function for which $t: \supp(c \circ s) \to X$ is a proper map, we define a new morphism $L: E \to F$ in $\Hil(X)$ by setting \[%
	L_ve = \integral_{th=v} c(sh)S(h)\tilde{M}_{sh}R(h)^{-1}e \der h \in F_v
		\text{,\quad $v \in X$, $e \in E_v$.}
\] It is straightforward to check that our $L$ intertwines $R$ and $S$ and that $L_x = \lambda$. \end{proof}

\begin{rmk*} There is no need to assume $X$ second countable or paracompact in order for our lemma to hold; our avoiding that kind of assumptions is the reason why the above proof happens to be slightly more technical than that of \cite[Lem.~5.1]{2008a}. \end{rmk*}

\begin{proof}[Proof of the theorem] In virtue of Corollary \ref{thm:enough} we know that the complex tensor functor $\varPhi_x: \Rep(G) \to \VSp$ and the continuous homomorphism $G(x) \to \End^{\cnj\otimes}(\varPhi_x)$ which $\pi$ induces upon restriction satisfy the condition (i) of Lemma \ref{lem:tannakian}; now thanks to our last lemma we also know they satisfy the other condition, (ii). Application of Lemma \ref{lem:tannakian} then guarantees that $\pi$ restricts to a bijection of $G(x)$ onto $\Bid(G)(x)$ for every $x$ in $X$. A simple argument similar to the one we employed in the proof of Lemma \ref{lem:tannakian*} shows that $G$-orbits can be separated by means of continuous invariant functions on $X$. As a consequence, since these functions are precisely the self-intertwiners of the trivial representation of $G \tto X$ on $X \times \C \to X$, we have $\Bid(G)(x,y) = \emptyset$ for all $x$,~$y \in X$ for which $G(x,y) = \emptyset$. The bijectivity of $\pi: G \to \Bid(G)$ is proven.

There remains to be shown that our continuous homomorphism $\pi$ is bicontinuous, equivalently, that its inverse is continuous, equivalently, that $\pi$ is an open map. So, let $\varOmega$ be an open neighborhood of $g$ in $G$: our claim is that $\pi(\varOmega)$ is a neighborhood of $\pi(g)$ in $\Bid(G)$. Let us write $x = sg$, $y = tg$. The set $g^{-1} \cdot [G(x,y) \cap \varOmega]$ is an open neighborhood of $1x$ in $G(x)$. By the classical representation theory of compact groups \cite[p.~21, Cor.~4.4]{Bre72}, for some finite-dimensional complex Hilbert space $\Ban{E}$ there exists some unitary representation $S: G(x) \to \GU(\Ban{E})$ with kernel contained in this open neighborhood equivalently with \(%
	g \cdot \ker S \subset \varOmega
\). By Corollary \ref{thm:global*} there exists some extension of $S$ to a complex unitary representation $R: s^*E \simto t^*E$ on a finite-type complex Hilbert bundle $E \to X$. Let $\zeta_1$,\ \textellipsis,\ $\zeta_n$, resp., $\eta_1$,\ \textellipsis,\ $\eta_n$, be continuous cross-sections of $E \to X$ that provide a vector basis $\zeta_1(x)$,\ \textellipsis,\ $\zeta_n(x)$, resp., $\eta_1(y)$,\ \textellipsis,\ $\eta_n(y)$, for $E_x$, resp., $E_y$. Put $a_{ij} = \langle R(g)\zeta_i(x),\eta_j(y)\rangle$. As long as it is possible to find any number $\epsilon > 0$ and compact neighborhoods $A \ni x$, $B \ni y$ for which the set \[%
	F^{A,B}_\epsilon = \{h \in G: \max_{i,j}{}\lvert\langle R(h)\zeta_i(sh),\eta_j(th)\rangle - a_{ij}\rvert \leq \epsilon,~sh \in A,~th \in B\}
\] is contained in $\varOmega$, by definition of the topology of $\Bid(G) \tto X$ and by the continuity of its source and target, $\pi(F^{A,B}_\epsilon)$ will be a neighborhood of $\pi(g)$ in $\Bid(G)$ and, because of the bijectivity of $\pi$, it will be contained in $\pi(\varOmega)$ so that, as desired, $\pi(\varOmega)$ will be a neighborhood of $\pi(g)$. In order to establish the existence of $\epsilon$,\ $A$,\ and $B$, we use the properness of $G \tto X$, as follows. Each set \(%
	F^{A,B}_\epsilon \smallsetminus \varOmega
\) is closed inside the compact set $s^{-1}(A) \cap t^{-1}(B)$ and so is itself compact. These compact sets obviously form a directed family under reverse inclusion. Their intersection is empty, for it is contained in \(%
	\{h \in G(x,y): R(h) = R(g)\} \smallsetminus \varOmega = g \cdot \ker S \smallsetminus \varOmega
\). Then $F^{A,B}_\epsilon \smallsetminus \varOmega = \emptyset$, equivalently, $F^{A,B}_\epsilon \subset \varOmega$, for at least one choice of $\epsilon$,\ $A$,\ and $B$. \end{proof}

\section{Appendix}\label{sec:appendix}

Our purpose is to make a number of interrelated considerations which provide additional insight into the theory discussed in the preceding sections and which further motivate it.

\begin{lem}\label{lem:appendix} Let\/ $G \xto{p} X$ be a topological group bundle with locally compact Hausdorff\/ $G$, and let\/ $\varphi \geq 0$ be a continuous function on\/ $G$ such that\/ $\supp\varphi \cap p^{-1}(C) \subset G$ is compact for every compact\/ $C \subset X$ and such that\/ $\varphi(1x) = 1$ for every\/ $x$ in\/ $X$. For each\/ $x$, let\/ $\mu^x$ be left Haar measure on\/ $G_x$ normalized so that\/ $\integral \varphi \der\mu^x = 1$. Then\/ $\mu = \{\mu^x\}$ is a (continuous) Haar measure system on\/ $G$. \end{lem}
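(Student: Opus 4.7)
First, axioms (i) and (ii) are essentially free for a group bundle: since $s = t = p$, the support axiom (i) reduces to the standard full-support property of nonzero left Haar measure on a locally compact group (so $\supp \mu^x = G_x = G^x$), and the left-invariance axiom (ii), which for a general groupoid reads $\int \psi(gh) \, d\mu^{sg}(h) = \int \psi(h) \, d\mu^{tg}(h)$, collapses under $sg = tg = pg$ to left invariance of $\mu^{pg}$ on the single group $G_{pg}$, which is built into the definition of left Haar. The normalization itself is unambiguous: $\varphi(1x) = 1$ and continuity of $\varphi$ yield an open set $U \ni 1x$ where $\varphi > 1/2$, so $U \cap G_x$ is a nonempty open subset of $G_x$ and any left Haar $\nu^x$ assigns it positive mass; the support hypothesis applied with $C = \{x\}$ makes $\varphi|_{G_x}$ compactly supported, so $\int \varphi \, d\nu^x \in (0, \infty)$; hence $\mu^x$ is uniquely determined.

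The substantive content lies in axiom (iii): for every $\psi \in C_c(G)$, the function $F_\psi(x) := \int \psi \, d\mu^x$ is continuous on $X$. The first step of my plan is to exploit normalization to replace $F_\psi(x)$ by a groupoid-intrinsic expression. Writing $F_\psi(x) = F_\psi(x) \cdot \mu^x(\varphi)$, expanding via Fubini, and applying left invariance in the inner integral via the substitution $k \mapsto hk$ yields
\begin{equation*}
F_\psi(x) \;=\; \iint_{G_x \times G_x} \psi(h)\,\varphi(hk)\, d\mu^x(h)\, d\mu^x(k),
\end{equation*}
where the integrand $\Theta(h,k) := \psi(h)\varphi(hk)$ is a continuous, compactly supported function on the locally compact fiber product $G \ftimes{p}{p} G$, built purely from $\psi$, $\varphi$, and the continuous groupoid composition with no reference to any local trivialization of $G \to X$. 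From here I would follow the strategy of Renault \cite[Lem.~1.3]{Ren91} (explicitly cited in the paper): given $\epsilon > 0$ and $x_0 \in X$, approximate $\Theta$ uniformly on a compact neighborhood of $G_{x_0} \times G_{x_0}$ in $G \ftimes{p}{p} G$ by "elementary" building blocks whose fiberwise integrals are manifestly continuous in $x$---either finite tensor sums, obtained via the standard technique of \cite[p.~48, Prop.~1.1]{Ren80} (also cited by the paper), or Haar-covering approximants of the form $h \mapsto (f \circ p)(h)\,\varphi(h)$ with $f \in C_c(X)$, whose $\mu^x$-integral is identically $f(x)$ by the normalization. Fell's version of Tietze's extension theorem \cite[p.~104, Thm.~10.7]{Fel77} is then used to patch pointwise fiberwise data into honest continuous functions in $C_c(G)$, and careful $\epsilon$-bookkeeping gives $|F_\psi(x) - F_\psi(x_0)| < \epsilon$ for $x$ in a neighborhood of $x_0$.

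The main obstacle, and where the real technical work in Renault's argument lies, is the absence of continuous local sections $X \to G$ through non-unit points of $G$: one cannot continuously choose group elements $g(x) \in G_x$ varying with $x$, so a naive reduction to the trivial-bundle case is unavailable. The intrinsic identity of the previous paragraph is designed precisely to sidestep this obstacle, replacing dependence on sections with the continuous groupoid composition on $G \ftimes{p}{p} G$. Combined with Fell's Tietze theorem---which lets us extend continuous partial data from the closed fiber $G_{x_0}$ to an open neighborhood---this yields the desired continuity of $F_\psi$ at every $x_0$, completing axiom (iii).
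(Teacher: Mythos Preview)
Your proposal is correct and follows essentially the same approach as the paper: the paper's proof is nothing more than a citation of \cite[Lem.~1.3]{Ren91} (noting that Renault's argument adapts one of Fell in \cite[appendix]{Gli62}), and you have sketched the backbone of that very argument---the normalization identity $F_\psi(x) = \iint \psi(h)\varphi(hk)\,d\mu^x(h)\,d\mu^x(k)$ followed by approximation on the composable-pairs space. Your write-up in fact supplies more detail than the paper itself does.
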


\begin{proof} The reader is referred to \cite[p.~6, Lem.~1.3]{Ren91}. (The demonstration provided in loc.~cit.~is an adaptation of an argument attributed to Fell \cite[appendix]{Gli62}.) \end{proof}

Hence $G \xto{p} X$ is a \emph{locally compact group bundle}---in the sense of Example (c) of section \ref{sec:representations}. It is not hard to manufacture specimens of locally compact group bundles that are proper but are not locally trivial. For instance, let $G \subset \R^2$ be the closed subspace $\{(x,\pm x): x \in \R\}$, let $G \xto{p} X$ be the restriction of the first coordinate projection $\R^2 \to \R$, and let $G \times_X G \to G$ be the multiplication law given by \(%
	(x,ax) \cdot (x,bx) = (x,abx)
\), where $a$,~$b \in \{\pm 1\}$; then $p$ is open (in fact, it admits continuous cross-sections through each point of $G$), as well as continuous, and $\mu^x$ is counting measure for $x = 0$ and is half of it for $x \neq 0$. There are obvious variants of this example, say, with $G_x$ isomorphic to a circle group for $x \neq 0$. On the other hand, isotropy bundles of smooth actions of compact Lie groups on manifolds provide a host of examples of isotropic Hausdorff groupoids with locally compact arrow space and with proper, though not open, source map; for these, the system of normalized left Haar measures cannot possibly be a continuous one.

\begin{prop}\label{prop:appendix} Suppose a span of continuous homomorphisms of locally compact groupoids
\begin{equation}
 \begin{split}
\xymatrix{%
	G \ar@{<-}[r]^\phi
	\ar@<-.33ex>[d] \ar@<+.33ex>[d]
	&	H \ar[r]^-\psi
		\ar@<-.33ex>[d] \ar@<+.33ex>[d]
		&	K \times Z
			\ar@<-.33ex>[d] \ar@<+.33ex>[d]
\\	X \ar@{<-}[r]^\phi
	&	Y \ar[r]^\psi
		&	Z
}\end{split}
\label{eqn:appendix}
\end{equation}
is given for which the following hypotheses are satisfied:
\begin{enumerate}
\def\labelenumi{\upshape(\roman{enumi})}
 \item $\phi$ fulfills the conditions\/ \textup{PF1} and\/ \textup{PF2} of section\/ {\upshape \ref{sec:images}}.
 \item $G \tto X$ has proper anchor map\/ $(t,s): G \to X \times X$, its isotropy groups are (isomorphic to) Lie groups, and they form a topological group bundle over\/ $X$ that admits continuous local cross-sections through each arrow.
 \item $K \times Z \tto Z$ is a transformation groupoid for some locally compact group\/ $K$ which is either compact, or connected, or else a Lie group.
\end{enumerate}
Let\/ $Y_1$ be the set of all\/ $y \in Y$ for which\/ $\psi\bigl(H(y)\bigr) \subset \{1\} \times Z$, where\/ $H(y)$ is the isotropy group of\/ $H \tto Y$ at\/ $y$. Let\/ $X_1$ be the set of all\/ $x \in X$ such that\/ $x \in G \cdot \phi y$ for some\/ $y$ in\/ $Y_1$. Then\/ $X_1$ is a closed subset containing the entire connected component of each one of its points. \end{prop}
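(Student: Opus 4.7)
I would split the proof into the \emph{closedness} of $X_1$ and the \emph{containment of connected components}, handling the first cleanly and leaving the second as the substantive core.

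\textbf{Closedness.} PF1, applied fibrewise, makes $\phi\colon H(y)\simto G(\phi y)$ a topological isomorphism, so each $H(y)$ is a compact Lie group. The subset $Y_1\subset Y$ is $H$-invariant: if $h\in H$ connects $y\to y'$, continuous conjugation in $K\times Z$ by $\psi h$ carries triviality of the $K$-component of $\psi|_{H(y)}$ onto that of $\psi|_{H(y')}$. Combined with PF1 this yields the identifications $X_1=\phi(Y_1)$, $Y_1=\phi^{-1}(X_1)$, and $q^{-1}(X_1)=G\ftimes{s}{\phi}Y_1$. To show $Y_1$ closed in $Y$, I pick $y\notin Y_1$ and $k\in H(y)$ with $\psi k\notin\{1\}\times Z$; hypothesis (ii) furnishes a continuous local section $\gamma_G$ of the isotropy bundle $I(G)\to X$ through $\phi k$, and PF1 lifts it to a continuous local section $\gamma$ of $I(H)\to Y$ through $k$. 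Since $\{1\}\times Z$ is closed in $K\times Z$, the continuous $\psi\gamma$ avoids it on an open neighborhood $V\ni y$, giving $V\subset Y\smallsetminus Y_1$. Hence $Y_1$ is closed, $G\ftimes{s}{\phi}Y_1$ is closed in $G\ftimes{s}{\phi}Y$, and by PF2 (topological quotient) $X_1$ is closed in $X$.

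\textbf{Connected components via Lie-group rigidity.} I would first show that $Y_1$ is a union of connected components of $Y$. Fix $y_0\in Y_1$ and set $G_0:=H(y_0)$, a compact Lie group. The properness of $(t,s)\colon G\to X\times X$ yields upper semicontinuity of the isotropy bundle at $y_0$; combined with hypothesis (ii) and PF1 one obtains, on a small neighborhood $V$ of $y_0$, continuous local cross-sections of $I(H)\to Y$ through enough elements of $H(y_0)$ to view $\sigma_y:=\mathrm{pr}_K\circ\psi\colon H(y)\to K$ as a continuous family of homomorphisms $G_0\to K$ parametrized by $y\in V$. When $K$ is a Lie group, the classical rigidity for homomorphisms from a compact Lie group to a Lie group says $\mathrm{Hom}_{\mathrm{cts}}(G_0,K)/K$ is discrete; combined with the continuity just noted, the $K$-conjugacy class of $\sigma_y$ is locally constant on $V$. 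Since $\sigma_{y_0}$ is the trivial class (itself a singleton), $\sigma_y$ is trivial throughout the connected component of $y_0$ in $V$, and the argument propagates to the entire connected component of $y_0$ in $Y$. When $K$ is compact but not Lie, Peter--Weyl presents $K=\varprojlim K_i$ as an inverse limit of compact Lie groups; when $K$ is connected, Gleason--Yamabe gives an analogous Lie presentation. Applying the Lie case to each projection $\pi_i\circ\sigma_y$ shows that $Y_1^{(i)}:=\{y:\pi_i\sigma_y=1\}$ is a union of connected components of $Y$, hence so is $Y_1=\bigcap_i Y_1^{(i)}$.

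\textbf{Transfer to $X_1$ and main obstacle.} Once $Y_1$ is known to be a union of connected components of $Y$, I transfer the conclusion to $X_1$. The preimage $q^{-1}(X_1)=G\ftimes{s}{\phi}Y_1$ is pulled back from $Y_1$ via the second projection $G\ftimes{s}{\phi}Y\to Y$, an open continuous surjection; preimages of unions of connected components under such maps are themselves unions of connected components of the domain. Combining this with the fact that $q^{-1}(X_1)$ is $q$-saturated (by the $H$-invariance of $Y_1$ established above) and that $q$ is an open continuous surjection and a topological quotient (PF2), a routine check on each connected component $D$ of $X$ using the decomposition $q^{-1}(D)=q^{-1}(D\cap X_1)\sqcup q^{-1}(D\smallsetminus X_1)$ into two unions of connected components of $G\ftimes{s}{\phi}Y$ forces $X_1$ to be a union of connected components of $X$, whence it contains the connected component of each of its points. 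The principal obstacle is the rigidity step in the Lie case of $K$: one must carefully organize hypothesis (ii)'s local cross-sections, the upper semicontinuity of the isotropy bundle from properness, and the discreteness of $\mathrm{Hom}(G_0,K)/K$ into a precise statement of local constancy of the conjugacy class of $\sigma_y$. The pro-Lie reductions for compact or connected $K$ and the final transfer from $Y_1$ to $X_1$, which depends on delicate interplay between the $H$-saturation property, the fibre structure of $q$, and PF2, are next in line in subtlety.
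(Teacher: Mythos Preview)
Your closedness argument is essentially correct and in fact mirrors how the paper proves closedness of its auxiliary sets $X_N$: lift a local cross-section of the isotropy bundle of $G$ through a witnessing arrow via PF1, and use continuity.

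The connected-components argument, however, has a genuine gap. You write that local cross-sections of $I(H)\to Y$ through ``enough elements of $H(y_0)$'' let you view $\sigma_y = \pr_K\circ\psi|_{H(y)}$ as a continuous family of homomorphisms $G_0 \to K$ with $G_0 = H(y_0)$ fixed. This presupposes that the nearby isotropy groups $H(y)$ can be continuously identified with $G_0$, but hypothesis (ii) gives only local sections through individual arrows, not a local trivialization of the isotropy bundle, and in the situations the proposition is meant to cover the groups $H(y)$ genuinely change isomorphism type. Indeed, the very first application (Corollary~\ref{cor:appendix}) takes $G(x)=\{1x\}$ trivial at one point while $G(u)$ is nontrivial nearby; here $G_0$ is the trivial group and your ``family of homomorphisms $G_0\to K$'' carries no information whatsoever about $\sigma_y$ on the nontrivial $H(y)$. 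The discreteness of $\Hom(G_0,K)/K$ is thus beside the point: there is no way to feed the varying $H(y)$ into a rigidity statement for a \emph{fixed} source group.

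The paper circumvents this by working not with homomorphisms out of $H(y)$ but with the \emph{image} subgroups $k\bigl(H(y)\bigr)\subset K$. For each compact normal $N\trianglelefteq K$ with $K/N$ Lie it defines $Y_N=\{y:k(H(y))\subset N\}$ and its saturation $X_N$, and proves each $X_N$ is \emph{clopen}. Openness of $Y_N$ comes from the Montgomery--Zippin fact that in a Lie group a small enough neighborhood of a compact subgroup contains no compact subgroups other than its conjugates (so any compact subgroup of $K$ landing in a small neighborhood of $N$ already lies in $N$), combined with a properness/compactness argument forcing $k(H(y))$ into such a neighborhood for $y$ near $y_0$. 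Closedness of $X_N$ uses the local-section hypothesis much as you do. Finally, for $x\notin X_1$ one uses Gleason--Yamabe (or Peter--Weyl, or triviality in the Lie case) to find $N$ with $x\notin X_N\supset X_1$, separating $x$ from $X_1$ by a clopen set. This avoids any need to parametrize the $H(y)$ by a fixed model group, which is where your approach breaks down. Your transfer step from $Y_1$ to $X_1$ via components of $q^{-1}(D)$ is also not fully justified (open quotient maps need not have connected fibers), but that is secondary to the gap above.
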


\begin{proof} Let us write $\psi(h) = \bigl(k(h),\psi(sh)\bigr)$. For any compact normal subgroup $N$ of $K$ such that the quotient $K/N$ is a Lie group, let $Y_N$ be the set of all $y \in Y$ such that $k\bigl(H(y)\bigr) \subset N$, and let $X_N$ be the image of $G \ftimes{s}{\phi} Y_N$ under the surjection \(%
	q: G \ftimes{s}{\phi} Y \to X$, $(g,y) \mapsto tg
\) of PF2; we claim that $X_N$ is a \emph{clopen}, i.e., simultaneously open and closed, subset of $X$. Once this is proven, our proposition will be proven too, on account of the following reasoning. Suppose $x_1 \in X_1$, $x \notin X_1$ are such that there exists some choice of $N$ for which $X_N$ does not contain $x$. Then, since $X_N$ is clopen and contains all of $X_1$, $x$ cannot possibly be in the connected component of $x_1$. Now, in order to find such an $N$ for any given $x_1 \in X_1$ and $x \notin X_1$, let us fix $y \in Y$ so that $x \in G \cdot \phi y$. By our hypothesis PF1 plus (ii), $H(y) \simeq G(\phi y)$ is a compact Lie group, and so is its homomorphic image $k\bigl(H(y)\bigr)$. Since a Lie group has no small subgroups, there must be $U \ni 1$ open in $K$ for which $k\bigl(H(y)\bigr) \cap U$ contains no subgroup of $k\bigl(H(y)\bigr)$ other than $\{1\}$. By the classical work of Gleason, Yamabe, and others on Hilbert's Fifth Problem \cite{Bor57}, we can find an open subgroup $K' \subset K$ and a compact normal subgroup $N$ of $K'$ contained in $U$ such that $K'/N$ is a Lie group. Thus, for $K$ connected, there is a compact normal $N$ contained in $U$ for which $K/N$ is a Lie group. The same conclusion is valid for compact $K$, thanks to \cite[p.~21, Cor.~4.4]{Bre72}, as well as for $K$ a Lie group, for trivial reasons. We necessarily have $k\bigl(H(y)\bigr) \cap N = \{1\}$ and hence $k\bigl(H(y)\bigr) \not\subset N$, on account of the nontriviality of $k\bigl(H(y)\bigr)$ implicit in our assumption that $x \notin X_1$ and, consequently, $y \notin Y_1$. Thus $y \notin Y_N$. But this evidently implies that $x \notin X_N$, since $Y_N$ is an invariant subset of $Y$ by the normality of $N$.

Let us now once and for all fix an arbitrary compact normal subgroup $N$ of $K$ for which $K/N$ is a Lie group, and write $\pi: K \to K/N$ for the quotient projection. By a well-known result of Montgomery and Zippin (cf.~\cite[p.~87, Cor.~5.6]{Bre72}, \cite[p.~216]{MZ55}), for any other compact subgroup $G$ of $K$ there exists some open neighborhood $\varOmega$ of $\pi(G)$ in $K/N$ with the property that each compact subgroup of $K/N$ contained in $\varOmega$ is conjugated to a subgroup of $\pi(G)$. Therefore, if $L \subset K$ is any compact subgroup contained in $W = \pi^{-1}(\varOmega) \supset G$, then $hLh^{-1} \subset GN$ for some $h$ in $K$. Because of the normality of $N$, upon taking $G = N$ we see in particular that for some open neighborhood $W$ of $N$ in $K$ it must be true that $L \subset N$ whenever $L \subset W$.

In order to prove that $X_N$ is open, it will suffice to prove that so is $Y_N$, for the former is the image of $G \ftimes{s}{\phi} Y_N$ under the open map $q$ of PF2. In virtue of the considerations we made in the previous paragraph, it will be enough to show that for any given $y_0 \in Y_N$ there is some open neighborhood $V$ of $y_0$ in $Y$ such that $k\bigl(H(v)\bigr) \subset W$ for all $v$ in $V$. Let $\Delta Y \subset Y \times Y$ be the diagonal, consisting of all pairs $(y,y)$. Its inverse image $a^{-1}(\Delta Y) \subset H$ under the anchor map $a: H \to Y \times Y$, $h \mapsto (th,sh)$ is the isotropy subbundle of $H \tto Y$, and our claim is that there exists $V \ni y_0$ open for which $a^{-1}(\Delta V) \subset k^{-1}(W)$. For each compact neighborhood $C$ of $y_0$ we can by PF1 find some continuous map $C \ftimes{\phi}{t} G \ftimes{s}{\phi} C \to H$ with image $H \mathbin| C = a^{-1}(C \times C)$, and since by the properness of $G \tto X$ the domain of this map is compact, so must be its image. The set $F^C := a^{-1}(\Delta C) \smallsetminus k^{-1}(W)$, being closed inside the compact set $H \mathbin| C$, must then itself be compact. Moreover, the intersection of all $F^C$, being contained in \[%
\textstyle%
	a^{-1}\bigl(\Delta\left.\bigcap C\right.\bigr) \smallsetminus k^{-1}(W)
		= a^{-1}(\Delta\{y_0\}) \smallsetminus k^{-1}(W)
		= H(y_0) \smallsetminus k^{-1}(W)
		= \emptyset,
\] must be empty. Thus, since the $F^C$ form a directed family under reverse inclusion, by the finite intersection property at least one of them must be empty and so there has to be some $C$ whose interior $V$ verifies our claim.

As to the closedness of $X_N$, let $\bar{x} \in \overline{X}_N$ be a point in its closure. By PF2, $\bar{x} = t\bar{g} = q(\bar{g},\bar{y})$ for some $(\bar{g},\bar{y}) \in G \ftimes{s}{\phi} Y$. It will then suffice to prove that $\bar{y}$ lies in $Y_N$. So, let $\bar{h} \in H(\bar{y})$ be given: we must show that $k(\bar{h}) \in N$. By our hypotheses on $G \tto X$, there exists some continuous map $g: U \to G$ defined in an open neighborhood $U$ of $\phi\bar{y}$ with $g(u) \in G(u)$ for all $u$ in $U$ and with $g(\phi\bar{y}) = \phi\bar{h}$. By PF1 as applied to the following maps of domain $V = \phi^{-1}(U) \ni \bar{y}$, \[%
	Y \times Y \longfrom V \longto G,~(v,v) \longmapsfrom v \longmapsto g(\phi v),
\] there exists some continuous map $h: V \to H$ such that $h(v) \in H(v)$ for all $v$ in $V$ and such that $\phi h(\bar{y}) = g(\phi\bar{y})$ and, hence, $h(\bar{y}) = \bar{h}$. Given $W \ni k(\bar{h})$ open, we can find $B \subset V$, $B \ni \bar{y}$ open for which $h(B) \subset k^{-1}(W)$. By openness of $q$, we have that $q(G \ftimes{s}{\phi} B)$ is an open neighborhood of $\bar{x}$ and thus intersects $X_N$. We can then find some $(g,y) \in G \ftimes{s}{\phi} B$ with $tg \in X_N$ and therefore some $y$ in $Y_N \cap B$, a fortiori, some $k\bigl(h(y)\bigr)$ in $N \cap W$. It follows that $k(\bar{h})$ lies in the closure of $N$ in $K$ and, consequently, in $N$ itself. \end{proof}

\begin{cor}\label{cor:appendix} Let\/ $G \tto X$ be as in the preceding proposition. Suppose\/ $X$ is locally connected, as well as connected. Let\/ $R: s^*E \simto t^*E \in \BB(G)$ be any representation of\/ $G \tto X$ on a locally trivial Banach bundle\/ $E \to X$ of locally finite rank. If\/ $R$ is trivial at\/ $x \in X$, in the sense that\/ $G(x) \subset \ker R$, then it is globally trivial viz.~trivial at every point of\/ $X$. \end{cor}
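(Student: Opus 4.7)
The plan is to apply Proposition~\ref{prop:appendix} to a span built out of the bundle $E \to X$ and the representation $R$, so as to identify the set of points where $R$ is trivial with the set $X_1$ featuring in the conclusion of that proposition. Since $X$ is connected and $E \to X$ is locally trivial of locally finite rank, the fibers of $E$ all have the same dimension~$n$. Fix an open cover $\{U_\alpha\}$ of $X$ by trivializing opens, with $\GL(n,\R)$-compatible trivializations $\tau_\alpha\colon U_\alpha \times \R^n \simto E \mathbin| U_\alpha$.

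First I construct a Čech-like $H \tto Y$: put $Y := \bigsqcup_\alpha U_\alpha$ with the natural surjective étale map $\phi\colon Y \to X$, and let $H := Y \ftimes{\phi}{t} G \ftimes{s}{\phi} Y$ with source $(y_1,g,y_2) \mapsto y_2$, target $(y_1,g,y_2) \mapsto y_1$, and composition inherited from $G$. The projection $\phi\colon H \to G$, $(y_1,g,y_2) \mapsto g$ covers $\phi\colon Y \to X$. Equip $H \tto Y$ with the Haar system obtained from that of $G$ combined with counting measure on the discrete fibers of $\phi$; continuity is immediate from the étale property. Verifying that $\phi\colon H \to G$ fulfills PF1 and PF2 of section~\ref{sec:images} is a matter of unwinding definitions: PF1 holds because $(y_1(v), g(v), y_2(v))$ is visibly the unique lift of any given data, and PF2 follows from the openness of $\phi$ and of $t\colon G \to X$. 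Next I define $\psi\colon H \to \GL(n,\R)$ by
\[
\psi(y_1, g, y_2) = \tau_{\alpha_1,y_1}^{-1} \circ R(g) \circ \tau_{\alpha_2,y_2}, \quad y_i \in U_{\alpha_i},
\]
and view $\GL(n,\R)$ as the transformation groupoid $K \times Z$ with $K = \GL(n,\R)$ (a Lie group) and $Z$ a one-point space. The map $\psi$ is a continuous groupoid homomorphism: the trivializations at the middle base point cancel in compositions, and the locally finite rank hypothesis ensures that the Banach-bundle continuity of $R$ passes to operator-norm continuity of the matrix expression.

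Finally I apply Proposition~\ref{prop:appendix} to the span $G \xfrom{\phi} H \xto{\psi} K \times Z$ (hypothesis~(ii) holds by the standing assumption on $G$ inherited from the corollary's reference, and~(iii) because $\GL(n,\R)$ is a Lie group). A direct computation of $Y_1 = \{y \in Y : \psi(H(y)) \subset \{1\} \times Z\}$ shows it consists exactly of those $y$ for which $R$ is trivial on $G(\phi y)$; consequently, noting that $\{u \in X : G(u) \subset \ker R\}$ is $G$-invariant because $R$ is a representation (one has $G(tg) = g G(sg) g^{-1}$), one gets $X_1 := G \cdot \phi(Y_1) = \{u \in X : G(u) \subset \ker R\}$. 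The proposition then tells us $X_1$ is a closed subset of $X$ containing the connected component of each of its points. Since $x \in X_1$ by assumption and $X$ is connected, the connected component of $x$ in $X$ is all of $X$, so $X_1 = X$, i.e., $R$ is trivial everywhere. The main obstacle is the careful setup of the Čech-like $H$ (verifying PF1--PF2 and producing a continuous Haar system), but these reduce to étale-map bookkeeping; the conceptually nontrivial step is the recognition of the condition "$R$ trivial at $u$" as the condition $u \in X_1$ for this specific $\psi$.
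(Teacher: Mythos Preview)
Your proof is correct but follows a genuinely different route from the paper's. The paper applies Proposition~\ref{prop:appendix} \emph{locally}: using local connectedness it selects an atlas of trivializations with \emph{connected} domains $U$, and for each such $U$ takes the trivial span $G\mathbin|U \xfrom{=} G\mathbin|U \xto{(R,\id)} \GL(\R^n)\times U$ (so $\phi=\id$, $Z=U$). The proposition then forces the ``trivial at $u$'' locus to swallow each $U$ that it meets, so that locus is clopen, and connectedness of $X$ finishes. You instead apply the proposition \emph{once, globally}, by first assembling the \v{C}ech pullback groupoid $H=Y\times_X G\times_X Y$ over $Y=\bigsqcup_\alpha U_\alpha$ and taking $Z$ to be a point.

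What each approach buys: the paper's argument keeps $\phi=\id$, so PF1--PF2 and the Haar system on $H$ are free, but it needs connected trivializing opens and hence the local connectedness hypothesis. Your approach pays the price of setting up the pullback groupoid (checking it is locally compact with a continuous Haar system and that $\phi$ satisfies PF1--PF2---all routine since $\phi$ is \'etale), but in return it never uses local connectedness: the proposition directly gives that $X_1$ contains the connected component of $x$, which is all of $X$. So your argument is in fact slightly more general than the paper's. The one place to be a bit more careful than you are is the continuity of the Haar system on $H$: write $H=\bigsqcup_{\alpha,\beta}\,t^{-1}(U_\alpha)\cap s^{-1}(U_\beta)$ as a disjoint union of open pieces of $G$ and observe that for $\varphi\in C_c(H)$ the resulting sum over $\beta$ is locally finite.
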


\begin{proof} By the finite-rank and the connectedness hypothesis $E \to X$ is automatically of class $C^0$ and $R$ is automatically uniformly continuous. By the local connectedness of $X$ we can find some atlas of local trivializations $U \times \R^n \simto E \mathbin| U$ of $E \to X$ with connected domains $U$. If $R$ is trivial at some $u$ in $U$ then by the above proposition as applied to \[%
\xymatrix@C=2.67em{%
	G \mathbin| U \ar@{<-}[r]^=
	\ar@<-.33ex>[d] \ar@<+.33ex>[d]
	&	G \mathbin| U \ar[r]^-{(R,\id)}
		\ar@<-.33ex>[d] \ar@<+.33ex>[d]
		&	\GL(\R^n) \times U
			\ar@<-.33ex>[d] \ar@<+.33ex>[d]
\\	U \ar@{<-}[r]^=
	&	U \ar[r]^=
		&	U
}\] we conclude that $R$ must be trivial at all points of $U$. Thus, the set of all $x$ at which $R$ is trivial is an open subset of $X$. But it must also be closed, for, by what we have just concluded, if $U$ contains a point in its complement then all of $U$ must be contained in that complement. Hence, in view of the connectedness of $X$, so long as it is nonempty, this set must be all of $X$. \end{proof}

\begin{exm}\label{exm:appendix} If $G \to \R$ is one of our nontrivial bundles of compact Lie groups with $G_x = \{1x\}$ trivial at $x = 0$, then $G \to \R$ cannot be told apart from the unit groupoid $\R \tto \R$ by looking only at its continuous representations on locally trivial Banach bundles of finite rank. \end{exm}

\paragraph*{Orbispace groupoids.} An \emph{orbispace groupoid} is a locally compact groupoid $G \tto X$ which, in the vicinity of each point $x \in X$, is equivalent as a topological stack to the transformation groupoid $K \times Z \tto Z$ associated with some compact group $K$ of transformations of a locally compact Hausdorff space $Z$ in the sense that over some open neighborhood $U$ of $x$ there exists a span of topological weak equivalences of the kind obtained upon substituting $G \mathbin| U \tto U$ for $G \tto X$ in \eqref{eqn:appendix}. [Cf.~the comments following our enunciation of the properties PF1–3 in section \ref{sec:images}.] The anchor map of an orbispace groupoid is of necessity always proper.

A classical theorem in the theory of compact groups \cite[p.~18, Thm.~4.2]{Bre72} asserts that if $H$ is a closed subgroup of a compact group $K$ and if $R: H \to \GL(\R^m)$ is any finite-dimensional representation of this subgroup then there exist representations $S: K \to \GL(\R^n)$ of dimension $n \geq m$ for which $R$ is contained in the restriction of $S$ to $H$ as a subrepresentation. Now, when $K$ is a group of transformations of a space $Z$ as above and $H = \stab(z)$ is the stabilizer of a point $z \in Z$, the result in question immediately entails a somewhat weaker variant of our local extension theorem for the transformation groupoid $K \times Z \tto Z$ in the finite-dimensional nonprojective case. (An observation to the same effect, although under unnecessarily restrictive hypotheses namely that $K$ be connected Lie, appears in \cite[Prop.~4.19]{Bos11}.) The stronger conclusions of our Theorem \ref{thm:local}, however, do not seem to be within reach of this line of argument except when $\stab(z) = K$ i.e.~$z$ is a fixed point for the $K$~action. On the other hand, our theorem solves the local extension problem for an arbitrary locally compact $K$ acting \emph{properly} on $Z$ in the sense that the corresponding transformation groupoid $K \times Z \tto Z$ is proper, a situation which evidently covers the case of a compact $K$.

In the differentiable setting all proper groupoids $G \tto X$ are orbispace groupoids, in fact, they are such in an even stronger sense [see our remarks at the beginning of section \ref{sec:local}]: for any given $x$, there exist weak equivalences $G \mathbin| U \xfrom{\phi} H \xto{\psi} K \times Z$ under which $U \ni x \xmapsfrom{\phi} y \xmapsto{\psi} z$ with $\stab(z) = K$. On the other hand, examples of nondifferentiable proper groupoids that do not enjoy the latter property abound: we have $\stab(w) \simeq G(u)$ whenever $u \xmapsfrom{\phi} v \xmapsto{\psi} w$, so since $\stab(w) \subset K = \stab(z)$ it must be possible to embed $G(u)$ into $G(x)$ as a subgroup for all $u$ in a neighborhood of $x$; but this cannot be done e.g.~for the nontrivial bundles $G \to \R$ of compact Lie groups which we discussed previously. Actually, our proposition enables us to say more:

\begin{cor}\label{cor:appendix*} Let\/ $G \tto X$ be any locally compact groupoid satisfying the conditions\/ {\upshape (ii)} of\/ {\upshape Proposition \ref{prop:appendix}} and having at least one trivial isotropy group\/ $G(x) = \{1x\}$. Suppose that\/ $X$ is both connected and locally connected. Then\/ $G \tto X$ cannot be an orbispace groupoid unless it is\/ \emph{principal} i.e.~has all its isotropy groups trivial. \end{cor}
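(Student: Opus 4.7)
The plan is to show that, under the assumption that $G \tto X$ is an orbispace groupoid, the set $T = \{u \in X : G(u) = \{1u\}\}$ of points of trivial isotropy is clopen in $X$. Since $x \in T$ by hypothesis and $X$ is connected, this will force $T = X$, which is exactly the conclusion that $G \tto X$ is principal.

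The crux is to reinterpret Proposition \ref{prop:appendix} in the orbispace setting. For any orbispace chart $G \mathbin| U \xfrom{\phi} H \xto{\psi} K \times Z$ around a point $u_0 \in X$, both $\phi$ and $\psi$ are topological weak equivalences, so each restricts to an isomorphism on isotropy groups, namely $H(y) \simto G(\phi y)$ and $H(y) \simto \stab(\psi y)$. Consequently $\psi(H(y)) \subset \{1\} \times Z$ if and only if $H(y)$ is trivial if and only if $G(\phi y)$ is trivial. Since isotropy groups are conjugate along $G \mathbin| U$-orbits, the set $X_1$ of Proposition \ref{prop:appendix}, applied to the restricted groupoid $G \mathbin| U \tto U$, coincides with $T \cap U$. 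The hypotheses of the proposition are all in place: (i) because $\phi$, being a topological weak equivalence, in particular satisfies PF1 and PF2; (ii) because it is inherited by $G \mathbin| U \tto U$ from $G \tto X$ (restrictions of proper maps to preimages of open sets remain proper, and the isotropy bundle and its local cross-sections are inherited); and (iii) because $K$ is compact by definition of orbispace groupoid.

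With this dictionary at hand, openness of $T$ follows easily: given $u \in T$, use local connectedness to shrink the orbispace chart around $u$ to one with connected domain $U$, and apply the proposition, which says $T \cap U = X_1$ contains the connected component of $u$ in $U$ — all of $U$ — so $U \subset T$. For closedness, given $u \in \overline T$, pick any orbispace chart around $u$ with domain $U$; the proposition asserts that $T \cap U$ is closed in $U$, and since $u$ is a limit point of $T$ lying in $U$, there is a net in $T \cap U$ converging to $u$, whence $u \in T \cap U \subset T$. Thus $T$ is a nonempty clopen subset of the connected space $X$, so $T = X$.

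The main obstacle, and the only nontrivial point, is the identification $X_1 = T \cap U$ together with the careful verification that all assumptions of Proposition \ref{prop:appendix} transfer to the restricted groupoid $G \mathbin| U \tto U$ and to the orbispace span at $u$; once this is settled, the remainder is a routine clopen-implies-all argument using connectedness.
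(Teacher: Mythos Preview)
Your approach is the same as the paper's: identify $X_1$ with $T\cap U$ on each orbispace chart, invoke Proposition~\ref{prop:appendix}, and conclude $T$ is clopen. The one technical wrinkle is the phrase ``shrink the orbispace chart around $u$ to one with connected domain $U$'': restricting a span $G\mathbin|U_0 \xfrom{\phi} H \xto{\psi} K\times Z$ to a connected open $V\subset U_0$ via $H\mathbin|\phi^{-1}(V)\to G\mathbin|V$ need not preserve PF2, since the restricted map $(G\mathbin|V)\ftimes{s}{\phi}\phi^{-1}(V)\to V$ can fail to be surjective when $\phi(Y)\cap V$ misses some $G\mathbin|V$-orbit. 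The paper sidesteps this by applying the proposition to the \emph{full} chart domain $U_0$ and then observing that any connected open $V\subset U_0$ lies in the connected component (in $U_0$) of each of its points, hence entirely within $X_1=T\cap U_0$ or entirely within its complement; since such $V$ form a basis, both $T$ and its complement are open. Your closedness argument already applies the proposition on the full chart and is fine as written; once you make the same adjustment in the openness argument, everything goes through.
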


\begin{proof} Connected open subsets $V$ of domains $U$ of weak equivalence spans $G \mathbin| U \xfrom{\phi} H \xto{\psi} K \times Z$ form a basis for the topology of $X$. By our proposition, every such $V$ must lie entirely either within the set of all $u$ for which $G(u) = \{1u\}$ or else within its complement. Our corollary now follows immediately from the connectedness of $X$. \end{proof}

\begin{exm}\label{exm:appendix*} The nontrivial bundles of compact Lie groups $G \to \R$ of Example \ref{exm:appendix} cannot be orbispace groupoids. Our proposition actually tells us that they cannot be “orbispaces” in any reasonable sense which would enable us by means of the theory of section \ref{sec:images} to carry over to them those representations that, in virtue of the above-mentioned classical theorem \cite[p.~18, Thm.~4.2]{Bre72}, exist on the transformation groupoids $K \times Z \tto Z$. Of course, this doesn't go against our saying in conformity with our local extension theorem that the local extension problem can be solved for all such (in fact, all proper) groupoids by other means, including directly, as is evidently the case for the simplest among our $G \to \R$. \end{exm}

{\small

}%
\end{document}